\newlength{\itemLM} \setlength{\itemLM}{2.2em}
\setlist{leftmargin=\itemLM}
\setlist[description]{ \centering\bfseries }
\newcommand{\looseitem}[1]{\makebox[\itemLM]{#1}}
\newcommand{\spar}{\par\smallskip} 
\newcommand{\mpar}{\par\medskip}
\newcommand{\bpar}{\par\bigskip}
\numberwithin{equation}{section}
\numberwithin{equationX}{section}
\theoremstyle{definition}
\newaliascnt{definition}{equationX} \newtheorem{defn}[definition]{Definition} 
\newaliascnt{remark}{equationX} \newtheorem{rem}[remark]{Remark} 
\newaliascnt{example}{equationX}  
\theoremstyle{plain}
\newaliascnt{theorem}{equationX} \newtheorem{thm}[theorem]{Theorem}
\newaliascnt{lemma}{equationX}  \newtheorem{lem}[lemma]{Lemma} 
\newaliascnt{proposition}{equationX} 
\newaliascnt{corollary}{equationX} \newtheorem{cor}[corollary]{Corollary}
\let\expandafter\oldproof\csname\string\proof\endcsname \let\oldendproof\endproof
\renewenvironment{proof}{\oldproof[\textit Proof:]}{\oldendproof}
\newenvironment{proofX}[1]{\oldproof[\textit Proof\ #1:]}{\oldendproof}
\newcommand{\textSkoof}{Sketch of the proof} 
\newcommand{\thereqed}[1]{~\hfill\raisebox{-#1}[0mm][0mm]{\qedhere}}
\newcommand{\newterm}{\textit}
\newcommand{\nbd}{\nobreakdash-}
       \newcommand{\cO} {\mathcal{O}}   
\newcommand{\am}{\mathfrak{m}}   
\newcommand{\sC}{\mathscr{C}}   \newcommand{\sE}{\mathscr{E}} \newcommand{\sF}{\mathscr{F}} \newcommand{\sG}{\mathscr{G}}   \newcommand{\sI}{\mathscr{I}} \newcommand{\sK}{\mathscr{K}} \newcommand{\sL}{\mathscr{L}}    \newcommand{\sS}{\mathscr{S}} \newcommand{\sT}{\mathscr{T}}  
\newcommand{\sJ}{\mathscr{J}{\!}}
\DeclareMathOperator{\sKer}{\mathscr{K}\text{\kern -4pt {\calligra\large er}}\,} \DeclareMathOperator{\sIm}{\mathscr{I}\text{\kern -5pt {\calligra\large m}}\,} \DeclareMathOperator{\sHom}{\mathscr{H}\text{\kern -4pt {\calligra\large om}}\,} \DeclareMathOperator{\sExt}{\mathscr{E}\text{\kern -3pt {\calligra xt}}\,} \DeclareMathOperator{\sCoker}{\mathscr{C}\text{\kern -3pt {\calligra oker}}\,}
  \newcommand{\CC}{\mathbb{C}} \newcommand{\CP}{\mathbb{CP}}      
 \newcommand{\codim}{\mathrm{codim}}      \newcommand{\Hom}{\mathrm{Hom}} 
  \newcommand{\pr}{\mathrm{pr}}  \newcommand{\reg}{\mathrm{reg}}   \newcommand{\sing}{\mathrm{sing}}   
       \DeclareMathOperator{\PC}{PC}     \DeclareMathOperator{\red}{red}  \DeclareMathOperator{\Sing}{Sing}      \DeclareMathOperator{\Specan}{Specan}
\DeclareMathOperator{\supp}{supp}  
\DeclareMathOperator{\rk}{rk}  \DeclareMathOperator{\rank}{rk} \DeclareMathOperator{\corank}{cork} \DeclareMathOperator{\cork}{cork}
\newcommand{\cf}{cf.\ }
\newcommand{\eg}{e.\,g.\ }  
\newcommand{\ie}{i.\,e., } \newcommand{\Ie}{I.\,e., }
\newcommand{\aus}{\subset}    \newcommand{\minus}{{\setminus}}   \newcommand{\stimes}{{\times}}  \newcommand{\tensor}{\otimes}\newcommand{\stensor}{{\otimes}} 
\newcommand{\abb}{\rightarrow} \newcommand{\auf}{\mapsto} \newcommand{\iso}{\cong}
\newcommand{\nach}{\circ} 
\newcommand{\kl}{\leq} \newcommand{\gr}{\geq} \newcommand{\ungl}{\neq}
\newcommand{\ph}{\varphi}
\newcommand{\Thm}{Thm.~}
\newcommand{\Lem}{Lem.~}
\newcommand{\Prop}{Prop.~}
\newcommand{\Sec}{Sect.~}
\newcommand{\Cor}{Cor.~}
\newcommand{\Chap}{Chap.~}
\renewcommand{\tilde}{\widetilde}\renewcommand{\hat}{\widehat}
\newlength{\FErgLaenge}
\newcommand{\FErgT}[1]{\settowidth{\FErgLaenge}{#1} \hspace{5mm}\hbox{#1}\hspace{-\FErgLaenge}}
\newcommand{\spimply}[2]{(#1)\,$\Rightarrow$\,(#2)}
\def\section{\@startsection{section}{1}%
  \z@{2\linespacing\@plus1.5\linespacing}{\linespacing\@plus .5\linespacing }
  {\normalfont\bf\centering}}
\title[Modifications of torsion-free sheaves]{Modifications of torsion-free\\ coherent analytic sheaves}
\author[J. Ruppenthal and M. Sera]{Jean Ruppenthal and Martin Sera}
\address{Department of Mathematics and Informatics, University of Wuppertal, Gau{\ss}\-str.~20, 42119 Wuppertal, Germany.}
\email{\href{mailto:ruppenthal@uni-wuppertal.de}{ruppenthal@uni-wuppertal.de}, \href{mailto:sera@math.uni-wuppertal.de}{sera@math.uni-wuppertal.de}}
\date{\today}
\subjclass[2010]{32C35, 32S45, 32L05, 32L10}
\begin{document}

\begin{abstract}
We study the transformation of torsion-free coherent analytic \linebreak
sheaves under proper modifications.
More precisely, we study direct images of inverse image sheaves, and torsion-free preimages of direct image sheaves.
Under some conditions, it is shown that torsion-free coherent sheaves can be realized as the direct image of locally free sheaves under modifications. 
Thus, it is possible to study coherent sheaves modulo torsion by reducing the problem to study vector bundles on manifolds.
We apply this to reduced ideal sheaves and to the Grauert-Riemenschneider canonical sheaf of holomorphic $n$-forms.
\end{abstract}

\maketitle

\section{Introduction}

In bimeromorphic geometry, the use of locally free coherent analytic sheaves is limited:
The direct image of a locally free sheaf under a proper modification is not locally free any more.
Instead, it is reasonable to consider the wider category of torsion-free coherent analytic sheaves.
The restriction to torsion-free sheaves makes sense for bimeromorphic considerations as the torsion
of a coherent analytic sheaf is supported on analytically thin subsets.
To exemplify the use of torsion-free sheaves, just recall that an irreducible\footnote{We call a complex space \newterm{locally irreducible} if each stalk of its structure sheaf is an integral domain (see \eg \S 1.5 in \cite[\Chap 1]{GrauertRemmertCAS}). In particular, it is then automatically reduced. On the other hand, we say that a complex space $X$ is \newterm{(globally) irreducible} if the underlying reduced space, $\red(X)$,
consists of just one irreducible component. If there are more than one components,
then $X$ is called \newterm{reducible}.}
reduced compact space $X$ is a Moishezon
space if and only if it carries a positive torsion-free coherent analytic sheaf $\sS$ with $\supp(\sS)=X$
(see \eg \Thm 6.14 in \cite{Peternell94}).

\medskip
Let $\pi\colon Y \rightarrow X$ be a proper modification of a complex space $X$ (cf. \autoref{sec:prelims} for notations).
Then the direct image $\pi_* \sF$ of a torsion-free coherent analytic sheaf $\sF$ remains torsion-free.
The problem is here that the analytic inverse image sheaf $\pi^* \sS$ of a torsion-free coherent analytic sheaf $\sS$
is not torsion-free in general. For a counter-example, see \eg the example in \cite[\Sec 1]{GrauertRiemenschneider70},
\ie the pullback of the maximal ideal sheaf of the origin in $\CC^2$ under blow-up of the origin is not torsion-free.
One can say more or less that $\pi^* \sS$ is torsion-free in a point $y\in Y$ if and only if $\sS$ is locally free in $\pi(y)$
(see \cite{Rabinowitz79} or \autoref{rem:Rabinowitz} below).

\medskip
To take care of the torsion which arises when taking analytic inverse images,
it is useful to consider the torsion-free preimage sheaf:

\begin{defn}
Let $\pi\colon Y\rightarrow X$ be a holomorphic map between complex spaces such that $Y$ is locally irreducible.
Let $\sS$ be a coherent analytic sheaf on $X$. Then
\[\pi^T \sS := \pi^* \sS / \sT(\pi^*\sS),\]
where $\sT(\pi^*\sS)$ is the torsion sheaf of $\pi^*\sS$, is called the \newterm{torsion-free preimage sheaf} of $\sS$
under $\pi$.
\end{defn}

Torsion-free preimages under proper modifications have been first studied by H.~Rossi \cite{Rossi68},
H.~Grauert and O.~Riemenschneider \cite{GrauertRiemenschneider70, Riemenschneider71}.
The main motivation is as follows: Let $\sS$ be a torsion-free coherent analytic sheaf on an irreducible
complex space $X$. Then Rossi showed that there exists a proper modification $\ph_\sS\colon Y \rightarrow X$
such that $\ph_\sS^T \sS$ is locally free (see \autoref{sec:prelims} for the details).
Combining this with a resolution of singularities $\sigma\colon M\rightarrow Y$ which exists due to Hironaka,
we obtain a resolution of singularities $\pi=\ph_\sS\circ\sigma\colon M\rightarrow X$ such that $\pi^T \sS$
is locally free. Thus, it is possible to study coherent analytic sheaves modulo torsion by reducing the problem
to study vector bundles on manifolds.

\medskip
In view of this idea, it seems very interesting to study the connection between $\sS$ and its torsion-free
preimage $\pi^T \sS$ closer, and we have found the following relation which to our knowledge has not been observed in the literature:

\begin{thm}\label{thm:mainA} 
Let $\pi\colon Y \rightarrow X$ be a proper modification of a complex space $X$,
and let $\sF$ and $\sG$ be torsion-free coherent analytic sheaves on $X$ and $Y$, respectively.
\begin{itemize}
\item[(i)] If $\sF=\pi_\ast \sG$, then
	\[\sF \iso \pi_*\pi^T \sF.\]
\item[(ii)] If $\sG = \pi^T\sF$, then
	\[\pi^T\pi_\ast \sG\iso \sG.\]
\end{itemize}
\end{thm}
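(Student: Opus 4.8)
\begin{skoof}
The plan is to prove both parts by the same mechanism. Each desired isomorphism has a \emph{canonical} candidate morphism built from the adjunction $(\pi^\ast,\pi_\ast)$ together with the universal torsion-free quotient $\tau_\sS\colon\pi^\ast\sS\abb\pi^T\sS$ (every morphism from $\pi^\ast\sS$ to a torsion-free sheaf factors uniquely through $\tau_\sS$). Under the respective hypothesis I would produce a one-sided inverse of this candidate using the triangle identities of the adjunction. Since $\pi$ is biholomorphic over $X\minus A$ (with $A\aus X$ the nowhere dense center of the modification and $E=\pi^{-1}(A)\aus Y$ its preimage) and the sheaves involved are already torsion-free there, the candidate is an isomorphism over $X\minus A$, resp.\ over $Y\minus E$; hence its kernel (resp.\ cokernel) is a coherent subsheaf of a torsion-free coherent sheaf whose support is a nowhere dense analytic set, so it is a torsion subsheaf and therefore vanishes. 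This upgrades the one-sided inverse to a genuine isomorphism.

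For (i), write $\eta\colon\Id\abb\pi_\ast\pi^\ast$ and $\epsilon\colon\pi^\ast\pi_\ast\abb\Id$ for unit and counit. The candidate is $a:=\pi_\ast(\tau_\sF)\nach\eta_\sF\colon\sF\abb\pi_\ast\pi^T\sF$. Because $\sG$ is torsion-free, the counit $\epsilon_\sG\colon\pi^\ast\pi_\ast\sG=\pi^\ast\sF\abb\sG$ (here the hypothesis $\sF=\pi_\ast\sG$ enters) factors as $\epsilon_\sG=b\nach\tau_\sF$ with $b\colon\pi^T\sF\abb\sG$. Applying $\pi_\ast$ and using the triangle identity $\pi_\ast(\epsilon_\sG)\nach\eta_{\pi_\ast\sG}=\Id$ gives $\pi_\ast(b)\nach a=\Id_\sF$. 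Now $\pi_\ast\pi^T\sF$ is torsion-free (direct image under the proper map $\pi$ of the torsion-free sheaf $\pi^T\sF$), and $\pi_\ast(b)$ is an isomorphism over $X\minus A$ since there $\pi$ is biholomorphic and $\sF$ is already torsion-free; so $\ker\pi_\ast(b)$ is a coherent subsheaf of a torsion-free sheaf supported in $A$, hence zero. Thus $\pi_\ast(b)$ is injective, hence an isomorphism, and so is $a$.

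For (ii), torsion-freeness of $\sG$ gives the candidate $c\colon\pi^T\pi_\ast\sG\abb\sG$ factoring the counit, $\epsilon_\sG=c\nach\tau_{\pi_\ast\sG}$. To build a section of $c$ I would use the hypothesis $\sG=\pi^T\sF$, which makes $\tau_\sF\colon\pi^\ast\sF\abb\sG$ an epimorphism. With $a$ as in (i), naturality of $\epsilon$ and the triangle identity $\epsilon_{\pi^\ast\sF}\nach\pi^\ast(\eta_\sF)=\Id$ yield $\epsilon_\sG\nach\pi^\ast(a)=\tau_\sF$. Post-composing $\pi^\ast(a)$ with $\tau_{\pi_\ast\sG}$ and factoring through $\tau_\sF$ (the target $\pi^T\pi_\ast\sG$ being torsion-free) produces $d\colon\sG\abb\pi^T\pi_\ast\sG$ with $\tau_{\pi_\ast\sG}\nach\pi^\ast(a)=d\nach\tau_\sF$; then $c\nach d\nach\tau_\sF=\epsilon_\sG\nach\pi^\ast(a)=\tau_\sF$, and since $\tau_\sF$ is epic, $c\nach d=\Id_\sG$. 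As $\pi^T\pi_\ast\sG$ is torsion-free (quotient of $\pi^\ast\pi_\ast\sG$ by its torsion) and $c$ is an isomorphism over $Y\minus E$, the sheaf $\ker c$ is a torsion subsheaf of a torsion-free sheaf, hence zero, so $c$ is an isomorphism.

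The routine inputs are Grauert's coherence theorem for proper direct images, the preservation of torsion-freeness under proper direct images recalled in the introduction, and the standard fact that a coherent subsheaf of a torsion-free coherent sheaf with nowhere dense support is a torsion subsheaf and hence zero. The only point needing real care — the main obstacle — is the bookkeeping: one must identify the ``obvious'' comparison maps $a$ and $c$ with the composites coming out of the adjunction precisely enough that the triangle identities make $\pi_\ast(b)\nach a$ and $c\nach d$ collapse to identities. Once the diagrams are arranged correctly, both statements reduce to the vanishing of a torsion subsheaf of a torsion-free sheaf.
\end{skoof}
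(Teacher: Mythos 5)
Your proposal is correct and follows essentially the same route as the paper: the paper combines the injectivity of the natural maps $\sF\abb\pi_*\pi^T\sF$ (Theorem~\ref{thm:Injection1+MainNormal}\,(i)) and $\pi^T\pi_*\sG\abb\sG$ (Lemma~\ref{lem:Injection2}) -- both proved by exactly your thin-support/torsion-freeness vanishing argument -- with left-exactness of $\pi_*$ for (i), resp.\ the fact that $\pi^T$ preserves monomorphisms (Lemma~\ref{lem:quasiexactness}) for (ii). Your only genuine addition is the explicit check, via the adjunction triangle identities, that the composites $\pi_*(b)\nach a$ and $c\nach d$ are the identity, a point the paper leaves implicit when it concludes the isomorphisms from its two opposite natural injections.
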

\spar
\autoref{thm:mainA} can be shown directly by standard facts on modifications and torsion-free sheaves.
But we obtain it here as a simple byproduct of considerations on the functor $\pi^T$  (see \autoref{lem:quasiexactness}) and the fact that the natural maps 
\begin{eqnarray*}
\sF &\abb& \pi_\ast\pi^T \sF,\\
\pi^T\pi_\ast\sG &\abb& \sG
\end{eqnarray*}
both are injective (\autoref{thm:Injection1+MainNormal} (i) and \autoref{lem:Injection2}; 
the proof of \autoref{lem:Injection2} presented here is due to Matei Toma).
We give also counter-examples to show
that these injections are not bijective in general (\autoref{rem:CE1} and \autoref{rem:CE2}).

\medskip
In order to understand relations as in \autoref{thm:mainA} better,
it turns out useful to study properties of linear spaces associated to coherent analytic sheaves.
Among other things, we obtain the following equivalence:
\begin{thm}\label{thm:MainIrred}
Let $X$ be a connected factorial Cohen-Macaulay space and $\sS$ a coherent analytic sheaf on $X$ generated by $\rank\sS+m$ sections, $m\,{\kl}\,2$, 
such that the singular locus of $\sS$ is at least of codimension $m{+}1$ in $X$.
Then the following conditions are equivalent:
\begin{itemize}
\item[(1)] $\sS$ is torsion-free.
\item[(2)] The linear (fiber) space $L(\sS)$ associated to $\sS$ is globally irreducible and reduced
(\ie it consists only of its primary component).
\item[(3)] $L(\sS)$ is locally irreducible.%
\item[(4)] For all $p\in X$, there is a neighborhood $U\aus X$ such that
	\[0 \abb \cO_U^m \abb \cO_U^{\rank \sS+m}\abb\sS_U  \abb 0\]
is exact, \ie the homological dimension of $\sS$ is at most one.
\end{itemize}
\end{thm}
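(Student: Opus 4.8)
The plan is to prove the four conditions equivalent by a cycle, say \spimply{2}{1}, \spimply{1}{4}, \spimply{4}{3}, \spimply{3}{2}, with the second implication carrying nearly all of the difficulty. The backbone is the dictionary between coherent sheaves and linear spaces. Recall that the degree-one part of the algebra defining $L(\sS)$ identifies $\sS$ canonically with the sheaf of fibre-linear holomorphic functions on $L(\sS)$, and that over $X\minus\Sing\sS$, where $\sS$ is locally free of rank $r:=\rank\sS$, the space $L(\sS)$ is a rank-$r$ vector bundle whose closure is the primary component $L_0(\sS)$ --- an irreducible space of dimension $\dim X+r$ dominating the irreducible space $X$. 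Restriction of fibre-linear functions from $L(\sS)$ to $L_0(\sS)$ is surjective; its kernel is supported on $\Sing\sS$, hence torsion, while its target is torsion-free since $L_0(\sS)$ is irreducible and dominates $X$, so this kernel is precisely $\sT(\sS)$. This already gives \spimply{2}{1}: if $L(\sS)=L_0(\sS)$, then $\sS$ equals the fibre-linear functions on $L_0(\sS)$ alone, which are torsion-free.

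Two of the remaining implications are comparatively soft. For \spimply{4}{3}: if the homological dimension of $\sS$ is at most one, then locally $0\abb\cO_U^m\abb\cO_U^{r+m}\abb\sS_U\abb0$ with some matrix $A$, so $L(\sS)$ is cut out near each point by $A(q)^{t}T=0$, its degeneracy locus $\{\rk A<m\}$ being $\Sing\sS$, of codimension $\gr m+1$; factoriality of $X$ then lets one verify that the ideal of this set is prime at every point, the coprimality of the relevant minors of $A$ being forced by the codimension bound, so $L(\sS)$ is locally irreducible. For \spimply{3}{2}: $L(\sS)$ is connected --- it contains the zero section, a copy of the connected space $X$, which meets every irreducible component --- so locally irreducible forces irreducible; being irreducible of dimension $\dim X+r$ and containing the dense bundle over $X\minus\Sing\sS$, it must then coincide with $L_0(\sS)$. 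Finally \spimply{4}{1}, which closes the cycle, is a depth count: from a local resolution $0\abb\cO_U^m\abb\cO_U^{r+m}\abb\sS_U\abb0$ and the Cohen--Macaulay property of $X$ one gets $\operatorname{depth}\sS_q\gr\dim\cO_{X,q}-1$, so $\sS$ has positive depth wherever $\dim\cO_{X,q}\gr2$; at codimension-one points $\sS$ is locally free because $\codim\Sing\sS\gr2$; hence no prime of positive height is associated to $\sS$, i.e.\ $\sS$ is torsion-free.

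The crux is \spimply{1}{4}. Fixing $p\in X$ and shrinking, I would choose an epimorphism $\cO_U^{r+m}\abb\sS_U$ and let $\sK$ be its kernel: a coherent subsheaf of a free sheaf, hence torsion-free of rank $m$, and in fact reflexive --- because $\sS$ is torsion-free, sections of $\sK$ extend across analytic subsets of codimension $\gr2$ just as those of $\cO_U^{r+m}$ do, so $\sK$ satisfies $S_2$. If $m=0$ there is nothing to prove; if $m=1$, then $\sK$ is a rank-one reflexive sheaf on the factorial space $X$, hence invertible, and adding a trivial summand to the resolution produces the sequence in (4). The hard case is $m=2$: one must show that the rank-two reflexive sheaf $\sK$, locally free off the codimension-$\gr3$ set $\Sing\sS$ and with torsion-free cokernel in $\cO_U^{r+2}$, is locally free. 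Here I would use factoriality to trivialise $\det\sK$ and to identify $\sK$ with $\sK^{*}\tensor\det\sK$ (both reflexive of rank two and agreeing off a set of codimension $\gr3$), reducing the obstruction to the vanishing near $p$ of a single $\mathcal{E}xt^{1}(\sS,\cO)$; that vanishing should then be extracted from the irreducibility of $L(\sS)$ --- which, by the cycle, is equivalent to (1) --- via a local-cohomology estimate using $\codim\Sing\sS\gr3$ and the Cohen--Macaulay hypothesis. Once $\sK$ is locally free of rank $m$ it is free, and padding the generating system gives (4).

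The main obstacle is exactly this last step, upgrading reflexivity to local freeness of the rank-two syzygy sheaf. On a factorial space every rank-one reflexive sheaf is automatically a line bundle, but rank-two reflexive sheaves need not be locally free once $\dim X\gr3$, so the argument cannot be purely homological: it must combine the generation bound $m\kl2$ (which caps the syzygy rank at two), the codimension estimate $\codim\Sing\sS\gr m+1$, and the Cohen--Macaulay property of $X$, most naturally by transporting the question to the linear space $L(\sS)$ and carrying out the dimension and depth count there. Everything else in the cycle is bookkeeping with these standard tools.
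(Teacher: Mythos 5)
Your cycle \spimply21, \spimply14, \spimply43, \spimply32 (plus \spimply41) is close in spirit to the paper's, and several pieces are fine: \spimply21 and \spimply32 are essentially the paper's \autoref{lem:TorsionImpliesRed} and its connectedness argument, your depth count for \spimply41 is sound, and for $m\kl 1$ your reduction of \spimply14 to ``the rank-one syzygy sheaf is reflexive, hence invertible on a factorial space'' is a legitimate and arguably cleaner route than the paper's (which works with the fiberwise linear defining equations, \autoref{thm:IrredPrincipalIdeal} and \autoref{thm:IrredCorank1}). But two steps are not proofs. Your \spimply43 rests on the assertion that factoriality plus $\codim\Sing\sS\gr m+1$ forces the ideal generated by the $m$ fiberwise linear forms to be prime \emph{at every point}; for $m=2$ this ``coprimality of minors'' is exactly the hard content and is nowhere carried out (the paper never proves \spimply43 directly: it proves \spimply42 by the dimension count that an extra component would sit over $\Sing\sS$ and force $\codim\Sing\sS\kl m$, and reaches (3) only through (1)). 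More seriously, your crux \spimply14 for $m=2$ is left open: the needed vanishing of $\sExt^1(\sS,\cO_X)$ ``should then be extracted'' is a hope, not an argument, and the source you propose for it --- the irreducibility of $L(\sS)$, ``which, by the cycle, is equivalent to (1)'' --- is circular, because that equivalence is available only after \spimply14 itself has been established.

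This last gap cannot be closed as stated. Take $X=\CC^3$ and $\sS=\sJ_{\{0\}}=(x,y,z)$: it is torsion-free of rank $1$, generated by $3=\rank\sS+2$ sections, and $\Sing\sS=\{0\}$ has codimension $3=m+1$, so all hypotheses with $m=2$ hold; moreover $L(\sS)\aus\CC^3\stimes\CC^3$ is cut out by the Koszul relations $xv_2-yv_1,\ xv_3-zv_1,\ yv_3-zv_2$, i.e., it is the generic determinantal variety of $2{\times}2$ minors of a $2{\times}3$ matrix, which is irreducible and normal --- so (1), (2), (3) all hold. Yet the syzygy sheaf of $(x,y,z)$ is precisely the non-free rank-two reflexive second syzygy of the residue field (the standard example you yourself invoke), and the homological dimension of $\sS$ at $0$ is $2$, so (4) fails: no sequence $0\abb\cO_U^2\abb\cO_U^3\abb\sS_U\abb0$ can be exact near the origin. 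Hence with $\codim\Sing\sS$ equal to $m+1$ the implication \spimply14 is false, and no local-cohomology estimate will produce the missing $\sExt^1$-vanishing; one needs a stronger codimension hypothesis (compare \autoref{thm:NormalLS}) or a different formulation. (The same example shows that the step ``$E=S_{12}$ over $U'$'' in the proof of \autoref{thm:IrredCorank2} is not justified: there $S_{12}=\{xv_2-yv_1=xv_3-zv_1=0\}$ has an extra top-dimensional component over the hypersurface $\{x=0\}$, so the inclusion $E\aus S_{12}$ together with equality of dimensions does not yield equality.) So your strategy is viable for $m\kl1$, but the $m=2$ case of your plan --- and of the statement in this form --- has to be repaired before the cycle closes.
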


R. Axelsson and J. Magn\'usson proved in \cite[\Prop 3.11]{Axelsson-Magnusson98} that a condition on the codimension of the singular locus of $\sS$ is necessary to obtain the irreducibility of $L(\sS)$. In this context,
we can highly recommend their papers \cite{Axelsson-Magnusson86} and \cite{Axelsson-Magnusson98},
which deal with complex analytic cones, a generalization of complex linear spaces.

\spar
If, moreover, the singular locus of $\sS$ is at least $(m{+}2)$-codimensional, then $L(\sS)$ is normal (see \autoref{thm:NormalLS}). This is a particularly interesting situation because of the following statement.

\begin{thm}\label{thm:MainNormal} 
Let $X$ be a locally irreducible complex space, $\sS$ a torsion-free coherent analytic sheaf on $X$ such that the linear space associated to $\sS$ is normal,
and $\pi\colon Y\abb X$ a proper modification of $X$. Then the canonical homomorphism $\sS \rightarrow \pi_*(\pi^T \sS)$ is bijective, \ie
	\[\sS \iso \pi_\ast (\pi^T \sS).\]
\end{thm}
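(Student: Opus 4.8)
The strategy is to reduce the global statement to a local one and then exploit normality of $L(\sS)$ together with the universal property relating $\pi^T\sS$ to the linear space. First I would recall (from the discussion around \autoref{thm:Injection1+MainNormal}) that the canonical homomorphism $\alpha\colon\sS\abb\pi_*(\pi^T\sS)$ is always injective when $\sS$ is torsion-free; so the only thing to prove is surjectivity, and surjectivity can be checked stalkwise. Thus fix $p\in X$, and work on a Stein neighborhood $U$ of $p$ over which everything is available. Since $L(\sS)$ is normal, the crucial point is that sections of $\sS$ over $U$ correspond to linear-along-the-fibers holomorphic functions on $L(\sS)|_U$, and pulling back to $L(\pi^T\sS)$ — which is an analytic subset of $\pi^*L(\sS) = L(\sS)\times_X Y$ — gives the comparison we want.

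Concretely, the key steps are: (1) Identify $\pi_*(\pi^T\sS)$ over $U$ with the module of fiberwise-linear holomorphic functions on the linear space $L(\pi^T\sS)$ lying over $\pi^{-1}(U)$; use that $\pi^T\sS$ is torsion-free so that $L(\pi^T\sS)$ is the primary (reduced-in-the-fiber-direction) component of $\pi^*L(\sS)$. (2) Observe that the natural map $L(\pi^T\sS)\abb L(\sS)$ induced by $\pi$ is itself a proper modification of $L(\sS)$: it is proper as a base change of the proper map $\pi$, and it is biholomorphic over the open dense set where $\pi$ is an isomorphism (and where $\sS$, hence $L(\sS)$, is already locally free), so it is a modification of the analytic space $L(\sS)$ in the usual sense. (3) Invoke normality: a holomorphic function on $L(\pi^T\sS)$ (in particular a fiberwise-linear one, pulled back from $\pi^T\sS$) descends to a holomorphic function on $L(\sS)$ because $L(\sS)$ is normal and the modification map is proper with the exceptional set nowhere dense — this is the Riemann-type extension/descent property of normal spaces under proper modifications. (4) Check that the descended function is again fiberwise linear, hence corresponds to a section of $\sS$ over $U$, and that this section maps to the given element of $\pi_*(\pi^T\sS)_p$ under $\alpha$. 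This yields surjectivity of $\alpha_p$, and since $p$ was arbitrary, $\alpha$ is an isomorphism.

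The main obstacle I anticipate is step (3), making the descent of functions across the modification $L(\pi^T\sS)\abb L(\sS)$ precise. One has to be careful that $L(\sS)$ may be singular (it is normal but not smooth), so the relevant fact is not the classical Riemann extension theorem on manifolds but its generalization to normal complex spaces: a holomorphic function on a proper modification of a normal space, defined on the complement of a nowhere-dense analytic set, extends across that set — equivalently, $\mu_*\cO_{L(\pi^T\sS)} = \cO_{L(\sS)}$ for a proper modification $\mu$ of a normal space. Establishing that $\mu$ really is a modification (that $L(\pi^T\sS)$ is a bona fide modification of $L(\sS)$ and not something worse, e.g. that one has correctly passed to the torsion-free component and that this component surjects properly and generically isomorphically onto $L(\sS)$) requires comparing $\pi^*L(\sS)$ with $L(\pi^*\sS)$ and then with $L(\pi^T\sS)=L(\pi^*\sS/\sT(\pi^*\sS))$; the fiberwise-linear structure must be tracked throughout. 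Once the modification picture is set up correctly, the descent is a routine consequence of normality, and identifying descended linear functions with sections of $\sS$ is bookkeeping via the standard dictionary between coherent sheaves and their linear spaces. A secondary check is that the descended section indeed induces the original germ in $\pi_*(\pi^T\sS)_p$, which follows because $\alpha$ is compatible with the identifications on the dense open locus where $\pi$ is an isomorphism, and $\pi_*(\pi^T\sS)$ is torsion-free so a section is determined by its restriction to that locus.
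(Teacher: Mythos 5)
Your proposal is correct and is essentially the paper's own argument (\autoref{thm:Injection1+MainNormal}\,(iii)): injectivity comes from torsion-freeness, and surjectivity is obtained by descending fiberwise-linear holomorphic functions from the linear space of $\pi^T\sS$ along the proper, generically biholomorphic projection onto $L(\sS)$, where normality of $L(\sS)$ turns the resulting weakly holomorphic (locally bounded meromorphic) function into a holomorphic one, and fiberwise linearity is preserved. The one imprecision is in your steps (1)--(3): torsion-freeness of $\pi^T\sS$ does not force $L(\pi^T\sS)$ to be the primary component of $\pi^*L(\sS)$, nor need $L(\pi^T\sS)\abb L(\sS)$ itself be a modification, since the linear space of a torsion-free sheaf can still have extra components over the singular locus (\cf \autoref{rem:CEIrred}); the paper therefore restricts the projection to the primary component $\PC(L(\pi^T\sS))$, which does map onto $L(\sS)$ as a proper modification, and uses that sections of the torsion-free sheaf $\pi^T\sS$ are determined by their values on that component (\autoref{rem:CharTFWithPC}) --- with this adjustment your argument coincides with the paper's.
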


In this situation, $\sS$ can actually be represented as the direct image of a locally free sheaf.

\medskip
Let us mention two applications of \autoref{thm:mainA}. First, we will study ideal sheaves.
Let $\pi\colon Y \rightarrow X$ be a proper modification of a locally irreducible complex space $X$,
$A\subset X$ an analytic subset with ideal sheaf $\sJ_A$,
and $B:=\pi^{-1}(A)$ the analytic preimage with ideal sheaf $\sJ_B$. Then
$\pi^T \sJ_A = \sJ_B$ (\cf \autoref{lem:idealsheaves1}),
and \autoref{thm:mainA} yields that $\sJ_B \cong \pi^T \pi_* \sJ_B$.

If we assume moreover that $X$ is normal and that $A$ is either a locally complete intersection or a normal analytic set
and that $\sigma\colon Y \rightarrow X$ is the monoidal transformation with respect to $\sJ_A$,
then $\sJ_B=\sigma^T \sJ_A$ (is locally free) and we have (see \autoref{lem:idealsheaves}): 
\[\sJ_A \cong \sigma_* \sJ_B \cong \sigma_* \sigma^T \sJ_A.\]

\medskip
Second, let $X$ be a locally irreducible complex space of pure dimension $n$,
and $\sK_X$ the Grauert-Riemenschneider canonical sheaf on $X$ (as introduced in \cite{GrauertRiemenschneider70}).
Then there exists a resolution of singularities $\pi\colon M\rightarrow X$ (with only normal crossings)
such that $\pi^T\sK_X$ is locally free,
and so there is an effective divisor $D$ with support on the exceptional set of the modification such that
\begin{eqnarray}\label{eq:intapp1}
\sK_X \cong \pi_* \pi^T \sK_X = \pi_* \Omega^n_M(-D) = \pi_* \big( \Omega^n_M\otimes \cO(-D) \big)
\end{eqnarray}
(see \autoref{thm:nforms}). Let us explain briefly the meaning of \eqref{eq:intapp1}.
By definition of the Grauert-Riemenschneider canonical sheaf, we know already that $\sK_X \cong \pi_* \Omega^n_M$.
\eqref{eq:intapp1} tells us that we can as well consider the push-forward of holomorphic $n$-forms
which vanish to the order of $D$ on the exceptional set.
This is a useful information, particularly if $\pi$ is explicitly given so that $D$ can be calculated explicitly.
An example:
If $X$ is already a manifold (\ie $\sK_X=\Omega^n_X$) 
and $\pi\colon M \rightarrow X$ is the blow-up along a submanifold of codimension $s$ in $X$
with exceptional set $E$, then (see \eg \Prop VII.12.7 in \cite{DemaillyAG}):
\[\pi^T \sK_X = \pi^* \sK_X = \Omega^n_M\big(-(s-1)E\big),\] 
and so 
\[\Omega^n_X = \sK_X \cong \pi_* \Omega^n_M\big(-(s-1)E\big).\]
Considerations of this kind are particularly important in the study of canonical sheaves on singular complex spaces
(see \cite{Ruppenthal14}). We will set up the relation \eqref{eq:intapp1} also for holomorphic $n$-forms with values
in locally free coherent analytic sheaves (see \autoref{thm:app1}).

\medskip
Using \autoref{thm:MainNormal}, we are able to generalize Takegoshi's relative version \cite{Take85}
of the Grauert-Riemenschneider vanishing theorem in several directions. 
This is elaborated by the second author in \cite{Sera16}.

\medskip
The content of the present paper is organized as follows.
After a brief review of monoidal transformations with respect to coherent analytic sheaves in \autoref{sec:prelims},
we will study linear spaces associated to torsion-free coherent analytic sheaves in \autoref{sec:LinearSpaces}. There, we will prove  \autoref{thm:MainIrred}.
Then we study direct images of (torsion-free) analytic preimage sheaves (including the proof of \autoref{thm:MainNormal}) in \autoref{sec:Injection1},
and torsion-free analytic preimages of direct image sheaves in \autoref{sec:Injection2}.
In \autoref{sec:mainA}, we show that the analytic inverse image functor preserves monomorphisms and epimorphisms
and use this fact in combination with the previous considerations to prove \autoref{thm:mainA}.
\autoref{sec:application1} and \autoref{sec:hnforms} contain the applications described above.
We complement the paper by analogous considerations on the non-analytic inverse image functor in \autoref{sec:NAInverseImage}.

\bigskip
{\bf Acknowledgments.}
{The authors are grateful to Matei Toma and Daniel Greb for discovering a mistake in an earlier version of this paper
and for helpful discussions on the topic,
and to the unknown referee for several suggestions which helped to improve the paper.
This research was supported by the Deutsche Forschungsgemeinschaft (DFG, German Research Foundation), 
grant RU 1474/2 within DFG's Emmy Noether Programme.}

\section{Monoidal transformations}
\label{sec:prelims}

Let us recall some preliminaries on monoidal transformations of complex spaces with respect to coherent analytic sheaves.

\begin{defn}
A proper surjective holomorphic map $\ph\colon X \rightarrow Y$ of complex spaces $X$ and $Y$ is called a
{\it (proper) modification} if there are closed analytic sets $A\subset X$ and $B\subset Y$ such that
\begin{itemize}
\item[(1)] $B=\ph(A)$, 
\item[(2)] $\ph|_{X\setminus A}\colon X\setminus A \rightarrow Y\setminus B$ is biholomorphic,
\item[(3)] $A$ and $B$ are analytically rare, and
\item[(4)] $A$ and $B$ are minimal with the properties (1\,--\,3).
\end{itemize}

$A$ is called the \newterm{exceptional set of $\ph$} and $B$ the \newterm{center} of the modification.
\end{defn}

Rossi showed in \cite{Rossi68} that coherent analytic sheaves can be made locally free by use of modifications.
This process has been treated more systematically by Riemenschneider \cite{Riemenschneider71}.
Following \cite[\S\,2]{Riemenschneider71}, we define:

\begin{defn}
Let $X$ be a complex space and $\sS$ a coherent analytic sheaf on $X$.
Then a pair $(X_\sS,\varphi_\sS)$ of a complex space $X_\sS$ and a proper modification
$\varphi_\sS\colon X_\sS \rightarrow X$ is called the {\it monoidal transformation of $X$ with respect to $\sS$}
if the following two conditions are fulfilled:
\begin{itemize}
\item[(1)] the torsion-free preimage $\varphi_\sS^T \sS=\varphi_\sS^* \sS/ \sT(\varphi_\sS^*\sS)$ is locally free on $X_\sS$,
\item[(2)] if $\pi\colon Y \rightarrow X$ is any proper modification such that (1) holds then there is a unique
holomorphic mapping $\psi\colon Y \rightarrow X_\sS$ such that $\pi= \varphi_\sS\circ\psi$.
\end{itemize}
\end{defn}

So, if $X_\sS$ exists, it is uniquely determined up to biholomorphism by (2).
But existence was first proven by Rossi (see \Thm 3.5 in \cite{Rossi68})
and then studied further by Riemenschneider (see \Thm 2 in \cite{Riemenschneider71}):

\begin{thm}\label{thm:rossi}
Let $X$ be an (irreducible) complex space, $\sS$ a coherent analytic sheaf on $X$
and $A=\Sing\sS:=\{x\in X\colon \sS \mbox{ is not locally free at } x\}$ the singular locus of $\sS$. Then there exists the monoidal transformation $(X_\sS,\varphi_\sS)$
of $X$ with respect to $\sS$. $X_\sS$ is a reduced (irreducible) complex space and $\varphi_\sS$
is a projective proper modification such that
\[\varphi_\sS\colon X_\sS\setminus \varphi_\sS^{-1}(A) \rightarrow X \setminus A\]
is biholomorphic.
If $U\subset X$ is an open subset, then $(\varphi_\sS^{-1}(U),\varphi_\sS)$ is the monoidal transformation
of $U$ with respect to $\sS_U$.
\end{thm}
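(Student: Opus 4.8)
The plan is to dispose of uniqueness first (which is purely formal), then to construct $(X_\sS,\varphi_\sS)$ as the closure of a graph in a relative Grassmannian, and finally to verify the two defining properties. Uniqueness is immediate: if $(X_\sS,\varphi_\sS)$ and $(X'_\sS,\varphi'_\sS)$ both satisfy (1) and (2), feeding $\varphi'_\sS$ into the universal property of the first and $\varphi_\sS$ into that of the second produces holomorphic maps over $X$ in both directions whose two compositions are, by the uniqueness clause in (2), the identities. Hence $X_\sS$ is unique up to a unique $X$-isomorphism. A pleasant consequence is that anything I build which satisfies (1) and (2) is automatically canonical, so that local constructions glue to a global $X_\sS$ and the statement about open subsets $U\subset X$ drops out at the end.

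For the construction I work locally and choose a finite generating system $\cO_X^N\twoheadrightarrow\sS$; put $r:=\rank\sS$. Over $X\setminus A$ the sheaf $\sS$ is locally free of rank $r$, so $\cO^N\to\sS$ presents $\sS|_{X\setminus A}$ as a rank-$r$ locally free quotient of $\cO^N$, \ie it defines a holomorphic ``Gauss map'' $g\colon X\setminus A\to\mathrm{Gr}(r,N)$ into the Grassmannian of $r$-dimensional quotients of $\CC^N$. I let $X_\sS$ be the analytic closure of the graph of $g$ inside $X\times\mathrm{Gr}(r,N)$, taken with its reduced structure, and I let $\varphi_\sS$ be the first projection. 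Since $\mathrm{Gr}(r,N)$ is compact and projective, $\varphi_\sS$ is a projective proper surjective map; over $X\setminus A$ it is an isomorphism because there $X_\sS$ is just the graph; and since the graph is dense in $X_\sS$ while $X\setminus A$ is dense in $X$, this exhibits $\varphi_\sS$ as a projective modification which is biholomorphic over $X\setminus A$. Reducedness is built in, and if $X$ is irreducible then $X\setminus A$, hence its graph, hence its closure $X_\sS$ is irreducible.

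To see that $\varphi_\sS^T\sS$ is locally free I use the tautological rank-$r$ locally free quotient $\mathcal Q$ of $\cO_{X_\sS}^N$ pulled back from $\mathrm{Gr}(r,N)$. Over $X\setminus A$ the surjections $\cO^N\to\varphi_\sS^*\sS$ and $\cO^N\to\mathcal Q$ coincide, so, $X_\sS$ being reduced and $\mathcal Q$ locally free, any local section of $\ker(\cO^N\to\varphi_\sS^*\sS)$ is killed by $\cO^N\to\mathcal Q$; this gives a surjection $q\colon\varphi_\sS^*\sS\twoheadrightarrow\mathcal Q$ which is an isomorphism over $X\setminus A$. Thus $\ker q$ is supported on the exceptional set $\varphi_\sS^{-1}(A)$, so $\ker q\subset\sT(\varphi_\sS^*\sS)$; conversely $q$ kills the torsion subsheaf since $\mathcal Q$ is locally free; hence $\ker q=\sT(\varphi_\sS^*\sS)$ and $\varphi_\sS^T\sS\iso\mathcal Q$ is locally free. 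This step is where one needs $X_\sS$ to be locally irreducible, so that $\varphi_\sS^T$ is even defined and the torsion subsheaf behaves as expected; if necessary one composes with a normalisation, as in Riemenschneider's treatment.

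It remains to check the universal property. Given any proper modification $\pi\colon Y\to X$ with $\pi^T\sS$ locally free, right-exactness of $\pi^*$ yields an epimorphism $\cO_Y^N=\pi^*\cO_X^N\to\pi^*\sS\to\pi^T\sS$ onto a rank-$r$ locally free sheaf, hence a holomorphic map $f\colon Y\to\mathrm{Gr}(r,N)$ and thus $(\pi,f)\colon Y\to X\times\mathrm{Gr}(r,N)$. Over $X\setminus A$ we have $f=g\circ\pi$, so $(\pi,f)$ maps the dense open set $Y\setminus\pi^{-1}(A)$ into the graph of $g$, and by continuity its whole image lies in $X_\sS$; this gives $\psi:=(\pi,f)\colon Y\to X_\sS$ with $\varphi_\sS\circ\psi=\pi$, and any two such factorisations agree over $X\setminus A$, hence everywhere. \textbf{The main obstacle} is exactly this last part together with the technical bookkeeping around it: the point that makes the comparison map $f$ extend holomorphically across $\pi^{-1}(A)$ is that one works with the \emph{locally free} sheaf $\pi^T\sS$ rather than with $\pi^*\sS$, so that $\cO_Y^N\to\pi^T\sS$ really is a morphism to the Grassmannian everywhere; and one must still verify carefully that $X_\sS$ is a modification of the asserted type — reduced, irreducible when $X$ is, locally irreducible so that $\varphi^T_\sS\sS$ makes sense, with $\varphi_\sS^T\sS\iso\mathcal Q$ — which involves the behaviour of torsion under the non-flat pullback $\varphi_\sS^*$. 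For a complete treatment I would follow Rossi \cite{Rossi68} and Riemenschneider \cite{Riemenschneider71}.
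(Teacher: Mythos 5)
The paper itself gives no proof of this statement: it is quoted verbatim from Rossi (Thm.~3.5 in \cite{Rossi68}) and Riemenschneider (Thm.~2 in \cite{Riemenschneider71}), and your sketch follows exactly the classical route of those sources -- uniqueness from the universal property, construction of $X_\sS$ as the closure of the graph of the Gauss map in $X\times\mathrm{Gr}(r,N)$, identification of $\varphi_\sS^T\sS$ with the tautological quotient $\mathcal{Q}$, and the universal property via the classifying map of the locally free quotient $\pi^T\sS$. Your arguments that $\varphi_\sS^T\sS\iso\mathcal{Q}$ and that $\psi=(\pi,f)$ factors through $X_\sS$ are the standard ones and are essentially fine (for the factorization and for the uniqueness of $\psi$ one should work with reduced $Y$, since a set-theoretic inclusion of the image in the reduced space $X_\sS$, and density arguments, only give what you want when $Y$ is reduced).

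The genuinely missing step is the analyticity of the graph closure: writing ``the analytic closure of the graph'' presupposes exactly what has to be proved, because the topological closure of an analytic subset of $(X\setminus A)\times\mathrm{Gr}(r,N)$ need not be analytic -- this, rather than the bookkeeping around the universal property, is the real content of Rossi's theorem. The standard repair needs the relations and not only the generators: choose locally a presentation $\cO_X^p\overset{\alpha}\longrightarrow\cO_X^N\abb\sS\abb 0$ and consider the incidence set $Z:=\{(x,Q)\in X\times\mathrm{Gr}(r,N)\colon \CC^p\overset{\alpha(x)}\longrightarrow\CC^N\abb Q \mbox{ is zero}\}$, which is analytic. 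For $x\notin A$ the fiberwise presentation shows that the only rank-$r$ quotient of $\CC^N$ annihilating $\mathrm{im}\,\alpha(x)$ is $\sS(x)$ itself, so $Z$ coincides with the graph $\Gamma_g$ over $X\setminus A$; consequently $\overline{\Gamma_g}$ is the union of those (locally finitely many) irreducible components of $Z$ not contained in $A\times\mathrm{Gr}(r,N)$, hence analytic. This description also makes the construction visibly local on $X$, independent of the chosen presentation (via the universal property, as you say), and compatible with restriction to open subsets, which is what the last assertion of the theorem requires. With this inserted, your proposal is a faithful outline of the proof in the cited literature.
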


\section{Linear spaces of torsion-free coherent analytic sheaves}
\label{sec:LinearSpaces}

For a coherent analytic sheaf $\sS$ on a complex space $X$, 
we work with the \newterm{linear (fiber) space} $S:=L(\sS)$ associated to $\sS$ (in the sense of Fischer \cite{Fischer66, Fischer67} and Grothendieck \cite{Grothendieck61}).
Note that if $\sE$ is a locally free sheaf, then the linear space $L(\sE)$ is the dual of the vector bundle 
which has $\sE$ as sheaf of sections.

Linear fiber spaces are a special case of \newterm{complex analytic cones}, introduced by
R. Axelsson and J. Magn\'usson in \cite{Axelsson-Magnusson86}.
Their further study of complex analytic cones in \cite{Axelsson-Magnusson98}
offers particularly also new, clarifying insights into the theory of linear spaces.

\medskip
\subsection{Primary component of a linear space}
In the following, we will always assume that $X$ is a locally irreducible complex space.
Thus, $X$ decomposes into disjoint connected components which can be considered separately 
(see e.g. \S\,2 in \cite[\Chap 9]{GrauertRemmertCAS}). So, 
we can assume that $X$ is connected, thus also globally irreducible.
For a coherent analytic sheaf $\sS$, 
let $A$ be the thin analytic set in $X$ where $\sS$ is not locally free (see \cite{Grauert62}). We call $\Sing S = \Sing\sS:= A$ the \newterm{singular locus} of $S=L(\sS)$ and $\sS$, respectively.
As $X$ is irreducible, $X':=X\minus (A\cup X_\sing)$ and $A^c:=X\minus A$ are connected. 
$S_U\iso U\stimes\CC^r$, for small open sets $U\aus A^c$, implies that $S_{A^c}$ is also connected. 
The set $S_A$ is an analytic subset of $S$. 
Let $E$ be the irreducible component of $\red(S)$ which contains $S_{A^c}$.
$\PC(S):=E$ is called the {\it primary component} of $S$ (following the notation of \cite{Rabinowitz79}).
We have the decomposition $S=E\cup S_A$.

\begin{rem}\label{rem:CharTFWithPC}Let $S$ be a linear space associated to a coherent analytic sheaf $\sS$. Let $s\in \Hom(S_U, U\stimes\CC)\iso \sS(U)$ be a section. Then the primary component $E$ of $S$ determines $s$ up to torsion, \ie
if $s|_E=0$, then $s\in \sT(\sS)$. This is clear as $s|_E=0$ implies that $s$ is supported only on an analytically thin set.
\end{rem}

\begin{lem} \label{lem:TorsionImpliesRed}
Let $X$ be a locally irreducible complex space and $\sS$ a coherent analytic sheaf on $X$. 
Let $S=L(\sS)$ be the linear space associated to $\sS$ and $E$ its primary component.
If $\sS$ has a torsion element, then $E\neq S$. In particular, $S$ is reducible or non-reduced.
\end{lem}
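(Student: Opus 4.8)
The plan is to exhibit, locally on $S$, a nonzero section of $\cO_S$ that vanishes on the primary component $E$; since $E$ is reduced, no such section can exist if $S=E$. Because $\sS$ has a torsion element, I would fix a connected open set $U\subseteq X$ and a nonzero section $s\in\sT(\sS)(U)$, and pass to the linear form $\lambda_s$ on $S_U$ corresponding to $s$ under $\sS(U)\cong\Hom(S_U,U\stimes\CC)$, \ie the section of $\cO_S$ over $S_U$ that is linear along the fibres of $p\colon S\abb X$. Two things then have to be checked. First, $\lambda_s\neq0$ in $\cO_S(S_U)$: a nonzero section of $\sS$ is a nonzero linear form on $S_U$, equivalently $\sS(U)=\mathrm{Sym}^1(\sS)(U)$ injects into $\mathrm{Sym}(\sS)(U)\subseteq\cO_S(S_U)$ as a direct summand. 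Second, $\lambda_s$ vanishes on $E$: since $\sS$ is locally free, hence torsion-free, on $A^c=X\minus A$, one has $\sT(\sS)|_{A^c}=0$, so $s|_{A^c\cap U}=0$ and therefore $\lambda_s$ vanishes identically on $S_{A^c\cap U}$; as $S_{A^c}$ is dense in $E=\PC(S)=\overline{S_{A^c}}$ and $S_U$ is open in $S$, the holomorphic function $\lambda_s$ vanishes at every point of $E\cap S_U$, and because $E$ is reduced this means $\lambda_s|_E=0$. (This is the converse of the implication recorded in \autoref{rem:CharTFWithPC}, obtained by the same thin-support reasoning; in fact $\sT(\sS)$ is exactly the set of sections of $\sS$ vanishing on $E$.)

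With these two facts the conclusion is immediate: if $S=E$ as complex spaces, then $\cO_S=\cO_E$ and hence $\lambda_s=\lambda_s|_E=0$, contradicting $\lambda_s\neq0$; therefore $E\neq S$. For the final assertion I would use the decomposition $S=E\cup S_A$. If $\red(S)\neq E$, then $S_A\not\subseteq E$, so $S$ has an irreducible component distinct from $E$ and is reducible. If instead $\red(S)=E$, then $\lambda_s$ vanishes at every point of $S_U$, hence by the analytic Nullstellensatz lies in the nilradical of $\cO_S(S_U)$, so $S$ is non-reduced; in either case $S\neq\PC(S)$.

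I do not expect a real obstacle here; the only point that needs a little care is the vanishing of $\lambda_s$ on $E$, where one must use both the density of $S_{A^c}$ in $E$ and the reducedness of $E$, and where ``$E=S$'' has to be read as an identity of complex spaces (so that already a non-reduced $L(\sS)$ whose reduction happens to be irreducible fails to coincide with its primary component). As a slightly slicker packaging of the same idea I would note that the surjection $\sS\twoheadrightarrow\sS/\sT(\sS)$ induces a closed embedding $L(\sS/\sT(\sS))\hookrightarrow S$ whose image contains $E$ (the two sheaves agree on $A^c$ and $E=\overline{S_{A^c}}$); were $E=S$, this embedding would be an isomorphism, forcing $\sT(\sS)=0$, and hence $E\neq S$.
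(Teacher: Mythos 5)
Your proof is correct and follows essentially the same route as the paper: interpret the torsion section as a fiberwise linear holomorphic function on $S_U$, show it vanishes on the reduced primary component $E$ (you via $\sT(\sS)|_{A^c}=0$ and density of $S_{A^c}$ in $E$, the paper via inverting the multiplier $r$ on a dense open set and the identity theorem on $E$), and conclude $E\neq S$ since the function is nonzero on $S_U$. Your extra care in separating the non-reduced case from genuine set-theoretic reducibility, and the alternative packaging via the closed embedding $L(\sS/\sT(\sS))\hookrightarrow S$, are fine refinements of the same argument.
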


This observation is an immediate consequence of \Thm 3.10 in \cite{Axelsson-Magnusson98}.
Since it can also be deduced easily from the definition of the primary component, we would like to include a proof here:

\begin{proof} Assume that $\sS$ has a torsion element, \ie there is an open set $U\aus X$, an $s\in\sS(U)\iso\Hom(S_U,U\stimes \CC)$ (see \cite{Fischer67}) and an $r\in\cO_X(U)$ such that $s, r\ungl 0$ but $r\cdot s=0$ on $S_U$. As $X$ is locally irreducible, we can assume that $U$ is irreducible. So, there is a dense open set $V\aus U$ such that $r\in\cO_X^\ast(V)$. Thus $s|_{S_V}= 0$. But $V\cap (\Sing S)^c$ is also open and dense in $U$. So, $s|_E=0$ by the identity theorem as $E$ is irreducible. Since $s\ungl 0$, $S_U$ has to contain (parts of) other irreducible components than $E$ or is not reduced.\end{proof}

Obviously, a linear space can be reducible but reduced (for instance, $\{xz=0\}\aus\CC_x\stimes\CC_z$). The case that a linear space is irreducible and non-reduced can occur as well:
Let $S\aus \CC_{x,y}^2\times\CC_{z,w}^2$ be the linear space given by the ideal sheaf generated by $h_1(x,y;z,w):=y^2z-xyw$ and $h_2(x,y;z,w):=xy z-x^2w$. Then the primary component $E:=\PC(S)$ is given by $g(x,y;z,w):=yz-xw$, \ie $E=\red S$ and $S$ is irreducible. Yet, $g$ does not vanish on $S$ (in the unreduced sense) while $g^2=yzg-xwg=zh_1-wh_2$ does.

\begin{rem}\label{rem:CEIrred} The converse of \autoref{lem:TorsionImpliesRed} is not true:
 
Let $\sJ$ be the ideal sheaf generated by $x^2, xy, y^2$ on $\CC^2_{x,y}$ and $S:=L(\sJ)$ the linear space associated to $\sJ$. Since $\sJ$ can not be generated by 2 elements, we get $\rank S_0 =\rank \sJ_0 = 3$. Hence, $S_0$ is a 3-dimensional analytic subset of $S$. On the other hand, the primary component has dimension $2+\rank S=3$. Hence, $S$ is not irreducible. More precisely,
$S$ is given in $\CC^2_{x,y}\stimes\CC^3_{z}$
by the ideal sheaf generated by $h_1(x,y; z):= y z_1 - x z_2$ and $h_2(x,y; z):= yz_2 - x z_3$ where $z=(z_1,z_2,z_3)$. Since $y(z_2^2 - z_1 z_3) = z_2 h_2$,  the primary component $\PC(S)$ is defined by the functions $h_1, h_2$ and $z_2^2 - z_1 z_3$.
This shows also that the fibers of $\PC(S)$ are not linear (the fiber over the origin is just $\{z_2^2=z_1z_3\}$).
So, the primary component is in general not a linear space, neither in the sense of Fischer \cite{Fischer66, Fischer67}
nor in the sense of Grauert \cite{Grauert62}.%
\footnote{In contrast to Fischer's notion of a linear space,
where it is required that $+\colon S \times_X S \rightarrow S$ is a holomorphic map,
Grauert requires in \cite{Grauert62} (only) that the addition $+\colon S \oplus_X S \rightarrow S$ is holomorphic.
That gives a different
category of linear spaces (which is no longer dually equivalent to the category of coherent analytic sheaves).
In \cite{Rabinowitz78}, page 238, Rabinowitz claims that the primary component of a linear space
is a linear space in the sense of Grauert \cite{Grauert62}, but not in the sense of Fischer \cite{Fischer66, Fischer67}.
Our example shows that even this is not the case. More details and criteria for a Grauert linear space to be linear in Fischer's sense can be found in \cite[\Sec
 3.3]{Axelsson-Magnusson98}.}

Actually, we see that $S$ is the linear space associated to a torsion-free sheaf,
and it is reduced but not irreducible.
Considering analogously the ideal sheaf given by $x^2, xy^2, y^4$,
then it turns out that the associated linear space is neither irreducible nor reduced.

\spar
In general, for an ideal sheaf $\sI$ on a reduced complex space $X$,
the primary component of the associated linear space is given by the analytic spectrum of the Rees algebra of $\sI$ (see Exp.~3.12 (1) in \cite{Axelsson-Magnusson98}):
	\[\PC(L (\sI)) \iso \Specan\left(\bigoplus{\!\big.}_{m\gr 0}\,\sI^m\right).\]
In particular, $S(\sI)\iso\bigoplus_{m\gr 0}\sI^m$ implies that $L(\sI)\iso \PC(L(\sI))$ (\eg if $\sI$ is generated by an $\cO_X$-regular sequence or by two elements, see \cite[\Sec 3]{Barshay73} and \cite[(8) and \Thm 3.1]{Huneke80} resp.).
\end{rem}

\medskip
Using Rossi's monoidal transformation, we can make the following observation about the primary component:

\begin{thm}\label{thm:LocallyIrredOfPC} 
Let $X$ be a locally irreducible complex space and $\sS$ a coherent analytic sheaf on $X$. 
Then the primary component $E$ of the linear space $S$ associated to $\sS$ is locally irreducible.
\end{thm}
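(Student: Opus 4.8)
The goal is to show that the primary component $E = \PC(S)$ of the linear space $S = L(\sS)$ is locally irreducible. The key idea is to exploit Rossi's monoidal transformation $\ph_\sS\colon X_\sS \to X$ with respect to $\sS$, which exists by \autoref{thm:rossi} (after passing to connected, hence irreducible, components of $X$, which is permitted since $X$ is locally irreducible). Over $X_\sS$ the torsion-free preimage $\sE := \ph_\sS^T\sS$ is locally free, so its associated linear space $L(\sE)$ is a vector bundle (the dual bundle), which is certainly locally irreducible --- indeed a manifold-like object over $X_\sS$, and $X_\sS$ is itself normal or at least can be resolved. The strategy is to relate $E$ to $L(\sE)$ via the morphism of linear spaces induced by $\ph_\sS$, and to show that this morphism realizes $E$ as (the closure of) a biholomorphic image of a dense open part of $L(\sE)$, so that local irreducibility descends.

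\emph{Step 1.} Let $A = \Sing\sS$ be the thin analytic set where $\sS$ fails to be locally free, and recall $\ph_\sS$ restricts to a biholomorphism $X_\sS \setminus \ph_\sS^{-1}(A) \to X \setminus A$. The surjection $\ph_\sS^*\sS \twoheadrightarrow \sE$ of sheaves on $X_\sS$ dualizes to a closed embedding $L(\sE) \hookrightarrow L(\ph_\sS^*\sS)$, and the natural map $L(\ph_\sS^*\sS) \to L(\sS) \times_X X_\sS$ is an isomorphism (functoriality of $L(\cdot)$ under pullback). Composing, we obtain a holomorphic map $\Phi\colon L(\sE) \to S = L(\sS)$ lying over $\ph_\sS$. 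Over $X\setminus A$ this map is an isomorphism onto $S_{X\setminus A}$, since there $\sS$ is locally free and everything is the pullback of a vector bundle.

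\emph{Step 2.} I claim $\Phi$ maps $L(\sE)$ \emph{into} $E$ and, more precisely, that $E$ is the closure of $\Phi(L(\sE)|_{X_\sS \setminus \ph_\sS^{-1}(A)})$ in $S$, hence $E = \overline{\Phi(L(\sE))}$ if $\Phi$ is proper (it is, being a composition of a closed immersion with the projection of a proper modification pulled back --- one should check $L(\sE)\to S$ is proper, which follows because $\ph_\sS$ is proper and the fibre directions match up under the closed embedding, the subtlety being that fibres of $L(\sE)$ over $A$ need not be linear). Granting properness, $\Phi(L(\sE))$ is an analytic subset of $S$ containing the dense subset $S_{X\setminus A}\subset E$, and contained in $E$ by irreducibility of $L(\sE)$ over the irreducible base; since $E$ is by definition the irreducible component containing $S_{A^c}$, we get $\Phi(L(\sE)) = E$ as sets.

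\emph{Step 3 (local irreducibility descends).} Now $L(\sE)$ is the total space of the dual bundle of a locally free sheaf on $X_\sS$; since $X_\sS$ is locally irreducible (it is reduced and, being the monoidal transform, its singularities are still locally irreducible --- one may further compose with a resolution $M \to X_\sS$ to reduce to the smooth case if desired), the total space $L(\sE)$ is locally irreducible: locally it is $U \times \CC^r$ with $U$ locally irreducible. A surjective proper holomorphic map $\Phi\colon L(\sE) \to E$ which is generically biholomorphic (in fact an isomorphism away from the thin set $S_A$) is then a proper modification of $E$ with locally irreducible source; and a complex space which admits a proper modification that is biholomorphic over a dense open set with locally irreducible total space is itself locally irreducible. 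The cleanest way to phrase this last implication: at a point $e \in E$, the germs of $E$ at $e$ correspond to the connected components of a small punctured neighborhood (by local irreducibility being equivalent to connectedness of small punctured neighborhoods in the regular part), and properness of $\Phi$ together with connectedness of the fibre and local irreducibility upstairs forces connectedness downstairs. This is where one must be careful: fibres of $\Phi$ over points of $A$ could a priori be disconnected, so one may instead invoke that $\Phi$ factors the normalization of $E$ --- i.e. $\Phi$ lifts through the normalization $\nu\colon \tilde E \to E$ because $L(\sE)$ is normal when $X_\sS$ is (or after a further resolution $M\to X_\sS$ making things smooth), and a generically-one-to-one proper map from a normal space to $E$ realizes $\tilde E \to E$ as a homeomorphism, hence $E$ is locally irreducible as $\tilde E$ is locally connected.

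\emph{Main obstacle.} The delicate point is Step 2 combined with the end of Step 3: controlling the fibres of $\Phi$ over the singular locus $A$. The example in \autoref{rem:CEIrred} shows that $E$ can be non-reduced and its fibres over $A$ non-linear, so one cannot naively argue with linear-space structure. The honest route is to show $\Phi$ is \emph{proper and surjective onto $E$ with connected fibres} (or to pass to $\tilde E$ and show the lifted map is a proper modification), using the fact from \autoref{thm:rossi} that $\ph_\sS$ is a projective modification and that $L(\sE)\hookrightarrow L(\ph_\sS^*\sS) \cong S\times_X X_\sS$ is a closed subspace --- properness of $\Phi$ is then just properness of $\ph_\sS$. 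Connectedness of fibres of $\Phi$ over $A$ is the crux; I expect to deduce it from normality of $L(\sE)$ (Zariski's main theorem / Stein factorization: a proper map from a normal space that is generically one-to-one has connected fibres) rather than from any direct geometric description of $\Phi^{-1}(a)$. Once that is in hand, local irreducibility of $E$ follows from local irreducibility (indeed local connectedness) of $L(\sE)$ by a standard descent argument for proper modifications.
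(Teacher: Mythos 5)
Your Steps 1 and 2 coincide with the paper's own argument: one embeds $V:=L(\ph_\sS^T\sS)$ as a closed subspace of $L(\ph_\sS^*\sS)\cong X_\sS\times_X S$, observes that $V$ is the primary component of $\ph_\sS^*S$ and a (locally irreducible) vector bundle over $X_\sS$, and uses the proper projection $\pr\colon X_\sS\times_X S\to S$ together with Remmert's proper mapping theorem to get $\pr(V)=E$. The proposal fails, however, exactly at the point you yourself flag as the crux. The lemma you invoke there --- ``a proper map from a normal space that is generically one-to-one has connected fibres'' --- is false: in Zariski's main theorem / Stein factorization it is the \emph{target}, not the source, that must be normal. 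The normalization of a nodal curve is proper, generically one-to-one, with smooth (hence normal) source, yet the fibre over the node has two points. For the same reason the fallback claim that ``a generically-one-to-one proper map from a normal space to $E$ realizes $\tilde E\to E$ as a homeomorphism'' is both false in general and circular here: the normalization $\tilde E\to E$ is a homeomorphism precisely when $E$ is locally irreducible (unibranch), which is the very statement to be proved. So nothing in your Step 3 excludes nodal-type behaviour of $E$ over $\Sing\sS$, and the descent of local irreducibility from $L(\sE)$ to $E$ is not established.

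For comparison, the paper descends irreducibility without ever discussing fibres or the normalization: since $X_\sS$ is connected and locally irreducible, $V$ is locally irreducible, so every \emph{connected open} subset of $V$ is irreducible; for a connected open $W\aus E$ the paper argues that $\pr|_V^{-1}(W)$ is open and connected in $V$, hence irreducible, and therefore its proper holomorphic image $W=\pr|_V\big(\pr|_V^{-1}(W)\big)$ is irreducible by Remmert; as points of $E$ have neighborhood bases of connected open sets, $E$ is locally irreducible. If you want to salvage your write-up, replace the appeal to ``connected fibres from a normal source'' by this image-irreducibility mechanism (the remaining work then being the connectedness of $\pr|_V^{-1}(W)$, which the paper obtains from $\pr|_V\colon V\to E$ being a proper modification), rather than trying to argue through $\tilde E$.
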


\begin{proof}
As above, we can assume that $X$ is connected, \ie irreducible. 
Let
\[\ph:=\ph_\sS\colon X_\sS \rightarrow X\]
be the monoidal transformation of $X$ with respect to $\sS$.
This implies that $\ph$ is biholomorphic on $X_\sS\setminus \ph^{-1}(A)$,
where $A\subset X$ is the thin analytic subset where $\sS$ is not locally free.
Then
\[\ph^* S = X_\sS \times_X S\]
is the linear space associated to $\ph^* \sS$, and there is a proper holomorphic projection
\[\pr\colon \ph^* S \rightarrow S.\]
Now consider the natural surjective homomorphism
\[\ph^* \sS \longrightarrow \ph^T \sS = \ph^* \sS / \sT(\ph^*\sS)\]
which induces a closed embedding of the linear space $V:=L(\ph^T \sS)$ into $\ph^* S$. 
Note that $V$ coincides with $\ph^* S$ on $X_\sS\setminus\ph^{-1}(A)$.
Thus, the vector bundle $V$ is just the primary component of $\ph^* S$,
and $V$ is clearly locally and globally irreducible (because the base space $X_\sS$ is connected and locally irreducible).

\medskip
As $\pr$ is a proper holomorphic mapping, we have that $\pr(V)$ is an irreducible analytic subset of $S$
by Remmert's proper mapping theorem 
and the fact that holomorphic images of irreducible sets are again irreducible
(see \S\,1.3 in \cite[\Chap 9]{GrauertRemmertCAS}).
But $\pr(V)$ coincides with $E$, the primary component of $S$, over $X\setminus A$. Thus:
\begin{equation}\label{eq:praeSurj}\pr(V) = E,\end{equation}
and so $\pr|_V\colon V \rightarrow E$ is a proper modification. Using this and the fact that $V$ is clearly locally irreducible,
it is easy to see that $E$ is also locally irreducible:
For an open connected set $W\subset E$, $\pr|_V^{-1}(W) \subset V$ is again open and connected, thus irreducible since $\ph$ is a proper modification of the irreducible $X$.
But then $W=\pr|_V\big(\pr|_V^{-1}(W)\big)$ is also irreducible by the same argument as above
(holomorphic images of irreducible sets are irreducible).
\end{proof}

\begin{lem}\label{lem:IrredIfPCIsLS} Let $X$ be a locally irreducible complex space and $\sS$ a torsion-free coherent analytic sheaf on $X$.
Then $S=L(\sS)$ is locally irreducible if and only if the primary component of $S$ is a linear space.\end{lem}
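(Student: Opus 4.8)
The plan is to prove the two implications separately, using that $\sS$ is torsion-free so that (by Lemma \ref{lem:TorsionImpliesRed} and its surrounding discussion) the linear space $S = L(\sS)$ is already \emph{reduced} and has $S_{A^c}$ dense in the primary component $E$ — indeed, since $\sS$ has no torsion, the natural surjection $\sS \to \sS$ induces on $S$ a closed embedding whose image is exactly $E$, so that $S$ is reduced precisely when $E = S$ is; more carefully, torsion-freeness forces $S = E$ set-theoretically only if the other components $S_A$ carry no extra structure, so one should first spell out that for torsion-free $\sS$ the decomposition $S = E \cup S_A$ reduces to the question of whether $E$ absorbs $S_A$.

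For the direction ``$S$ locally irreducible $\Rightarrow$ the primary component is a linear space'': if $S$ is locally irreducible, then by definition of $E$ as the irreducible component through $S_{A^c}$ we get $E = S$ locally at every point, hence $\PC(S) = S = L(\sS)$ is itself a linear space, and there is nothing more to prove. (This direction is essentially tautological once one unwinds the definitions.)

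For the converse ``$\PC(S)$ a linear space $\Rightarrow S$ locally irreducible'': suppose $E = \PC(S)$ carries the structure of a linear space, say $E = L(\sE)$ for a coherent analytic sheaf $\sE$ on $X$. The restriction map $\sS(U) \cong \Hom(S_U, U\times\CC)$ precomposed with the embedding $E \hookrightarrow S$ gives a sheaf homomorphism $\sS \to \sE$ (dualizing the inclusion), and Remark \ref{rem:CharTFWithPC} says its kernel is the torsion of $\sS$; since $\sS$ is torsion-free, $\sS \to \sE$ is injective. On the other hand $E$ agrees with $S$ over $A^c = X \setminus A$, where $\sS$ is locally free, so $\sS \to \sE$ is an isomorphism over $A^c$; as $A$ is thin and $\sE$ is (the sheaf of a linear space, hence) a sheaf on $X$, I would argue that this forces $\sS \cong \sE$, whence $S = L(\sS) = L(\sE) = E$ is irreducible — and then local irreducibility follows from Theorem \ref{thm:LocallyIrredOfPC}, which already asserts that $E$ is locally irreducible.

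The main obstacle is the last step: upgrading ``$\sS \to \sE$ is injective and an isomorphism off the thin set $A$'' to ``$\sS \to \sE$ is an isomorphism''. This is where one needs to exploit that $\sE$ comes from the primary component $E$ and not an arbitrary sheaf — e.g.\ that $E$, being the closure of $S_{A^c}$ with its reduced structure, cannot be ``smaller'' than $S$ unless $S$ itself has extra components, combined with the fact that a homomorphism of torsion-free coherent sheaves that is bijective on a dense Zariski-open set and whose cokernel is supported on a thin set must already be surjective when the target is reflexive or when $S$ is reduced. I expect the clean way to close this is to observe directly: $S$ is reduced (torsion-freeness), $E$ is an irreducible component of $S$ containing the open dense $S_{A^c}$, and the hypothesis that $E$ is a linear space means $E$ is cut out by linear equations in the relevant $\CC^N$-bundle; comparing fibre dimensions, $\rank S_x = \rank E_x$ for all $x$, which together with reducedness of $S$ forces $S = E$.
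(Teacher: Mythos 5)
The first direction ($S$ locally irreducible $\Rightarrow$ $\PC(S)=S$ is a linear space) is fine and is also dismissed as trivial in the paper. The converse, however, has a genuine gap exactly where you flag it, and the way you propose to close it does not work. You want to deduce $S=E$ from ``$S$ is reduced (torsion-freeness)'' together with ``$\rank S_x=\rank E_x$ for all $x$''. The first claim is false: torsion-freeness of $\sS$ does \emph{not} imply that $L(\sS)$ is reduced --- the paper's own \autoref{rem:CEIrred} exhibits the torsion-free ideal sheaf $\sJ=(x^2,xy^2,y^4)$ on $\CC^2$ whose linear space is neither irreducible nor reduced (reducedness of $S$ is a \emph{consequence} of $S=E$, not an available hypothesis). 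The second claim, equality of fiber dimensions of $S$ and $E$ over the singular locus, is asserted without any argument and is essentially equivalent to what must be proved; in the example just cited the fibers over $0$ have dimensions $3$ and $2$, so some use of the hypothesis that $E$ is a linear space is indispensable, and you never make that use precise.

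The paper closes the converse by a short argument you are missing: if $E$ is a linear space, then locally $E\aus U\stimes\CC^N$ is cut out by \emph{fiberwise linear} holomorphic functions $h_1,\dots,h_m$ (Lemma 1 of \cite{Fischer67}); each restriction $h_j|_{S_U}$ is a section in $\Hom(S_U,U\stimes\CC)\iso\sS(U)$ vanishing on the primary component, hence a torsion element by \autoref{rem:CharTFWithPC}, hence zero since $\sS$ is torsion-free. Thus every defining equation of $E$ vanishes on $S$ in the non-reduced sense, so $S\aus E$ and $S=E$; local irreducibility then follows from \autoref{thm:LocallyIrredOfPC}. (Your dualization idea $\sS\to\sE$ could in principle also be completed --- one would show surjectivity by observing that the closed embedding $E\hookrightarrow U\stimes\CC^N$ of linear spaces dualizes to a surjection $\cO_U^N\to\sE$ factoring through $\sS$ --- but as written your argument does not establish surjectivity, and the reducedness/rank shortcut you substitute for it is unsound.)
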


\begin{proof} Let $E\aus S$ denote the primary component of $S$. 
As $\sS$ is torsion-free, \cite[\Thm 3.10]{Axelsson-Magnusson98} implies that $E=S$
if and only if the primary component $E$ is a linear space.
Alternatively, this assertion can be proven by elementary computations with \cite[\Lem 1]{Fischer67}:
Assume first that $E$ is a linear space. For all points in $X$, there is a neighborhood $U\aus X$ such that  $E_U$ and $S_U$ are linear spaces in $U\stimes \CC^N$. \Lem 1 in \cite{Fischer67} implies that $E$ is defined by holomorphic functions $h_1,...,h_m\in \cO(U\stimes\CC^N)$ that are fiberwise linear. The restriction of $h_j$ to $S$ gives a section in $\Hom(S_U,U\stimes\CC)\iso\sS(U)$. Since $h_j$ vanishes on the primary component $E$ of $S$, we get $h_j\in \sT(\sS)=0$ (see \autoref{rem:CharTFWithPC}), \ie $S\aus E$. This shows that actually $E=S$. The converse of the assertion is trivial.

The statement of the lemma follows now with \autoref{thm:LocallyIrredOfPC}.
\end{proof}

\spar
As we have seen in the counter-example \autoref{rem:CEIrred}, the primary component need not be a linear space (even in the sense of Grauert). Though, it appears as the
analytic spectrum of a connected graded $\cO_X$-algebra of finite presentation.
So, it is locally isomorphic to a subcone of a trivial linear space defined by quasihomogeneous polynomials (see \Cor 1.13 in \cite{Axelsson-Magnusson86}).
Since a symmetric algebra and its torsion-free reduction is generated by elements of degree one, R. Axelsson and J. Magn{\'u}sson actually
showed in the proof of \cite[\Cor 1.13]{Axelsson-Magnusson86} also the following:

\begin{lem}\label{lem:Homogeneous}
Let $X$ be a (locally irreducible) complex space and $\sS$ a coherent analytic sheaf on $X$. 
Then the primary component $E$ of the linear space $S=L(\sS)$ associated to $\sS$ is fiberwise homogeneous and $E$ is locally defined as analytic set in $U\stimes\CC^N$ by holomorphic fiberwise homogeneous functions for $U\aus X$ small enough.
\end{lem}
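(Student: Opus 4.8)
The statement to prove is \autoref{lem:Homogeneous}: the primary component $E$ of $S = L(\sS)$ is fiberwise homogeneous, and locally it is cut out in $U \times \CC^N$ by holomorphic fiberwise homogeneous functions. The key structural fact I would exploit is that the linear space $S$ itself carries a $\CC^\ast$-action: scalar multiplication $\mu_\lambda \colon S \to S$, $v \mapsto \lambda v$ in the fibers, is holomorphic (this is part of the definition of a linear space in the sense of Fischer). The zero section is a fixed point set of this action, and so is $S_A$ (it is a linear subspace in each fiber over $A$, being the fiber of a linear space). The whole strategy is: show $E$ is invariant under this $\CC^\ast$-action, then translate invariance into the statement about homogeneous defining functions.

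**Step 1: $E$ is $\CC^\ast$-invariant.** For each $\lambda \in \CC^\ast$, $\mu_\lambda$ is a biholomorphism of $S$, hence it permutes the irreducible components of $\red(S)$. Over $X \setminus A$ the space $S$ is a vector bundle, so $S_{A^c}$ is $\CC^\ast$-invariant; since $E$ is by definition the unique irreducible component of $\red(S)$ containing the (irreducible) set $S_{A^c}$, and $\mu_\lambda(E)$ is again an irreducible component containing $\mu_\lambda(S_{A^c}) = S_{A^c}$, we get $\mu_\lambda(E) = E$ for all $\lambda \in \CC^\ast$. By continuity (taking $\lambda \to 0$) the closure $E$ also contains the zero section, and $E$ is invariant under the full multiplicative monoid $\CC$ acting fiberwise.

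**Step 2: from invariance to homogeneous equations.** Fix $p \in X$ and a small $U \ni p$ with a closed embedding $S_U \hookrightarrow U \times \CC^N$ such that $E_U$ is an analytic subset. Let $g_1, \dots, g_k \in \cO(U \times \CC^N)$ generate the ideal of $E_U$ near a point of the zero section. For fixed $x \in U$, expand each $g_j(x, \cdot)$ as a convergent series $\sum_{d \geq 0} g_{j,d}(x, \cdot)$ in homogeneous polynomials $g_{j,d}$ of degree $d$ in the fiber coordinates; the coefficients depend holomorphically on $x$, so each $g_{j,d} \in \cO(U \times \CC^N)$ is holomorphic and fiberwise homogeneous of degree $d$. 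The $\CC^\ast$-invariance of $E_U$ from Step 1 means $g_j(x, \lambda v) = 0$ whenever $g_j(x, v) = 0$, i.e. $\sum_d \lambda^d g_{j,d}(x,v) = 0$ for all $\lambda \in \CC^\ast$ on $E_U$; since $\{\lambda^d\}$ are linearly independent functions of $\lambda$, each $g_{j,d}$ vanishes on $E_U$. Conversely the $g_{j,d}$ together generate an ideal containing all $g_j$, hence cut out exactly $E_U$ (as a set; near the zero section, and then on all of $E_U$ by spreading out along the $\CC^\ast$-action and shrinking $U$ if necessary). This exhibits $E_U$ as the zero set of holomorphic fiberwise homogeneous functions, and fiberwise homogeneity of $E$ is immediate from $\CC^\ast$-invariance of the fibers.

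**Main obstacle.** The analytically delicate point is keeping the homogeneous-component construction globally consistent over all of $U$ and near points of $E$ not on the zero section: the expansion into fiberwise homogeneous pieces is most natural near the zero section, and one must argue that the resulting ideal sheaf (generated by all homogeneous components of a chosen finite generating set) really defines $E_U$ as a set everywhere, not just on the cone part visible from $0$. I expect this to go through by a standard argument: $E_U$ is itself $\CC^\ast$-invariant with $E_U \cap (U \times \{0\})$ nonempty in each fiber direction it meets, so a point of $E_U$ lies in the closure of its $\CC^\ast$-orbit, which limits into the zero section; thus vanishing of the homogeneous components on a neighbourhood of the zero-section part of $E_U$ propagates. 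One should also note this only claims $E$ is cut out as an analytic \emph{set} by such functions — consistent with \autoref{rem:CEIrred}, where $E$ need not even be reduced — so no claim about the ideal sheaf being generated by homogeneous elements is needed.
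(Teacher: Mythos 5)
Your proof is correct, and it reaches the conclusion by a somewhat different mechanism than the paper. For the fiberwise homogeneity, the paper does not argue via the automorphisms $\mu_\lambda$ permuting irreducible components; instead it takes (global, since $U$ is chosen Stein) defining functions $f_1,\dots,f_k$ of $E_U$ in $U\times\CC^N$, forms $f_j^{\hbox{$\cdot$}}(\lambda,x,z)=f_j(x,\lambda z)$, observes these vanish on $(\CC\times E)_{U\setminus A}$ because $E=S$ is fiberwise linear there, and then uses that the closure of this set is all of $\CC\times E_U$ to conclude homogeneity of the fibers --- an identity-theorem/density argument where your component-permutation argument is arguably cleaner and more structural (it also gives invariance under the full monoid $\CC$, including the zero section, for free). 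For the second assertion, the paper, having shown that scalar multiplication restricts to a holomorphic map $\CC\times E_U\to E_U$, simply cites the first step of the proof of Lemma 1 in Fischer's paper, which is exactly the fiberwise Taylor expansion into homogeneous components that you carry out by hand; your spreading-out along the cone structure (scale a point of $E_U$ into the ball where the generators live, use homogeneity of the components, and scale back with $\mu_{\lambda^{-1}}$ for the reverse inclusion) is a correct way to make that step explicit. One caveat: you deliberately prove only the set-theoretic version ($E_U$ is the common zero set of fiberwise homogeneous functions), which matches the literal wording of \autoref{lem:Homogeneous}, but the paper's proof (via Fischer) yields the stronger fact that the ideal sheaf of the analytic set $E_U$ is generated by fiberwise homogeneous functions, and it is this ideal-theoretic form that is invoked later, e.g.\ in \autoref{lem:Hypersurfaces} and \autoref{thm:IrredCorank2} (where memberships like $h_i\in(g_1,\dots,g_k)$ are used). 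Your construction does give this too with a small supplement --- near the zero section the ideal generated by all homogeneous components contains each $g_j$ since ideals in the Noetherian local rings are closed under the relevant adic topology, and the behaviour away from the zero section is controlled by the $\CC^\ast$-action --- but as written this upgrade is not part of your argument, so it should be added if the lemma is to serve its later applications. Your remark that no reducedness claim is made is consistent with \autoref{rem:CEIrred}.
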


We would like to present an alternative proof which uses an argument of G. Fischer instead of \cite[\Cor 1.13]{Axelsson-Magnusson86}:
\begin{proof} As above, let $A\subset X$ be the singular locus of $\sS$. So, $E$ and $S$ coincide over $X\setminus A$, and we only have to show that the fibers of $E$ are homogeneous over points of $A$.

\medskip
The question is local, so consider a point $p\in A$ and a Stein neighborhood $U$ of $p$ in $X$ such that $S_U$ can be realized as a closed linear subspace of $U\stimes \CC^N$. Now $E_U \subset U\stimes\CC^N$ is a closed component of $S_U$ which is linear in the second component over $U\setminus A$. Let $f_1, ..., f_k$ be a set of defining functions for $E_U$ in $U\stimes \CC^N$ ($U$ is chosen to be Stein). For $f_j$, $j=1, ..., k$, we define
	\[f_j^{\hbox{\large$\cdot$}}(\lambda,x,z):= f_j(x,\lambda\cdot z)\]
 on $\CC \stimes U\stimes \CC^N$. Since $E_{U\setminus A}$ is linear, the $f_j^{\hbox{\large$\cdot$}}$ vanish on $(\CC\stimes E)_{U\setminus A}$ and on its closure. By definition, the closure is the irreducible set $\CC\stimes E_U$. Hence, the fibers of $E_U$ are homogeneous. Therefore, $\cdot\colon\CC\stimes E_U\abb  E_U$ given by the restriction of $\cdot:\CC \stimes U \stimes\CC^N\abb U\stimes\CC^N$ is a holomorphic map. Using this, we obtain that the ideal sheaf defining $E_U$ (as analytic set) is generated by fiberwise homogeneous functions (see the first step in the proof of \Lem 1 in \cite{Fischer67}). \end{proof}

If $E {\times_{\!X}} E$ is locally irreducible, one can prove in the same way
that the primary component $E$ is a linear space. Yet, for an irreducible fiber space $E\abb X$, the fiber product of $E{\times_X} E$ need not be reduced (not to mention locally irreducible; for a counter-example, see \Sec 4 in \cite{Fischer66}). Therefore, the restriction of the addition need not be holomorphic.

\mpar
\subsection{Linear spaces of small corank -- Proof of \autoref{thm:MainIrred}}

For a linear space to be (locally) irreducible, it is necessary that the associated coherent analytic sheaf is torsion-free.
In the following, we will prove that this is a sufficient criterion under certain additional assumptions.

\begin{defn}
For a coherent analytic sheaf $\sS$ on a complex space $X$, we define the \newterm{corank of $\sS$} in a point $p\in X$, $\cork_p\sS$,
as the difference of the minimal number of generators of $\sS_p$ and the rank of $\sS$,
and the global corank $\corank \sS:= \sup_{p\in X} \corank_p \sS$.
The corank of a linear space is defined as the corank of the associated coherent analytic sheaf.\end{defn}

\spar
We get the following relation between the corank and the homological dimension of a coherent analytic sheaf:

\begin{lem}\label{lem:corank-criterion-homological-dimension}
Let $X$ be a complex space and $\sS$ a coherent analytic sheaf on $X$. Then, for all $p\in X$, the following is equivalent:
\begin{itemize}
	\item[(1)] There exists a neighborhood $U$ of $p$ such that the following sequence is exact:
		\[\cO_U^{\cork_p \sS}\overset{\alpha}\longrightarrow \cO_U^{\rank \sS+\cork_p\sS} \abb \sS_U\abb 0.\]
	\item[(2)] The homological dimension of $\sS$ in $p$ is less or equal 1, \ie (by definition) there exists a neighborhood $U$ of $p$ such that
		\[0 \abb \cO_U^m \overset{\alpha}\abb \cO_U^{N}\abb\sS_U  \abb 0\]
		is exact for suitable $m$ and $N$.
\end{itemize}
If (1) and (2) are fulfilled, then $\alpha$ in (1) is injective, i.e., 
$m$ and $N$ in (2) can be chosen to be $m=\cork_p\sS$ and $N=\rk\sS+m$.
\end{lem}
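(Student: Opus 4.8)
The statement is a local equivalence, so I would fix $p\in X$ and work in a small neighborhood. The implication (2)$\Rightarrow$(1) is essentially a bookkeeping argument: given a short free resolution $0\to\cO_U^m\xrightarrow{\alpha}\cO_U^N\to\sS_U\to0$, one wants to normalize $N$ and $m$ to $N=\rank\sS+\cork_p\sS$ and $m=\cork_p\sS$. After shrinking $U$, the rank of $\sS$ is constant and equals $N-m$ (the resolution is exact, so the generic rank of $\sS$ is $N-m$), so it only remains to see that $N$ can be taken minimal, i.e.\ $N=\rank\sS+\cork_p\sS$, the minimal number of generators of $\sS_p$. This follows from a minimal–generators argument: if $N$ exceeds the minimal number of generators of $\sS_p$, then one generator lies in the $\cO_{X,p}$-submodule generated by the others plus $\am_p$-multiples, and Nakayama-type splitting lets one peel off a free summand from both $\cO_U^N$ and $\cO_U^m$ simultaneously without destroying exactness, lowering $N$ and $m$ by one. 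Iterating brings us to the minimal presentation, which is exactly sequence (1).

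**The other direction.** For (1)$\Rightarrow$(2), one is handed a right-exact sequence $\cO_U^{c}\xrightarrow{\alpha}\cO_U^{r+c}\to\sS_U\to0$ with $c=\cork_p\sS$, $r=\rank\sS$, and one must show (after possibly shrinking $U$) that $\alpha$ is injective, so that the sequence becomes a length-one free resolution. Injectivity of $\alpha$ at the generic point is clear by the rank count: $\ker\alpha\otimes\cO_{X,p}/\am_p$ would have to be supported on the locus where $\sS$ has rank $>r$, which is nowhere dense. The point is to promote this to injectivity of the sheaf map itself. Here I would use that $\cO_X$ is torsion-free (more precisely: reduced, or at least that $X$ is locally irreducible as is standing assumption, so $\cO_{X,p}$ has no zero divisors coming from distinct components) and that $\ker\alpha$ is a coherent subsheaf of the free sheaf $\cO_U^c$ which vanishes on a dense open set; hence $\ker\alpha$ is a torsion subsheaf of a torsion-free sheaf, so it is $0$. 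This is the cleanest route and sidesteps any homological-dimension computation.

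**Where the difficulty is.** The genuinely delicate point is the minimal-presentation normalization in (2)$\Rightarrow$(1): one must split off free summands from the two-term complex in a way that is compatible with the differential $\alpha$, i.e.\ show that a redundant generator can be removed together with a corresponding relation, keeping the sequence exact and free of length one. This is standard over a local ring (minimal free resolutions exist and are unique), but I want it as an honest statement about coherent sheaves on a neighborhood $U$, which forces a shrinking argument: the splitting is defined at $p$ by Nakayama, and then extends to a neighborhood because the relevant matrix entries are holomorphic and a unit at $p$ stays a unit nearby. Once that lemma is in hand, both implications are short. I would also remark that the equivalence is exactly the well-known fact that $\hd_p\sS\le1$ iff $\sS$ admits a presentation of the expected minimal ranks, so the content is really in making ``minimal'' work sheaf-theoretically; the rest is the rank count plus torsion-freeness of $\cO_X$.
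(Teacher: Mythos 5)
Your proposal is correct and follows essentially the same route as the paper: for (1)$\Rightarrow$(2) you show $\ker\alpha$ vanishes on the dense open locus where $\sS$ is locally free, hence is a torsion subsheaf of the torsion-free $\cO_U^{c}$ and therefore zero, exactly as in the paper; for (2)$\Rightarrow$(1) you reduce to the minimal presentation and then use injectivity plus the rank count $N-m=\rank\sS$ to identify $m=\cork_p\sS$. The only cosmetic difference is that you re-derive the minimal-presentation reduction by a Nakayama/unit-entry splitting argument (with the shrinking of $U$ spelled out), where the paper simply cites the uniqueness of the minimal free resolution.
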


\begin{proof} For the implication \spimply12, we just need to show that $\alpha$ is injective: In points where $\sS$ is locally free, $\alpha$ is injective (due to the rank\,/\,dimension). Hence, $\sKer \alpha$ has support on a proper analytic set in $U$, \ie is a torsion sheaf or the zero sheaf. Since $\cO_U$ does not contain any torsion sheaf, $\alpha$ is a monomorphism. (Alternatively, one can apply \autoref{lem:Monomorphism}.)
\spar
{\spimply 21:}\quad 
By the uniqueness of the minimal resolution (see \eg \Thm 20.2 in \cite{Eisenbud95}), we can assume that $N$ is equal to the minimal number of generators of $\sS$ in $p$, \ie
	\[N = \rk_p \sS \overset{\hbox{\tiny def}}= \cork_p\sS + \rk\sS.\] The injectivity of $\alpha$ implies $N-m = \rk (\cO^N/\alpha(\cO^m)) = \rk \sS$, \ie
\thereqed{1.5\baselineskip}
	\[m=N-\rk\sS=\cork_p\sS.\]
\end{proof}

\spar

\begin{rem}\label{rem:CohenMacaulay} We will work with Cohen-Macaulay spaces (for a definition and some crucial properties, see \eg \S\,5 in \cite{PeternellRemmert94}). Let us recall the following facts about Cohen-Macaulay spaces which will be used below:
\begin{itemize}
\item[(i)] A complex space $X$ is Cohen-Macaulay in $p\in X$ if and only if for any (or at least one) non-zero-divisor $f$ in the maximal ideal sheaf $\am_p$, $\{f=0\}$ is Cohen-Macaulay in $p$.
\item[(ii)] If $X$ is Cohen-Macaulay and $A$ is an analytic subset of $X$ with $\codim A \gr 2$, then $\cO(X)\abb \cO(X\minus A)$ is bijective.
\item[(iii)] A Cohen-Macaulay space is normal if and only if its singular set is at least 2-codimensional.
\end{itemize}\end{rem}

\begin{lem}\label{lem:Hypersurfaces}
Let $X$ be a normal or Cohen-Macaulay space and $S\aus X\stimes\CC^N$ be a linear space over $X$ with at least 2-codimensional singular locus in $X$ and defined by one fiberwise linear function $h \in\cO(X\stimes\CC^N)$. Then $S$ is locally irreducible. In particular, the coherent analytic sheaf associated to $S$ is torsion-free.\end{lem}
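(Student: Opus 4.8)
The plan is to reduce the statement to a dimension count against the primary component~$E$ of~$S$, using only that $X$ is locally irreducible. First I would write the fiberwise linear defining function as $h(x,z)=\sum_{j=1}^{N}a_{j}(x)z_{j}$ with $a_{j}\in\cO(X)$, so that $S=L(\sS)$ for $\sS=\cO_{X}^{N}/\cO_{X}\cdot(a_{1},\dots,a_{N})$. Off the set $\{a_{1}=\dots=a_{N}=0\}$ some $a_{j}$ is a unit and $\sS$ is locally free of rank $N-1$, whereas $\sS$ cannot be locally free at a point of that set (its generic rank being $N-1$), so $A=\Sing\sS=\{a_{1}=\dots=a_{N}=0\}$. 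The assertion is local, so I fix $p\in X$ and may assume the germ $(X,p)$ is irreducible of dimension $n:=\dim_{p}X$. Two cases are immediate: if all $a_{j}$ vanish identically near $p$, then $h\equiv0$ and $S=X\stimes\CC^{N}$ is locally irreducible because $X$ is; if $p\notin A$, then some $a_{j}$ is a unit near $p$ and, near every point over~$p$, $S$ is the graph $\{z_{j}=-a_{j}^{-1}\sum_{k\ne j}a_{k}z_{k}\}$ over $X\stimes\CC^{N-1}$, again locally irreducible. So from now on $p\in A$ and not all $a_{j}$ vanish on $(X,p)$; then the germ of $h$ at any $q=(p,\zeta)$ is a nonzero, hence regular, element of the analytic local \emph{domain} $\cO_{X\stimes\CC^{N},q}$.

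Next I would set up the dimension count. Since $X$ is locally irreducible, so is $X\stimes\CC^{N}$, and $(X\stimes\CC^{N},q)$ is irreducible of dimension $n+N$. Because $h$ is a non-zero-divisor and analytic local rings are catenary (and the ring is a domain, hence equidimensional), Krull's principal ideal theorem gives that the hypersurface germ $(S,q)=\{h=0\}$ is \emph{pure} of dimension $n+N-1$. On the other hand $S=E\cup S_{A}$ with $S_{A}=S|_{A}=A\stimes\CC^{N}$ (since $a_{j}|_{A}\equiv0$), and over $X\minus A$ one has $S=E$, a rank-$(N-1)$ vector bundle; in particular $\red(S)\minus S_{A}\aus E$. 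As $\codim_{X}A\gr2$ and $(X,p)$ is purely $n$-dimensional, $(S_{A},q)=(A\stimes\CC^{N},q)$ has dimension $\dim_{p}A+N\kl(n-2)+N=n+N-2$.

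Then the conclusion runs as follows. Let $\Gamma$ be an irreducible germ-component of $(S,q)$. Since $\Gamma$ is irreducible and $\Gamma\minus S_{A}\aus E$, either $\Gamma\aus(S_{A},q)$ or $\Gamma\aus(E,q)$; the former is impossible because $\dim\Gamma=n+N-1>\dim(S_{A},q)$. Hence every component of $(S,q)$ lies in $(E,q)$, which is irreducible by \autoref{thm:LocallyIrredOfPC}; therefore $(S,q)$ has a single component and is irreducible. As $\zeta$ and $p$ were arbitrary, $S$ is locally irreducible, so in fact $\red(S)=E$. Finally, if $\sS$ had a torsion element, then $E\ne S$ by \autoref{lem:TorsionImpliesRed} --- a contradiction; so $\sS$ is torsion-free, which gives the last sentence.

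The step I expect to require the most care is the purity of the hypersurface germ $(S,q)$ in dimension $n+N-1$: this is precisely what, together with the hypothesis $\codim_{X}A\gr2$ (which makes $S_{A}$ of strictly smaller dimension), forbids an extra branch coming from over~$A$; one also has to be slightly careful with the identification $A=\{a_{1}=\dots=a_{N}=0\}$ and with the fact that $X\stimes\CC^{N}$ inherits local irreducibility from~$X$. Everything else is bookkeeping; in particular, this argument uses only the local irreducibility of~$X$, which is in force throughout this section, and not normality or Cohen--Macaulayness.
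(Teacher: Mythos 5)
Your dimension count does prove something true, but it proves less than the lemma asserts, and the missing part is exactly where the normal/Cohen--Macaulay hypothesis enters. What you establish is that every irreducible component of the germ of $S$ has dimension $n+N-1$ and hence cannot lie in $S_A$, so $\red(S)=E$ locally: the \emph{underlying set} of $S$ is locally irreducible. The paper proves the stronger, ideal-theoretic statement $E=S$ as complex subspaces: by \autoref{lem:Homogeneous} the full ideal of $E$ is $(h,g_1,\dots,g_m)$ with the $g_i$ fiberwise homogeneous, and on $(X\minus A)\stimes\CC^N$ one has $g_i\in(h)$, so $g_i/h$ is holomorphic there and extends across the $2$-codimensional set $A\stimes\CC^N$ precisely because $X\stimes\CC^N$ is normal or Cohen--Macaulay (\autoref{rem:CohenMacaulay}\,(ii)); hence $(h)$ equals the ideal of $E$, i.e.\ $S$ is reduced and coincides with its primary component. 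Your final step then breaks down: \autoref{lem:TorsionImpliesRed} gives, from a torsion element, only that $E\neq S$ \emph{as linear subspaces} (a nonzero section can vanish on the reduced set $E$ while not lying in $(h)$), and this is perfectly compatible with your conclusion $\red(S)=E$ --- the discrepancy can sit entirely in a non-reduced structure of $S$. So no contradiction is obtained, and torsion-freeness does not follow from set-theoretic irreducibility.

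The hypothesis you propose to drop is in fact indispensable. Let $X\aus\CC^4_{u}$ be the image of $(s,t)\mapsto(s,st,t^2,t^3)$: a locally irreducible surface, smooth outside the origin, but neither normal nor Cohen--Macaulay (the weakly holomorphic function $t=u_4/u_3$ does not extend holomorphically to $0$; since $X$ is regular in codimension $1$, Cohen--Macaulay would force normality). Take $N=2$ and $h=u_3z_1+u_1z_2$, so $\sS=\cO_X^2/\cO_X\cdot(u_3,u_1)$ and $\Sing\sS=\{0\}$ has codimension $2$. Your dimension count applies verbatim and shows $\red(S)$ is locally irreducible; nevertheless $\sS$ has torsion at $0$: $u_1\cdot(u_4,u_2)=u_2\cdot(u_3,u_1)$ in $\cO_X^2$, while $(u_4,u_2)\notin\cO_X\cdot(u_3,u_1)$ because that would make $u_4/u_3$ holomorphic. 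Equivalently, the fiberwise linear form $u_4z_1+u_2z_2$ vanishes on $E$ but does not lie in $(h)$, so $S$ is irreducible as a set yet non-reduced, and the conclusion of \autoref{lem:Hypersurfaces} fails. Thus your closing remark that only local irreducibility of $X$ is used is precisely the symptom of the gap: without normality or Cohen--Macaulayness you cannot carry out the division-and-extension step (or any substitute for it), and the torsion-freeness claim is false in that generality.
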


\begin{proof} 
Let $A\aus X$ denote the singular locus of $S$ (as linear space) and $E$ denote the primary component of $S$.
\autoref{lem:Homogeneous} implies that $E$ is given by the ideal sheaf $(h, g_1,...,g_m)$ with $g_i$ holomorphic on $X\stimes\CC^N$ and fiberwise  homogeneous (shrink $X$ if necessary). On the regular part $X' := X\minus A$ of $S$, we get $S_{X'}=E_{X'}$, \ie $g_{i,(p,z)} \in (h)_{(p,z)}\ \forall (p,z)\in X'\stimes\CC^N$. Therefore, $f_i:=g_i/h_i$ is a holomorphic function on $X'\stimes \CC^N$. Since we assumed $X$ to be normal or Cohen-Macaulay and $A$ is of codimension 2 in $X$, $f_i$ can be extended to a holomorphic function on $X\stimes \CC^N$. 
We obtain $g_i \in (h)$ and $E=S$.
Now, \autoref{lem:TorsionImpliesRed} implies the second statement.
\end{proof}

Note that for the proof of \autoref{lem:Hypersurfaces}, we hardly used the fact that $S$ is given by a principal ideal sheaf.
If $S$ is defined by more than two functions while the corank of $S$ is 1,
it can happen that $\sS{=}\sL(S)$ has torsion elements with support on a 2-codimensional set. Since the singular locus of a  torsion-free coherent analytic sheaf on a normal complex space is at least 2-codimensional (see \Cor of \cite[\Lem 1.1.8]{ObonekSchneiderSpindler}), we get the following corollary from \autoref{lem:Hypersurfaces}.

\begin{cor} Let $\sS$ be a torsion-free coherent analytic sheaf on a normal complex space  $X$ such that $\cO_X\abb \cO_X^N \abb \sS \abb 0$ is exact (\ie the homological dimension of $\sS$ is at most 1, see \autoref{lem:corank-criterion-homological-dimension}). Then the linear space associated to $\sS$ is locally irreducible.\end{cor}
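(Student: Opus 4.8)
The plan is to reduce the statement directly to \autoref{lem:Hypersurfaces}, which already contains all the real work. Since local irreducibility is a local property, I would fix a point $p\in X$ and pass to a small connected open neighborhood $U$ on which the given presentation restricts to an exact sequence $\cO_U\overset{\alpha}{\longrightarrow}\cO_U^N\abb\sS_U\abb 0$. Writing $\alpha$ as multiplication by a vector $(a_1,\dots,a_N)$ with $a_j\in\cO(U)$, I would identify $S_U:=L(\sS_U)$ with the closed analytic subspace of $U\stimes\CC^N$ cut out by the single fiberwise linear function $h(x,z):=\sum_{j=1}^N a_j(x)\,z_j$. This is the standard unwinding of the Fischer--Grothendieck correspondence between coherent sheaves and linear spaces: applying $L(-)$ (equivalently, taking $\mathrm{Specan}$ of the symmetric algebra) to the presentation gives $\mathrm{Sym}_{\cO_U}(\sS_U)\iso\cO_U[z_1,\dots,z_N]/(h)$, so that $S_U$ is the zero locus of $h$ with its induced (possibly non-reduced) structure. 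The point of the corank-$\kl 1$ hypothesis (equivalently, homological dimension $\kl 1$ with the minimal number of free generators, via \autoref{lem:corank-criterion-homological-dimension}) is precisely that the defining data of $S$ reduces to one fiberwise linear equation.

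Next I would check that the remaining hypothesis of \autoref{lem:Hypersurfaces} holds automatically. By the convention fixed in \autoref{sec:LinearSpaces}, the singular locus of the linear space $S$ is $\Sing S=\Sing\sS$, the thin analytic set where $\sS$ fails to be locally free. Since $\sS$ is torsion-free and $X$ is normal, the \Cor of \cite[\Lem 1.1.8]{ObonekSchneiderSpindler} gives $\codim_X\Sing\sS\gr 2$, hence also $\codim_U\Sing\sS_U\gr 2$. Moreover $U$, being open in the normal space $X$, is again normal, in particular locally irreducible, so the standing assumptions of the earlier results are in force.

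With this in place, \autoref{lem:Hypersurfaces}, applied with base $U$ to the linear space $S_U\aus U\stimes\CC^N$ (normal base, at least $2$-codimensional singular locus, defined by one fiberwise linear function $h$), yields that $S_U$ is locally irreducible. As $p\in X$ was arbitrary, $S=L(\sS)$ is locally irreducible, which is the claim. I do not expect a genuine obstacle: the torsion-freeness supplies exactly the codimension bound needed in \autoref{lem:Hypersurfaces}, and the corank-one hypothesis is engineered so that $S$ is a ``hypersurface-type'' linear space covered verbatim by that lemma. The only step meriting any care — and the only one I would spell out in full — is the translation from ``the relations in the presentation are generated by a single section'' to ``$S$ is cut out in $U\stimes\CC^N$ by one fiberwise linear function,'' after which the corollary is immediate.
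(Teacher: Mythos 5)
Your argument is correct and is exactly the paper's intended proof: the presentation $\cO_U\to\cO_U^N\to\sS_U\to 0$ realizes $S_U$ as the zero set of the single fiberwise linear function $h=\sum a_jz_j$ in $U\stimes\CC^N$, torsion-freeness on the normal space $X$ gives $\codim\Sing\sS\gr 2$ via the cited Corollary of \cite[\Lem 1.1.8]{ObonekSchneiderSpindler}, and then \autoref{lem:Hypersurfaces} applies verbatim. The paper states the corollary as an immediate consequence of precisely these two ingredients, so there is nothing to add.
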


For a sheaf $\sS$ of homological dimension one, but with arbitrary corank,
the associated linear space $L(\sS)$ is not necessarily locally irreducible (see \autoref{rem:CEIrred}, where the ideal sheaf is of homological dimension one and of corank two).
If $\codim \{x\colon \cork_x\sS\gr k\} > k$ for all $k\,{\gr}\,1$, then $L(\sS)$ is irreducible (see \Prop 3.11 in \cite{Axelsson-Magnusson98}). But, it is not clear whether $L(\sS)$ is reduced. At least, we can prove that $\sS$ is torsion-free:

\begin{lem}\label{lem:homological-torsion} Let $X$ be a normal or Cohen-Macaulay space, and let $\sS$ be a coherent analytic sheaf on $X$ such that the homological dimension of $\sS$ is at most one and  $\Sing \sS$ has at least codimension 2 in $X$. Then $\sS$ is torsion-free.\end{lem}
\begin{proof}
Let $A$ denote the singular locus of $\sS$. By \autoref{lem:corank-criterion-homological-dimension}, every point $p\in X$ has a neighborhood $U$ such that
\begin{eqnarray}\label{eq:new-alpha}
	0\abb \cO_U^{m}\overset{\alpha}\longrightarrow \cO_U^{N} \abb \sS_U\abb 0,
\end{eqnarray}
with $m:=\cork_p\sS$ and $N:=\rk \sS+m$.
Therefore, the associated linear space $S:=L(\sS_U)$ is defined by $m$ holomorphic functions $h_j$ on $U\stimes\CC^N$ which are fiberwise linear.
Let $s$ be a torsion section of $\sS$, \ie there exists an $r \in\cO^\ast_X(U)$ with $r s= 0$ for small enough $U$.
We can understand $s(x,z)=\sum_{i=1}^{N} s_i(x) z_i$ as fiberwise linear holomorphic function on $U_x\stimes\CC_z^{N}$,
and will show that $s$ is zero on $S$, \ie $s$ is in the ideal sheaf generated by $h_j$, $j=1,...,m$:

\spar
$rs=0$ on $S$ implies that there exist $m$ holomorphic functions $a_j$ on $U$ (shrinking $U$ if necessary) such that	
	\[rs=\sum\!{\big.}_{j=0}^m\, a_j h_j.\]
Since $s$ is represented by the tuple $(s_i)$ in $\cO_U^{N}$, $(a_j)\in\cO^m_U$ is the preimage $\alpha^{-1}(r\!\cdot\!(s_i))=r\alpha^{-1}((s_i))$
under $\alpha$ in \eqref{eq:new-alpha}. $(a_j)$ is uniquely determinate since $\alpha$ is injective. The support of $\sT(\sS)$ is contained in $A$, \ie $s=0$ on $S$ over $U':=U\minus A$. In particular, for all $x\in U'$, there exists the decomposition
	\[s_x=\sum\!{\big.}_{j=0}^m\, b_{j,x} h_{j,x}.\]
A priori, $b_j$ depends on $x$. Yet, since they are uniquely determined ($\alpha$ is injective), they are independent and exist on $U'$. In particular, $b_j= \frac{a_j}{r}$ on $U'$. Since $A$ is at least $2$ codimensional and $U\aus X$ is normal or Cohen-Macaulay, $\frac{a_j}{r}$ are holomorphic maps on $U$ (see \eg \autoref{rem:CohenMacaulay} (ii)). Hence,
	\[s =\sum\!{\big.}_{j=0}^m\, \hbox{$\frac{a_j}{r}$} h_j,\]
as desired.
\end{proof}

\bpar
We call a normal complex space \newterm{factorial} if its structure sheaf is factorial (also called unique factorization domain). In this case,
hypersurfaces are (locally) given as the zero set of one holomorphic function. The simplest examples for factorial spaces are manifolds.

\begin{thm}\label{thm:IrredPrincipalIdeal}
Let $S$ be a linear space over a factorial complex space $X$ which is locally defined by one holomorphic fiberwise linear function in $X\stimes \CC^{\rank S +1}$ (\ie the associated ideal sheaf is a principal ideal sheaf). Then the primary component of $S$ is a linear space.\end{thm}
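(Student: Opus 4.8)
The plan is to argue locally on $X$ and to exploit unique factorisation in the local rings of $X$ in order to split $S$ into its primary component and a spurious part that the primary component discards. Fix $p\in X$ and a small neighbourhood $U$ on which $S$ is cut out inside $U\stimes\CC^{r+1}$, $r:=\rank S$, by a single fiberwise linear function $h=\sum_{j=1}^{r+1}a_j(x)\,z_j$ with $a_j\in\cO_X(U)$; shrinking $U$, we may assume that $X$ is connected, hence irreducible, that $\cO_X(U)$ is a unique factorisation domain, and that $h\ungl0$ (if $h\id0$, then $\rank S=r+1$). Since $\cO_X(U)$ is a UFD, after a further shrinking we may write $a_j=d\cdot b_j$, where $d$ is a greatest common divisor of $a_1,\dots,a_{r+1}$ and $b_1,\dots,b_{r+1}$ have no common factor; put $h':=\sum_j b_j\,z_j$, so that $h=d\cdot h'$ with $h'$ again fiberwise linear. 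I will prove that the primary component $E=\PC(S)$ is exactly the zero set $V(h'):=\{h'=0\}$; since $h'$ is fiberwise linear, $V(h')$ is a linear subspace of $U\stimes\CC^{r+1}$, and then, as in the proof of \autoref{lem:Homogeneous} (cf. \Lem 1 of \cite{Fischer67}), $E$ is a linear space.

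The first and only substantial step is that $h'$ is irreducible in $\cO_{U\stimes\CC^{r+1},(p,0)}$. Consider a factorisation $h'=f\cdot g$. Putting $z=0$ and using that $\cO_{X,p}$ is an integral domain, one of the two factors, say $g$, vanishes on $\{z=0\}$, so $g=\sum_j z_j\,\tilde g_j$ for suitable germs $\tilde g_j$. Comparing, for each $j$, the coefficient of $z_j$ in the degree-one-in-$z$ parts of $h'=f\cdot\sum_j z_j\tilde g_j$, one gets $b_j=f(\cdot,0)\,\tilde g_j(\cdot,0)$; since the $b_j$ have no common factor, $f(\cdot,0)$ is a unit in $\cO_{X,p}$, hence $f(p,0)\ungl0$ and $f$ is a unit. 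Therefore $(h')$ is a prime ideal, and $V(h')$ is an irreducible, reduced analytic subset of $U\stimes\CC^{r+1}$.

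It remains to identify $E$ with $V(h')$. Over $A^c=X\minus A$, where not all of the $a_j$ vanish, $d$ is nowhere zero, so $S_{A^c}=V(h')\cap\pr^{-1}(A^c)$, which is a non-empty open---hence dense---subset of the irreducible set $V(h')$. Since $S_{A^c}$ is open in $S$ and, by the definition of the primary component, is contained in the irreducible component $E$ of $\red S$, we obtain $E\um\overline{S_{A^c}}=V(h')$. Conversely, set-theoretically $S=V(h)=\big(\{d=0\}\stimes\CC^{r+1}\big)\cup V(h')$, and $V(h')$ is not contained in $\{d=0\}\stimes\CC^{r+1}$ because it has points lying over $\{d\ungl0\}$ (which is non-empty since $h\ungl0$); as $E$ is irreducible and $E\um V(h')$, it follows that $E\aus V(h')$. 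Hence $E=V(h')$, and $E$ is a linear space. The remaining points are routine: that a gcd of the $a_j$ extends over a neighbourhood of $p$ (Noetherianity of $\cO_{X,p}$ together with coherence), and that the closure of the non-empty open subset $S_{A^c}$ of the irreducible set $V(h')$ is all of $V(h')$. The place where factoriality is genuinely needed is the extraction of the common divisor $d$---the divisor producing the extra component $\{d=0\}\stimes\CC^{r+1}$---which reduces the problem to the case of an irreducible defining function; over a merely normal base such a $d$ need not exist, which is why \autoref{lem:Hypersurfaces} instead imposes a codimension hypothesis on the singular locus.
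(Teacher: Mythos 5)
Your route is genuinely different from the paper's: the paper notes that $E=\PC(S)$ is an irreducible hypersurface, uses factoriality of $X\stimes\CC^N$ to write $\sJ_E=(g)$, takes $g$ fiberwise homogeneous via \autoref{lem:Homogeneous}, and concludes from $g\mid h$ that $g$ is fiberwise linear; you instead extract the gcd of the coefficients, $h=d\,h'$, prove irreducibility of the primitive part $h'$ by a Gauss-type computation, and try to identify $E$ with $V(h')$. The idea is attractive, but as written there is a genuine gap: all your key facts are proved only for the germ at the single point $(p,0)$, while the statements you then use are statements over $U\stimes\CC^{r+1}$. Primality of $(h')_{(p,0)}$ does not give irreducibility of the analytic set $V(h')$ over $U$, which your density argument $\overline{S_{A^c}}=V(h')$ needs; worse, coprimality of the $b_j$ at $p$ need not persist on $U$: for $X=\CC^3_{x,y,w}$, $p=0$, $h=x(y-1)z_1+w(y-1)z_2$ one has $d=1$, $h'=h$, the $b_j$ coprime at $p$, but over any $U$ meeting $\{y=1\}$ the set $V(h')$ contains the vertical component $(\{y{=}1\}\cap U)\stimes\CC^2$, is reducible, $(h')$ is not prime there, and $E\cap\pr^{-1}(U)\echtaus V(h')\cap\pr^{-1}(U)$. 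So you must first shrink $U$ so that $\{b_1=\dots=b_{r+1}=0\}$ has codimension $\gr 2$ at every point (possible, as its codimension-one components miss $p$), and then prove irreducibility of $V(h')$ over $U$ by a separate argument (e.g.\ over the complement of that set $V(h')$ is a rank-$r$ subbundle of $U\stimes\CC^{r+1}$ over a connected base, and no component of the hypersurface $V(h')$ can lie over a codimension-two subset of $U$); the one-point computation does not deliver this.

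Second, and more fundamentally, the conclusion ``$E=V(h')$, hence $E$ is a linear space'' conflates the set $V(h')$ with the complex subspace it defines. $E$ carries the reduced structure, and to conclude via \Lem 1 of \cite{Fischer67} you need its ideal sheaf to be generated by fiberwise linear functions at every point over $U$, i.e.\ $\sJ_E=(h')$ everywhere, which amounts to $(h')$ being radical along all of $E$ and not only prime at $(p,0)$. Set-theoretic equality together with linear fibers is precisely what is not sufficient in this circle of questions (\cf \autoref{rem:CEIrred} and the footnote on Rabinowitz). This is repairable -- run the Gauss argument at every point $(q,0)$ of the zero section of the shrunken $U$ and propagate $\sJ_E=(h')$ along the fibers using the fiberwise homogeneity from \autoref{lem:Homogeneous}, or fall back on the paper's principality argument -- but it is missing. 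Two smaller points: ``after shrinking, $\cO_X(U)$ is a UFD'' is false (rings of sections are essentially never factorial); the correct statement is the one you add later, that the gcd taken in the UFD $\cO_{X,p}$ extends to a neighborhood by coherence. And the closing claim that factoriality enters only through the extraction of $d$ overstates the case: the step ``irreducible $\Rightarrow$ prime'' for $h'$ takes place in $\cO_{X\stimes\CC^{r+1},(p,0)}$ and uses factoriality of the product, the same fact the paper invokes.
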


\begin{proof}
Let $S\aus X\stimes \CC^N$  be given by the fiberwise linear $h\in\cO(X\stimes \CC^N)$.
The primary component $E$ of $S$ is an irreducible hypersurface. Since $X$ (and, hence, $X\stimes \CC^N$) is factorial, the ideal sheaf $\sJ_E$ is generated by one element $g$. By \autoref{lem:Homogeneous}, we get $g$ is fiberwise homogeneous. Moreover, $g$ divides $h$. Hence, it has to be fiberwise linear. This implies that $E$ is a linear space.
\end{proof}

\begin{lem}\label{lem:IntersectionOfIrreducibleHypersurfaces}
Let $X$ be a factorial complex space and $S\aus X\stimes\CC^N$ a linear space associated to a torsion-free coherent analytic sheaf on $X$.
Then $S$ can be defined by locally irreducible holomorphic fiberwise linear functions.\end{lem}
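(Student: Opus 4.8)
The plan is to start from an arbitrary fiberwise linear presentation of $S$ and ``clear common factors'' from the defining functions, using the factoriality of $X$ to take greatest common divisors and the torsion-freeness of the underlying sheaf to make sure that this clearing does not destroy the presentation. Write $S=L(\sS)$ with $\sS$ torsion-free. Following Fischer \cite{Fischer67}, the embedding $S\aus X\stimes\CC^N$ corresponds, locally over $X$, to a surjection $\cO_X^N\twoheadrightarrow\sS$, and, denoting its kernel by $\sK\aus\cO_X^N$, the ideal sheaf of $S$ in $\cO_{X\stimes\CC^N}$ is generated by the fiberwise linear functions $\sum_i c_i z_i$ with $(c_1,\dots,c_N)\in\sK$. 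So it suffices to produce, locally near each $p\in X$, a generating system of $\sK_p$ whose members are \emph{primitive}, \ie have entries without common non-unit factor in $\cO_{X,p}$.

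Fix $p\in X$ and pick generators $v_1,\dots,v_k$ of $\sK_p$, which we may assume nonzero; write $v_j=(c_{j1},\dots,c_{jN})$. Since $X$ is factorial, $\cO_{X,p}$ is a unique factorisation domain, so $d_j:=\gcd(c_{j1},\dots,c_{jN})\ungl 0$ exists and $v_j':=d_j^{-1}v_j\in\cO_{X,p}^N$ is primitive. The key point is that $v_j'\in\sK_p$: if $s_j\in\sS_p$ is the image of $v_j'$, then $d_j s_j$ is the image of $v_j$, hence $d_j s_j=0$; as $\sS$ is torsion-free and $d_j\ungl 0$, we get $s_j=0$. Therefore $\langle v_1',\dots,v_k'\rangle\aus\sK_p$, while $v_j=d_j v_j'$ gives the reverse inclusion, so $v_1',\dots,v_k'$ is again a generating system of $\sK_p$, hence of $\sK$ over a small enough neighbourhood $U\ni p$ on which all data are defined. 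The associated fiberwise linear functions $h_j:=\sum_i c_{ji}' z_i$ then generate the ideal sheaf of $S$ over $U$, and each $h_j$ is primitive.

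It remains to note that a primitive fiberwise linear function $h=\sum_i c_i z_i$ is locally irreducible. This uses that $X\stimes\CC^N$ is again factorial (as in \autoref{thm:IrredPrincipalIdeal}) together with the homogeneity trick already employed in the proof of \autoref{lem:Homogeneous}: with respect to the $\CC^\ast$-action $\lambda\cdot(x,z)=(x,\lambda z)$ the function $h$ is homogeneous of weight one, and in the factorial local ring $\cO_{X\stimes\CC^N,(p,0)}$ a factorisation $h=fg$ into non-units forces $f$ and $g$ (after correcting by units) to be homogeneous of non-negative weights summing to one; hence one factor, say $g$, has weight zero, \ie $g\in\cO_{X,p}$, and being a non-unit $g$ then divides every coefficient of $h$, contradicting primitivity. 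Thus each $h_j$ is locally irreducible, and $S$ is defined over $U$ by the locally irreducible fiberwise linear functions $h_1,\dots,h_k$.

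The torsion-free step is short, but it is exactly where the hypothesis enters: the counter-example \autoref{rem:CEIrred} shows that without torsion-freeness one cannot clear common factors while staying inside $\sK$. The main technical point is therefore the last paragraph, namely the (routine but slightly fiddly) equivalence between primitivity of the coefficient vector and local irreducibility of the fiberwise linear function; granting this, the lemma follows.
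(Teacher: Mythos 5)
Your core mechanism is sound and, in substance, lands on the same objects as the paper, by a more algebraic route. The paper starts from an arbitrary fiberwise linear system $h_1,\dots,h_m$, replaces each $h_i$ by the fiberwise linear generator $g_i$ of the ideal of $\PC(\{h_i=0\})$ furnished by \autoref{thm:IrredPrincipalIdeal} (so $g_i\mid h_i$), and then uses torsion-freeness via \autoref{rem:CharTFWithPC} to conclude $g_i\in(h_1,\dots,h_m)$. Your gcd-clearing of the syzygy vectors plays exactly the role of that last step, and your use of torsion-freeness ($d_j s_j=0$ with $d_j\ungl 0$ forces $s_j=0$) is correct; moreover your primitivized $h_j'$ agree up to units with the paper's $g_j$, since a fiberwise linear divisor of $h_j$ differs from $h_j$ only by a $z$-degree-zero factor, which must then divide all coefficients. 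One small repair: the parenthetical claim that any factorization $h=fg$ can be corrected by units so that both factors become homogeneous is more than you need and is not justified as written; the clean argument is to compare lowest-order terms in $z$ (the coefficient ring $\cO_{X,p}$ is a domain, so the $z$-orders of $f,g$ add to one, and the degree-one part gives $h=f_1g_0$ with $g_0\in\cO_{X,p}$; if $g$ is a non-unit at $(p,0)$ then $g_0$ is a non-unit dividing every coefficient of $h$, contradicting primitivity).

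The genuine gap is in the last step: your computation establishes irreducibility of the germ of $h_j$ only at points $(p,0)$ of the zero section, while the way the lemma is consumed later (in \autoref{thm:IrredCorank2}, where $S_j=\{h_j=0\}$ has to be treated as a locally irreducible complex space) requires control at all points of $S_j$, in particular at points $(p,z_0)$ with $z_0\ungl 0$ lying over $\Sing\sS$. Primitivity of the coefficient vector does not control such germs by a local computation: for instance $h=x^2z_1-y^2z_2$ is primitive and its germ at the origin is irreducible, but near $(0,0,1,1)$ it factors as $(x\sqrt{z_1}-y\sqrt{z_2})(x\sqrt{z_1}+y\sqrt{z_2})$. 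This shows that the local irreducibility asserted in the lemma cannot be extracted from primitivity alone at the zero section; in the paper it is obtained globally, by identifying $\{g_j=0\}$ with the primary component $\PC(\{h_j=0\})$ and invoking \autoref{thm:LocallyIrredOfPC} (which goes through the monoidal transformation), together with \autoref{thm:IrredPrincipalIdeal}. To complete your argument in the form the paper needs, add the observation that $\{h_j'=0\}=\PC(\{h_j=0\})$ (immediate once you know $h_j'$ is, up to a unit, the generator $g_j$ of the ideal of the primary component) and then quote \autoref{thm:IrredPrincipalIdeal} and \autoref{thm:LocallyIrredOfPC}; at that point your proof coincides with the paper's, with your gcd step serving as a neat substitute for the torsion-freeness argument via \autoref{rem:CharTFWithPC}.
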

\begin{proof} Let the linear space $S$ be defined by fiberwise linear $h_1,.., h_m {\in}\cO(X\stimes\CC^N)$. Let $S_i:=\PC(\{h_i=0\})$ be defined by the fiberwise linear $g_i\in\cO(X\stimes\CC^N)$ (using \autoref{thm:IrredPrincipalIdeal}).  We will prove $S=\bigcap S_i$, \ie $(h_i)_{i=1}^m=(g_j)_{j=1}^m$:\\
Since $g_j|h_j$, we get $(h_i)_{i=1}^m\subset(g_j)_{j=1}^m$.  On the other hand, $g_j$ vanishes on $S_j$. Hence, it vanishes on $\PC(S)$, as well. Since the coherent analytic sheaf $\sL(S)$ associated to $S$ is torsion-free, we get $g_j=0$ on $S$ (in the non-reduced sense; see \autoref{rem:CharTFWithPC}), \ie $g_j\in (h_i)_{i=1}^m$.\end{proof}

\spar
\begin{thm}\label{thm:IrredCorank1} Let $\sS$ be a torsion-free coherent analytic sheaf of corank 1 on a factorial complex space $X$. Then the linear space associated to $\sS$ is locally irreducible and, for small enough open $U\aus X$, there exists an exact sequence
	\[0\abb \cO_U \abb \cO_U^{\rank \sS+1}\abb\sS_U\abb 0,\]
\ie the homological dimension of $\sS$ is at most 1 (see \autoref{lem:corank-criterion-homological-dimension}).\end{thm}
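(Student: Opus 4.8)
The plan is to reduce both assertions to a single geometric fact: locally on $X$, the linear space $S=L(\sS)$ is the zero set of one irreducible fiberwise linear function. Fix $p\in X$. Since the global corank is $1$, either $\sS$ is locally free near $p$, in which case the claim is trivial, or $\cork_p\sS=1$; assume the latter. Choose a neighbourhood $U\aus X$ and $N:=\rank\sS+1$ generators of $\sS_U$, realizing $S_U$ as a closed linear subspace of $U\stimes\CC^N$. Because $\sS$ is torsion-free and $X$ is factorial, \autoref{lem:IntersectionOfIrreducibleHypersurfaces} lets us assume $S_U$ is defined by locally irreducible fiberwise linear functions $g_1,\dots,g_m\in\cO(U\stimes\CC^N)$, none of them identically zero; write $\rho_j\in\cO_U^N$ for the coefficient vector of $g_j$, so $g_j(x,z)=\sum_i\rho_{j,i}(x)\,z_i$.

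The heart of the argument uses the corank hypothesis. Over the dense open set $X\minus\Sing\sS$ the sheaf $\sS$ is locally free of rank $\rank\sS=N-1$, so there $S$ is a rank-$(N-1)$ subbundle of the trivial bundle $U\stimes\CC^N$; i.e.\ each fibre $S_x$ is a hyperplane in $\CC^N$. Hence for $x$ outside the proper analytic set $\Sing\sS\cup\bigcup_j\{\rho_{j,1}=\dots=\rho_{j,N}=0\}$, all $g_j(x,\cdot)$ are nonzero linear forms whose common zero set is the hyperplane $S_x$; an intersection of hyperplanes that is itself a hyperplane forces them to coincide, so $\{g_j=0\}$ agrees with $\{g_1=0\}$ over a dense open subset of $U$. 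These being locally irreducible hypersurfaces in the unique factorization domain $\cO(U\stimes\CC^N)$, the $g_j$ are pairwise associates, whence $(g_1,\dots,g_m)=(g_1)$. Thus $S_U$ is cut out by the single irreducible fiberwise linear function $g_1$.

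Both conclusions follow. As $g_1$ is (locally) irreducible in a unique factorization domain, $(g_1)$ is locally prime, so $S_U=\{g_1=0\}$ is reduced and irreducible; letting $p$ vary, $L(\sS)$ is locally irreducible. Moreover, under the isomorphism of $\cO_U^N$ with the fiberwise linear functions on $U\stimes\CC^N$, the kernel $\sR:=\sKer(\cO_U^N\abb\sS_U)$ corresponds to those functions lying in the defining ideal of $S_U$; comparing degree-one parts in the fibre coordinate, the fact that this ideal is the principal ideal $(g_1)$ forces $\sR=\cO_U\cdot\rho_1$. Since $\rho_1\neq0$ and $\cO_U$ has no zero divisors ($X$ is factorial, hence locally irreducible), $\sR\iso\cO_U$ and we obtain
\[0\abb\cO_U\overset{\rho_1}{\abb}\cO_U^{\rank\sS+1}\abb\sS_U\abb0,\]
the desired resolution; that the homological dimension of $\sS$ is at most $1$ is then \autoref{lem:corank-criterion-homological-dimension}. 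Most of the work is packaged in \autoref{lem:IntersectionOfIrreducibleHypersurfaces}; the genuinely new steps are the generic hyperplane coincidence (whose upgrade to "associates" is where factoriality is indispensable) and the degree comparison identifying $\sR$ with the principal module $\cO_U\rho_1$ — the expected, if modest, main obstacle.
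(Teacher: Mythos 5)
Your proof is correct, and it reaches the same structural conclusion as the paper -- namely that, thanks to \autoref{lem:IntersectionOfIrreducibleHypersurfaces}, the corank-one hypothesis forces the defining ideal of $S=L(\sS)$ in $U\stimes\CC^{N}$, $N=\rank\sS+1$, to be generated by a single locally irreducible fiberwise linear function -- but the mechanism you use for that collapse is different. The paper argues via the primary component: $E=\PC(S)$ is an irreducible hypersurface for dimension reasons, hence coincides with each $S_i=\{h_i=0\}$, so $E$ is a linear space, and \autoref{lem:IrredIfPCIsLS} (which is where torsion-freeness re-enters) then yields $S=E=S_i$. You instead look at generic fibers: over $X\minus\Sing\sS$ each fiber $S_x$ is a hyperplane, so all the hyperplanes $\{g_j(x,\cdot)=0\}$ containing it must coincide with it, whence the $\{g_j=0\}$ agree generically and, being locally irreducible elements of a factorial ring, the $g_j$ are pairwise associates; this bypasses \autoref{thm:LocallyIrredOfPC} and \autoref{lem:IrredIfPCIsLS} at this stage (torsion-freeness is still used, but only inside \autoref{lem:IntersectionOfIrreducibleHypersurfaces}) and gives irreducibility of $S$ directly from primality of $(g_1)$. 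One small step you gloss over: passing from ``$\{g_j=0\}$ and $\{g_1=0\}$ agree over a dense open subset of $U$'' to ``associates'' requires that the good locus be dense in each hypersurface $\{g_j=0\}$; this is easily supplied (the zero section $U\stimes\{0\}$ lies in every $\{g_j=0\}$ but not in $Z\stimes\CC^N$, and each $\{g_j=0\}$ is irreducible, so apply the identity theorem), but it should be said. Your explicit degree-one comparison identifying $\sKer(\cO_U^N\abb\sS_U)$ with $\cO_U\cdot\rho_1$ spells out what the paper leaves implicit when it writes down the sequence $\cO_U\overset{h_i^\ast}{\longrightarrow}\cO_U^N\abb\sS_U\abb 0$, and your direct injectivity argument replaces the paper's appeal to \autoref{lem:corank-criterion-homological-dimension} or \autoref{lem:Monomorphism}; both are fine. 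What your route buys is independence from the primary-component machinery in the final step; what it costs is the extra generic-fiber bookkeeping that the paper's one-line dimension count for $E$ avoids.
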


\begin{proof}
\autoref{lem:IntersectionOfIrreducibleHypersurfaces} implies, for small enough open $U\aus X$, that the linear space $S$ associated to $\sS_U$ can be defined by irreducible fiberwise linear $h_1,...,h_m \in \cO(U\stimes\CC^N)$ with $N=\rank \sS+1$. Yet, the primary component $E$ of $S$ is already an irreducible hypersurface in $U\stimes\CC^N$. Hence, $E$ coincides with $S_i:=\{h_i=0\}$ and is a linear space. \autoref{lem:IrredIfPCIsLS} implies $S=E=S_i$.
\spar
We obtain the exact sequence $\cO_U \overset{h_i^\ast}{\longrightarrow} \cO_U^{N}\abb\sS_U\abb 0$. 
\autoref{lem:corank-criterion-homological-dimension}, \spimply12 or \autoref{lem:Monomorphism} give the injectivity of $h_i^\ast$.
\end{proof}

\mpar
Let us generalize this for sheaves with corank 2:

\begin{thm}\label{thm:IrredCorank2}
Let $X$ be a factorial Cohen-Macaulay space and let $S$ be a linear space of corank 2 on  $X$ such that $\Sing S$ has at least codimension 3 in $X$ and the coherent analytic sheaf $\sL(S)$ associated to $S$ is torsion-free. Then $S$ is locally irreducible.
\end{thm}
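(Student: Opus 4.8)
The plan is to follow the same strategy that worked for corank $1$ in \autoref{thm:IrredCorank1}, but now one extra layer deep. By \autoref{lem:IrredIfPCIsLS} and \autoref{thm:LocallyIrredOfPC}, it suffices to show that the primary component $E$ of $S$ is a linear space; then $E=S$ and local irreducibility follows. So the real content is: an irreducible linear-fibered hypersurface-like object of the right dimension must be honestly linear in the fibers.

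First I would record the dimension count. Since $\corank S = 2$ with $N = \rank S + 2$, the primary component $E\aus U\stimes\CC^N$ has dimension $\dim X + \rank S = \dim(U\stimes\CC^N) - 2$, i.e. $E$ has codimension $2$ in $U\stimes\CC^N$. By \autoref{lem:IntersectionOfIrreducibleHypersurfaces} we may write $S = \bigcap_i S_i$ with each $S_i = \{h_i = 0\}$ an irreducible fiberwise linear hypersurface; in particular $E\aus S_i$ for each $i$, so $E$ sits inside the intersection of irreducible linear hypersurfaces. The goal is to realize $E$ as a complete intersection of exactly two of these linear hypersurfaces, since the intersection of two fiberwise linear functions is again a linear space. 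To get this we want to invoke that $X$ (hence $U\stimes\CC^N$) is factorial and Cohen--Macaulay, that $E$ is irreducible of codimension exactly $2$, and that the ambient singular locus we must avoid is at least $3$-codimensional in $X$ — so codimension at least $3$ in $U\stimes\CC^N$ along the zero section direction — which leaves enough room. Concretely: pick $h_1$ irreducible vanishing on $E$; then $E\aus\{h_1=0\}$, and $E$ is a codimension-$1$ irreducible subset of the normal (Cohen--Macaulay, $2$-codimensional singular locus) hypersurface $\{h_1 = 0\}$, hence is a divisor there; by factoriality of the ambient space and Cohen--Macaulayness one argues as in \autoref{lem:Hypersurfaces} that the ideal of $E$ in $\{h_1=0\}$ is principal, generated by the image of some $h_j$; combining, $\sJ_E = (h_1, h_j)$. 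By \autoref{lem:Homogeneous} the generators may be taken fiberwise homogeneous, and since $E$ is linear over the dense open $X\minus A$, the two generators are forced to be fiberwise linear (a homogeneous generator of a linear ideal in the right number of variables must be degree $1$).

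With $\sJ_E = (h_1, h_j)$ and both generators fiberwise linear, $E$ is by definition a linear space, and \autoref{lem:IrredIfPCIsLS} then gives $S = E$, so $S$ is locally irreducible. I would organize the extension-of-holomorphic-functions step exactly as in \autoref{lem:Hypersurfaces}: on $X' = X\minus(A\cup X_{\sing})$ one has $S = E$, so the extra defining functions $h_k$ lie in the ideal $(h_1, h_j)$ there, and the cofactors are holomorphic on $X'\stimes\CC^N$; since $A$ has codimension $\gr 3 \gr 2$ and $X$ is normal or Cohen--Macaulay, \autoref{rem:CohenMacaulay}(ii) extends them, yielding $h_k\in(h_1,h_j)$ globally.

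The main obstacle, and the reason the corank-$2$ case needs the stronger hypotheses (Cohen--Macaulay, $3$-codimensional singular locus) rather than just factoriality, is the middle step: showing that the ideal of the irreducible codimension-$1$ subvariety $E$ of the hypersurface $H = \{h_1 = 0\}$ is principal on $H$. Factoriality of the ambient space does not immediately pass to $H$; one needs $H$ to be normal (guaranteed because $\Sing H$ is $\gr 2$-codimensional in a Cohen--Macaulay space, \autoref{rem:CohenMacaulay}(i),(iii)) together with a divisor-class argument, essentially solving $g_j/h_1$-type quotients and extending across the thin bad locus using the codimension-$3$ hypothesis exactly as in the proof of \autoref{lem:Hypersurfaces}. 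Making this division/extension argument precise — i.e. verifying that the cofactors are holomorphic off a set that is small enough for \autoref{rem:CohenMacaulay}(ii) to apply on $U\stimes\CC^N$ — is the only delicate point; everything else is the linear-space bookkeeping already set up in \autoref{lem:Homogeneous}, \autoref{lem:IrredIfPCIsLS}, and \autoref{lem:IntersectionOfIrreducibleHypersurfaces}.
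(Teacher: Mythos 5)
Your skeleton is the same as the paper's: reduce via \autoref{lem:IrredIfPCIsLS} (and \autoref{thm:LocallyIrredOfPC}) to showing that the primary component $E$ is a linear space, use \autoref{lem:IntersectionOfIrreducibleHypersurfaces} to take the defining $h_i$ irreducible, and aim at $\sJ_E=(h_1,h_2)$, so that $E$ is the complete intersection of two fiberwise linear hypersurfaces. But the crucial step, actually proving $\sJ_E\aus(h_1,h_2)$, is exactly what you leave open, and the two mechanisms you sketch for it do not work as stated. First, ``$E$ is a divisor on the normal hypersurface $H=\{h_1=0\}$, hence its ideal in $H$ is principal'' is not available: normality (or Cohen-Macaulayness) of $H$ does not make divisors on $H$ principal, and, as you yourself note, factoriality does not descend to $H$, so there is no divisor-class argument to fall back on. Second, your concrete fallback extends ``the cofactors'' of a relation over the ambient space: you argue with $h_k\in(h_1,h_j)$ over $X'$ (this is anyway the wrong family of functions --- one must treat the fiberwise homogeneous generators $g_j$ of $\sJ_E$ provided by \autoref{lem:Homogeneous}, since $h_k\in\sJ_E$ is trivial and $\sJ_S\aus(h_1,h_j)$ alone does not make $E$ linear) and want to extend the cofactors across $A\stimes\CC^N$ by \autoref{rem:CohenMacaulay}(ii) applied on $U\stimes\CC^N$. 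The cofactors $a,b$ in $g=a h_1+b h_2$ are neither unique nor canonically defined (they exist only locally and only modulo the syzygy), so ``extending them'' is not a well-posed operation; moreover, an extension argument on the ambient space would already go through with $\codim A\gr 2$, which shows it cannot be the place where the codimension-$3$ hypothesis is really used.

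The missing idea is to divide on the hypersurface itself. On $S_1=\{h_1=0\}$ (irreducible after \autoref{lem:IntersectionOfIrreducibleHypersurfaces}, hence Cohen-Macaulay by \autoref{rem:CohenMacaulay}(i)), the cofactor of $h_2$ becomes canonical: with $h_2\notin(h_1)$, the set $S_{12}:=S_1\cap S_2$ is a linear space of the same rank as $S$; over $U'=U\minus A$ one has $E=S_{12}$, hence $g_j\in(h_1,h_2)$ there, and therefore $g_j/h_2$ is a well-defined holomorphic function on $S_{1,U'}$. Now
\[\codim_{S_1}\big((A\stimes\CC^N)\cap S_1\big)\;\gr\;\codim_{U\stimes\CC^N}(A\stimes\CC^N)-1\;\gr\;2,\]
so \autoref{rem:CohenMacaulay}(ii) applied to the Cohen-Macaulay space $S_1$ (not to $U\stimes\CC^N$) extends $g_j/h_2$ to all of $S_1$, giving $g_j\in(h_1,h_2)$ globally, \ie $E=S_{12}$ is a linear space, and \autoref{lem:IrredIfPCIsLS} finishes. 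This is precisely where $\codim\Sing S\gr 3$ is consumed: one codimension is lost in restricting to $S_1$, which is why the corank-one argument of \autoref{lem:Hypersurfaces} cannot simply be quoted ``one level deeper'' but has to be rerun on the hypersurface $S_1$.
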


\begin{proof} 
The proof is similar to the proof of \autoref{lem:Hypersurfaces}.
Let $S\aus U\stimes \CC^N$ be defined by the fiberwise linear $h_1,..., h_m\in\cO(U\stimes\CC^N)$ for an open subset $U\aus X$ with $N=2{+}\rank\sS$.
Because of \autoref{lem:IntersectionOfIrreducibleHypersurfaces}, we can assume that $S_i:=\{h_i=0\}$ is locally irreducible. In particular, $S_i$ is Cohen-Macaulay. Let us assume $h_1, h_2 \ungl 0$ and $h_2\notin (h_1)$. Since $h_1$ is irreducible, $S_{12}:=S_1\cap S_2$ is a linear space with the same rank as $S$. We will prove that $S_{12}$ coincides with $E:=\PC(S)$:
\spar
By \autoref{lem:Homogeneous}, $E$ is defined by $g_1,...,g_k$ (in particular, $h_i \in (g_1,...,g_k)$). Let $A$ denote the singular locus of $S$ and $U' = U\minus A$. Since $E\aus S_{12}$ and $\dim E_{U'} = \dim S_{12, U'}$, we get $E= S_{12}$ over $U'$. Hence, we obtain $g_{j,(p,z)}\in (h_1,h_2)_{(p,z)}$ for all $(p,z)\in S_{1,U'}$. This means $\frac{g_j}{h_2}$ is a holomorphic function on $S_{1,U'}$. By assumption, $\codim_{S_1} ((A{\stimes} \CC^N)\cap S_1) \gr \codim_{U\stimes \CC^N} (A\stimes \CC^N)-1 \gr 2$. Taking into account that $S_1$ is Cohen-Macaulay,
we conclude that $\frac{g_j}{h_2}$ is holomorphic on $S_{1}$ and, hence, $g_j \in (h_1,h_2)$, \ie $E=S_{12}$.
\autoref{lem:IrredIfPCIsLS} implies the claim.
\end{proof}

\begin{cor}\label{cor:IrredCorank2}
Let $X$ be a factorial Cohen-Macaulay space and $\sS$ be a torsion-free coherent analytic sheaf $\sS$ of corank 2 on $X$ with at least 3\nbd codi\-men\-sion\-al singular locus. Then the linear space associated to $\sS$ is locally irreducible and, for small enough open $U\aus X$, there exists an exact sequence
	\[0\abb \cO_U^2 \abb \cO_U^{\rank \sS{+}2}\abb\sS_U\abb 0,\]
\ie the homological dimension of $\sS$ is at most 1.\end{cor}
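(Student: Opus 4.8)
The plan is to obtain the corollary by combining \autoref{thm:IrredCorank2} with the presentation mechanism already used in the proofs of \autoref{thm:IrredCorank1} and \autoref{lem:corank-criterion-homological-dimension}. Set $S:=L(\sS)$. By the (contravariant) equivalence between coherent analytic sheaves and Fischer's linear spaces we have $\sL(S)\iso\sS$, so $S$ is a linear space of corank $2$ over the factorial Cohen-Macaulay space $X$ with $\Sing S=\Sing\sS$ of codimension at least $3$, and $\sL(S)$ is torsion-free. Hence \autoref{thm:IrredCorank2} applies and yields that $S=L(\sS)$ is locally irreducible, which is the first assertion.

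For the exact sequence I would work locally at a point $p\in X$ and distinguish cases according to $\cork_p\sS$. If $\cork_p\sS\kl 1$, then \autoref{thm:IrredCorank1} provides, after shrinking to a suitable $U$, an exact sequence $0\to\cO_U^{\cork_p\sS}\to\cO_U^{\rank\sS+\cork_p\sS}\to\sS_U\to 0$; adjoining $2-\cork_p\sS$ copies of $\cO_U\overset{\Id}\longrightarrow\cO_U$ as a direct summand to the first two terms turns this into the desired
\[0\to\cO_U^2\to\cO_U^{\rank\sS+2}\to\sS_U\to 0.\]
If $\cork_p\sS=2$, then $\sS$ is minimally generated near $p$ by $N:=\rank\sS+2$ sections, so $S$ is defined in $U\stimes\CC^N$ by fiberwise linear holomorphic functions $h_1,\dots,h_m$; the proof of \autoref{thm:IrredCorank2} shows that (after shrinking $U$) $S=\{h_1=0\}\cap\{h_2=0\}$, that is, the module of relations among the $N$ chosen generators of $\sS_U$ is generated by the two relations corresponding to $h_1$ and $h_2$. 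Via the dictionary between relations among generators and fiberwise linear functions vanishing on $S$, this is precisely a presentation $\cO_U^2\overset{\alpha}\longrightarrow\cO_U^{\rank\sS+2}\to\sS_U\to 0$.

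It then remains to see that $\alpha$ is injective. Since $\rk\big(\cO_U^{\rank\sS+2}/\alpha(\cO_U^2)\big)=\rk\sS_U=\rank\sS=(\rank\sS+2)-2$, the map $\alpha$ is injective at every point where $\sS$ is locally free (a rank count, exactly as in the proof of \autoref{lem:corank-criterion-homological-dimension}), so $\sKer\alpha$ is supported on the thin set $\Sing\sS$ and is therefore a torsion subsheaf of the torsion-free sheaf $\cO_U^2$; hence it vanishes (alternatively, apply \autoref{lem:Monomorphism}). Thus $0\to\cO_U^2\to\cO_U^{\rank\sS+2}\to\sS_U\to 0$ is exact, and by \autoref{lem:corank-criterion-homological-dimension} the homological dimension of $\sS$ is at most $1$. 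I expect the only genuinely delicate point to be the case $\cork_p\sS=2$: there one must invoke the proof of \autoref{thm:IrredCorank2}, not merely its statement, and check that $h_1,h_2$ cut $S$ out with the correct (non-reduced) structure, so that the resulting sequence has $\sS_U$ itself, and not merely some sheaf with the same associated linear space, as its cokernel; the rest is routine bookkeeping.
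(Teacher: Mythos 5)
Your argument is correct and is essentially the paper's own (implicit) proof: the corollary is meant to follow exactly as you describe, by combining \autoref{thm:IrredCorank2} with the two-relation presentation extracted from its proof and the injectivity argument of \autoref{lem:corank-criterion-homological-dimension}\,/\,\autoref{lem:Monomorphism}, just as in \autoref{thm:IrredCorank1}. The one point you flag is harmless: any fiberwise linear function lying in the ideal $(h_1,h_2)$ is already an $\cO_U$-linear combination of $h_1$ and $h_2$ (compare fiberwise degree-one homogeneous parts, as in \autoref{lem:Homogeneous}), so the relation module of the chosen $\rank\sS+2$ generators is generated by the two relations and the cokernel of $\cO_U^2\abb\cO_U^{\rank\sS+2}$ is indeed $\sS_U$ itself.
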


Keeping the counter-example \autoref{rem:CEIrred} in mind,
one sees the assumption on the codimension is necessary.

\medskip
\begin{proofX}{of \autoref{thm:MainIrred}}
\autoref{lem:TorsionImpliesRed} gives the implication \spimply 21, and \autoref{thm:IrredCorank1} and \autoref{cor:IrredCorank2} yield the implication \spimply 1{3, 4}. \spimply 41 is obtained by \autoref{lem:homological-torsion}.
There is one implications left.

\spar
\spimply 32:\quad Assume that (3) is satisfied, \ie that $S:=L(\sS)$ is locally irreducible.
By definition, this implies that $S$ is reduced. But $S$ is connected. So, there can be just one irreducible component, \ie (2) holds also.
\end{proofX}

\mpar
If the codimension of the singular set of $\sS$ is big enough,
we can prove normality of the linear space associated to $\sS$.

\begin{thm}\label{thm:NormalLS}
Let $X$ be a factorial Cohen-Macaulay space and $\sS$ be a torsion-free coherent analytic sheaf on $X$ of corank at most 2 with $\codim\Sing \sS \gr 2\,{+}\corank \sS$. Then the linear space associated to $\sS$ is normal.\end{thm}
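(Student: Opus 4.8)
The plan is to exhibit the linear space $S:=L(\sS)$, under these hypotheses, as a local complete intersection inside a Cohen--Macaulay ambient space, and then to show that its singular locus has codimension at least $2$ in $S$. Passing to a connected component, I may assume $X$ is irreducible of pure dimension $n$; being factorial, $X$ is in particular normal. Write $r:=\rank\sS$. If $\corank\sS=0$, then $\sS$ is locally free and $S$ is a vector bundle over the normal space $X$, hence normal, and we are done; so assume $\corank\sS\in\{1,2\}$. Since $\codim\Sing\sS\gr 2+\corank\sS$, \autoref{thm:IrredCorank1} (in the case $\corank\sS=1$) respectively \autoref{cor:IrredCorank2} (in the case $\corank\sS=2$) apply: $S$ is locally irreducible, and every $p\in X$ has a neighborhood $U$ carrying an exact sequence
\[0\abb\cO_U^{m}\overset{\alpha}\abb\cO_U^{r+m}\abb\sS_U\abb 0,\qquad m:=\corank\sS\kl 2.\]
Thus $S_U\aus U\stimes\CC^{r+m}$ is cut out by the $m$ fiberwise linear holomorphic functions $h_k(x,z):=\sum_{j}\alpha_{jk}(x)\,z_j$, $k=1,\dots,m$.

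First I would verify that $S$ is Cohen--Macaulay. The ambient space $U\stimes\CC^{r+m}$ is Cohen--Macaulay and locally irreducible of pure dimension $n+r+m$ (a product of the Cohen--Macaulay, locally irreducible space $X$ with $\CC^{r+m}$), so its local rings are domains. On the other hand $S_U$ is locally irreducible and coincides with a vector bundle of rank $r$ over the dense open set $U\minus A$, where $A:=\Sing\sS$ has $\codim_X A\gr 2$; hence $S_U$ has pure dimension $n+r$. Therefore the $m$ functions $h_1,\dots,h_m$ drop the dimension by exactly $m$ in total, and since each cuts the dimension by at most one, each $h_i$ is a non-zero-divisor in the local rings of $V(h_1,\dots,h_{i-1})$ (which are Cohen--Macaulay, hence unmixed of the expected dimension, by induction). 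So $h_1,\dots,h_m$ is a regular sequence and $S_U$ is Cohen--Macaulay by \autoref{rem:CohenMacaulay}\,(i) applied $m$ times. As $p$ was arbitrary, $S$ is Cohen--Macaulay.

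Next I would bound $\Sing S$. Over $X_{\reg}\minus A$ the space $S$ is a vector bundle over a manifold, hence smooth, so $\Sing S\aus S_{X_{\sing}}\cup S_A$. As $X$ is normal, $\codim_X X_{\sing}\gr 2$, and over $X_{\sing}\minus A$ the fibers of $S$ have dimension $r$, whence $\dim S_{X_{\sing}\minus A}\kl(n-2)+r=\dim S-2$. Over $A$ the fiber of $S$ at $x$ has dimension $\rk_x\sS=r+\corank_x\sS\kl r+\corank\sS$, so
\[\dim S_A\kl\dim A+r+\corank\sS\kl(n-2-\corank\sS)+r+\corank\sS=\dim S-2,\]
and $S_{X_{\sing}\cap A}\aus S_A$. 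Hence $\Sing S$ has codimension at least $2$ in $S$; since $S$ is Cohen--Macaulay, \autoref{rem:CohenMacaulay}\,(iii) shows that $S$ is normal.

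The one genuinely delicate step is the Cohen--Macaulay assertion: one must know that the fiberwise linear equations defining $S$ form a regular sequence, and this is precisely where the homological-dimension bound — hence the local irreducibility and purity of dimension of $S$ furnished by \autoref{thm:IrredCorank1}/\autoref{cor:IrredCorank2} — is combined with the hypothesis that $X$ itself is Cohen--Macaulay. The codimension bookkeeping in the last paragraph is routine but uses the full strength of $\codim\Sing\sS\gr 2+\corank\sS$; as the counter-example \autoref{rem:CEIrred} already indicates, this bound cannot be weakened.
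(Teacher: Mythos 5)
Your proposal is correct and follows essentially the same route as the paper: it invokes \autoref{thm:IrredCorank1}/\autoref{cor:IrredCorank2} to get the length-one resolution and Cohen--Macaulayness of $L(\sS)$, bounds $\Sing L(\sS)$ by the parts over $X_\sing$ and over $\Sing\sS$ using $\codim\Sing\sS\gr 2+\corank\sS$, and concludes normality via \autoref{rem:CohenMacaulay}\,(iii). The only difference is that you spell out the regular-sequence argument for the Cohen--Macaulay step (which the paper leaves implicit), so no further comment is needed.
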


\begin{proof}
Let $S \aus  U\stimes\CC^N$ be the linear space associated to $\sS$ on an open subset $U$ of $X$ with $N=\rank\sS+\corank\sS$. \autoref{thm:IrredCorank1} or \autoref{cor:IrredCorank2} implies that $S$ is Cohen-Macaulay.
Let $A:=\Sing S=\Sing\sS_U\aus U$ denote the singular locus of $S$ as linear space. The singular set $S_\sing$ of $S$ as analytic subset of $U\stimes\CC^N$ is contained in $((A\stimes\CC^N)\cap S) \cup S_{U_{\sing}\minus A}$. We get
	\[\codim_S ((A\stimes\CC^N)\cap S) \gr \codim_U A - \corank \sS \gr 2\]
and $\codim_S S_{U_{\sing}\minus A} \gr \codim U_\sing \gr 2$ since $U\aus X$ is normal and $S_{U\minus A}$ is a vector bundle. Hence, $\codim_S\,S_\sing\gr 2$.
\autoref{rem:CohenMacaulay} (iii) (see \eg \cite[\Cor 5.2]{PeternellRemmert94}) implies that $S$ is actually normal.
\end{proof}

\mpar
\subsection{More preliminaries on torsion} $ $

\mpar
Throughout the paper, we will use the following observation without mentioning explicitly.
Let $\psi\colon \sF \rightarrow \sG$ be a morphism of analytic sheaves on a (locally irreducible) complex space $(X,\cO_X)$.
Then $\psi$ induces a canonical map 
\[\widehat\psi\colon \sF/\sT(\sF) \rightarrow \sG/\sT(\sG)\]
because the torsion sheaf $\sT(\sF)$ of $\sF$ is mapped by $\psi$ into the 
torsion sheaf $\sT(\sG)$ of $\sG$:
$r_x \psi(s_x) = \psi(r_x s_x)=0$ for germs $r_x\in\cO_{X,x}$, $s_x\in\sT(\sF)_x$ with $r_xs_x=0$.
Note that particularly $\sT(\sF) \subset \ker \psi$ if $\sG$ is torsion-free.

\medskip

\begin{lem}\label{lem:Monomorphism} 
Let $X$ be a locally irreducible complex space, $\sF$ and $\sG$ coherent analytic sheaves on $X$
such that there exists a morphism $\psi\colon \sF\abb\sG$ which is a monomorphism on an open dense subset of $X$. If $\sF$ is torsion-free, then $\psi$ is a monomorphism.
If not, $\psi$ induces a monomorphism $\hat\psi\colon\sF/\sT(\sF){\hookrightarrow} \sG/\sT(\sG)$.
\end{lem}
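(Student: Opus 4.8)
The plan is to reduce the statement to a statement about the kernel sheaf $\sKer\psi$, which is again coherent, and to show that its support is analytically thin. First I would observe that, by hypothesis, there is an open dense set $U\aus X$ on which $\psi|_U$ is a monomorphism; hence $\sKer\psi$ is a coherent analytic sheaf whose support is contained in $X\minus U$. Since $X$ is locally irreducible and $U$ is open and dense, $X\minus U$ is an analytically thin subset of $X$ (it contains no nonempty open subset of $X$), so $\sKer\psi$ is supported on a thin set and is therefore a torsion sheaf (working componentwise: over a small irreducible open set $V$, any local section of $\sKer\psi$ vanishes on the dense open set $V\cap U$, hence is annihilated by any $r\in\cO_X(V)$ that vanishes on the thin set $\supp(\sKer\psi)\cap V$ but is not identically zero on $V$; such an $r$ exists precisely because the support is thin).

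Now I would treat the two cases. If $\sF$ is torsion-free, then $\sKer\psi\aus\sF$ is a subsheaf of a torsion-free sheaf, hence itself torsion-free; being simultaneously a torsion sheaf it must be zero, so $\psi$ is a monomorphism. If $\sF$ is not assumed torsion-free, I would use the observation recorded just before the lemma: $\psi$ maps $\sT(\sF)$ into $\sT(\sG)$ and therefore induces a canonical map $\hat\psi\colon\sF/\sT(\sF)\abb\sG/\sT(\sG)$. It remains to check $\hat\psi$ is injective. Take a germ $[s_x]\in(\sF/\sT(\sF))_x$ with $\hat\psi([s_x])=0$, i.e.\ $\psi(s_x)\in\sT(\sG)_x$; then there is $r_x\in\cO_{X,x}$, $r_x\neq 0$ (a non-zero-divisor, as $X$ is locally irreducible), with $r_x\psi(s_x)=\psi(r_xs_x)=0$, so $r_xs_x\in(\sKer\psi)_x$. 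By the previous paragraph $(\sKer\psi)_x$ is torsion, so there is $t_x\neq 0$ with $t_x r_x s_x=0$; since $t_xr_x\neq 0$ this means $s_x\in\sT(\sF)_x$, i.e.\ $[s_x]=0$. Hence $\hat\psi$ is a monomorphism.

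The only genuinely non-routine point is the very first reduction: one must be sure that ``monomorphism on an open dense subset'' really forces $\supp(\sKer\psi)$ to be analytically thin, which is where local irreducibility of $X$ is used (it guarantees that a nowhere-dense closed set carries, on each irreducible local piece, a non-zero function vanishing on it, so that sections supported there are torsion). Everything after that is the standard dictionary between ``supported on a thin set'' and ``torsion'' for coherent sheaves over locally irreducible spaces, together with the elementary fact that a subsheaf of a torsion-free sheaf is torsion-free. I would also remark that the second assertion contains the first, since if $\sF$ is torsion-free then $\sF/\sT(\sF)=\sF$ and injectivity of $\hat\psi$ on $\sF/\sT(\sF)$ is literally injectivity of $\psi$ — but it is cleaner to state and prove the torsion-free case directly as above.
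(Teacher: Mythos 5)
Your argument is correct, but it is not the route the paper takes for this lemma. The paper proves the statement through its linear-space formalism: it reduces to the torsion-free case via the canonical map $\hat\psi$, passes to the associated linear spaces $F=L(\sF)$, $G=L(\sG)$, uses \autoref{thm:LocallyIrredOfPC} (local irreducibility of the primary component, proved via Rossi's monoidal transformation) and \autoref{rem:CharTFWithPC} to see that a section of $\sKer\psi$ vanishes on $\PC(F)$ and hence is zero. Your proof is instead the purely sheaf-theoretic alternative which the paper only alludes to (``Alternatively, one can prove this lemma by using only sheaf-theoretical terminology and arguments, \cf the proof of \autoref{lem:Injection2}''): $\sKer\psi$ is coherent and supported on a nowhere-dense analytic set, hence a torsion sheaf over the locally irreducible (so locally integral) space $X$, hence zero when $\sF$ is torsion-free; and a direct stalkwise computation gives injectivity of $\hat\psi$ in general. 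Your route is more elementary and self-contained, needing none of the machinery on primary components; the paper's route costs nothing extra in context, since that machinery is developed and reused throughout \autoref{sec:LinearSpaces}, and it illustrates the dictionary between torsion and reducibility of $L(\sF)$ that the paper is promoting.

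One imprecision you should repair: a section $s$ supported on a thin analytic set is \emph{not} in general annihilated by every $r$ vanishing on $\supp(s)$ (think of $\cO/\sJ^2$ for a hypersurface ideal $\sJ$: the class of $1$ is killed only by $\sJ^2$, not by $\sJ$). What you actually need, and what is true, is that the annihilator ideal of $s_x$ is nonzero at each point: its zero set is $\supp(\cO_{X,x}s_x)$, a proper analytic subgerm of the irreducible germ $X_x$, so by the Nullstellensatz the annihilator contains a nonzero element (equivalently, some power of your chosen $r$ kills $s$). With that correction the reduction ``thin support $\Rightarrow$ torsion'' holds, and the rest of your proof, including the stalkwise argument for $\hat\psi$ using that $\cO_{X,x}$ is an integral domain, goes through.
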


\begin{proof} The second statement follows from the considerations above and the torsion-free case. 
Hence, we can assume that $\sF$ is torsion-free.\\
Let $F$ and $G$ denote the linear spaces associated to $\sF$ and $\sG$, respectively.
\autoref{thm:LocallyIrredOfPC} implies that $\PC(F)$ and $\PC(G)$ are locally irreducible. Let $\psi$ be a monomorphism on the open dense subset $W$ of $X$ with $W\aus X\minus (\Sing \sF \cap\Sing\sG)$.
Thus, $\psi$ induces a holomorphic fiberwise linear map $\psi^\ast\colon G\abb F$ such that $\psi^\ast_W\colon G_W\abb F_W$ is a surjective map of vector bundles.
Let $s$ be a section in $\sKer\psi$, \ie $\psi^\ast \nach s$ vanishes on $G$. We get that $s$ vanishes on $F_W$ and, hence, on $\PC(F)$. Since $\sF$ is torsion free, we obtain $s=0$ (\cf \autoref{rem:CharTFWithPC}).
\end{proof}

Alternatively, one can prove this lemma by using only sheaf-theoretical terminology and arguments (\cf the proof of \autoref{lem:Injection2}).

\medskip
Note the following trivial consequence of the definition of the pullback:

\begin{lem}\label{lem:einbettung} 
Let $S\aus U\stimes \CC^N$ be a linear space over a complex space $U$, and let $\pi\colon V\abb U$ be a holomorphic map. 
Then the pullback $\pi^\ast(S)= V{\times_U} S$ can be embedded in $V\stimes\CC^N$.
\end{lem}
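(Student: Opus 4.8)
The plan is to exploit that closed embeddings of complex spaces are preserved under base change, so that the statement becomes a formal consequence of the universal property of the fiber product. By the definition of the linear space associated to a coherent analytic sheaf, the inclusion $S\aus U\stimes\CC^N$ is a closed embedding of complex spaces. Forming the fiber product over $U$ with $\pi\colon V\abb U$ then yields a closed embedding
\[\pi^\ast(S)=V\times_U S\ \hookrightarrow\ V\times_U(U\stimes\CC^N),\]
so it only remains to identify the right-hand side with $V\stimes\CC^N$.

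For that I would write the fiber product explicitly as
\[V\times_U S=\{(v,u,z)\in V\stimes U\stimes\CC^N:\pi(v)=u,\ (u,z)\in S\},\]
equipped with the complex-space structure it inherits as a closed analytic subspace of $V\stimes(U\stimes\CC^N)$. On this set the $U$-coordinate is determined by $v$, so the projection $(v,u,z)\mapsto(v,z)$ is a biholomorphism onto
\[(\pi\stimes\Id_{\CC^N})^{-1}(S)=\{(v,z)\in V\stimes\CC^N:(\pi(v),z)\in S\},\]
which is a closed analytic subspace of $V\stimes\CC^N$, being the preimage of the closed subspace $S$ under the holomorphic map $\pi\stimes\Id$. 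This projection is precisely the canonical isomorphism $V\times_U(U\stimes\CC^N)\iso V\stimes\CC^N$, and composing it with the embedding above exhibits $\pi^\ast(S)$ as the closed analytic subspace $(\pi\stimes\Id)^{-1}(S)$ of $V\stimes\CC^N$.

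No real obstacle is expected; the only point to keep an eye on is that this identification is compatible with the (possibly non-reduced) structure sheaf $\cO_{\pi^\ast(S)}=\cO_V\tensor_{\pi^{-1}\cO_U}\cO_S$, but this is immediate from the universal property, since $V\times_U(U\stimes\CC^N)$ and $V\stimes\CC^N$ represent the same functor, namely pairs consisting of a holomorphic map to $V$ and a holomorphic map to $\CC^N$.
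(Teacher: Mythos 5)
Your argument is correct and is exactly the reasoning the paper has in mind: it states this lemma without proof as a ``trivial consequence of the definition of the pullback,'' and your proof simply spells out that triviality (base change of the closed embedding $S\hookrightarrow U\stimes\CC^N$ plus the canonical identification $V\times_U(U\stimes\CC^N)\iso V\stimes\CC^N$). Nothing further is needed.
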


\begin{rem}\label{rem:Rabinowitz}
Let $\sS$ be a coherent analytic sheaf over a locally irreducible complex space $X$. 
Let $\ph=\ph_\sS\colon X_\sS\rightarrow X$ be the monoidal transformation of $X$ with respect to $\sS$, \ie $\sE:=\ph^T \sS$ is locally free.
Note that $X_\sS$ is again locally irreducible.
Then $\ph^\ast\sS$ has torsion in a point $q$ if and only if $\sS$ is not locally free in $\ph(q)$ (see \cite{Rabinowitz79}).
We will give a short, alternative proof with the statements from above.
Let $S$, $S^\ast$ and $E$ denote the linear complex spaces associated to $\sS$, $\ph^\ast\sS$ and $\sE$, respectively. 
If $\sS$ is not locally free in $\ph(q)$, then $\dim E_q<\dim S_{\ph(q)}$
(as $\dim E_q=\rk E=\dim S_{\tilde q}$ in all points $\tilde q$ where $\sS$ is locally free, see \S\,1.1 in \cite{GrauertRiemenschneider70}). 
\autoref{lem:einbettung} implies $\dim S^\ast_q=\dim S_{\ph(q)}>\dim E_q$. 
Since $\PC(S^\ast)=E$, we obtain that $S^\ast$ is reducible in $(q,0)$, \ie $\ph^\ast\sS$ has torsion in $q$ by \autoref{lem:IrredIfPCIsLS}.
The other implication of the claim is trivial.
\end{rem}

\section{Direct images of torsion-free preimage sheaves}
\label{sec:Injection1}

In this section, we will prove \autoref{thm:MainNormal} and more:

\begin{thm}\label{thm:Injection1+MainNormal}
Let $X$ be a locally irreducible complex space, $\sS$ a torsion-free coherent analytic sheaf on $X$
and $\pi\colon Y\abb X$ a proper modification of $X$. 
\begin{itemize} \item[(i)]
Then the canonical homomorphisms $\sS\rightarrow \pi_*(\pi^* \sS)$ and $\sS \rightarrow \pi_*(\pi^T \sS)$ both are injective, 
where $\pi^T \sS$ is the torsion-free preimage sheaf of $\sS$ under $\pi$.
\item[(ii)]
If the linear space $L(\sS)$ associated to $\sS$ is locally irreducible and $L(\pi^\ast\sS)$ is reduced, then $\pi_*(\pi^*\sS) \rightarrow \pi_*(\pi^T \sS)$ is injective.
\item[(iii)]
If the linear space $L(\sS)$ is normal, then
	\[\sS \iso \pi_\ast (\pi^T \sS).\]
\end{itemize}\end{thm}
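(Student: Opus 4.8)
The plan is to run all three parts through the equivalence between coherent analytic sheaves and their linear fibre spaces, as set up in \autoref{sec:LinearSpaces}. Throughout I may assume $X$ connected (hence irreducible), and I fix a nowhere dense analytic set $A\subset X$ with $\Sing\sS\subset A$ over which $\pi$ restricts to a biholomorphism. \emph{Part (i)} is quick: over $X\setminus A$ the sheaf $\pi^{*}\sS$ is the transport of the torsion-free $\sS$ under a biholomorphism, hence torsion-free there, so $\pi^{T}\sS=\pi^{*}\sS$ over $X\setminus A$; thus both canonical maps $\sS\to\pi_{*}\pi^{*}\sS$ and $\sS\to\pi_{*}\pi^{T}\sS$ are isomorphisms over $X\setminus A$, in particular monomorphisms on the dense open set $X\setminus A$, and since $\sS$ is torsion-free \autoref{lem:Monomorphism} promotes each to a monomorphism on all of $X$.

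For (ii) and (iii) I pass to linear spaces. Write $S=L(\sS)$, $S^{*}=L(\pi^{*}\sS)=Y\times_{X}S$ with its proper projection $\pr\colon S^{*}\to S$, and let $V=\PC(S^{*})$ be the primary component; by \autoref{thm:LocallyIrredOfPC} $V$ is locally irreducible, by \autoref{lem:Homogeneous} it is fibrewise homogeneous, it coincides with $S^{*}$ over $X\setminus A$, and it is the primary component $\PC(L(\pi^{T}\sS))$. The structural fact I will use is: \emph{if $S$ is irreducible, then $\pr|_{V}\colon V\to S$ is a proper modification, while every irreducible component of $S^{*}$ other than $V$ is carried by $\pr$ onto a proper, hence nowhere dense, analytic subset of $S$}. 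Indeed $\pr|_{V}$ is proper, surjective (since $\pr(V)\supseteq\overline{S_{X\setminus A}}=S$) and biholomorphic over the dense open $S_{X\setminus A}$; and over a point $s\in S_{X\setminus A}$ the fibre $\pr^{-1}(s)$ is a single point lying on $V$, so any component dominating $S$ must already equal $V$.

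\emph{Part (ii).} By left exactness of $\pi_{*}$, the kernel of $\pi_{*}\pi^{*}\sS\to\pi_{*}\pi^{T}\sS$ is $\pi_{*}\sT(\pi^{*}\sS)$, and a section of it over a small open $U$ is, in the dictionary of \autoref{rem:CharTFWithPC}, a fibrewise-linear function $\tau$ on $S^{*}|_{\pi^{-1}(U)}=\pr^{-1}(S_{U})$ vanishing on $V|_{\pi^{-1}(U)}$; I must show $\tau\equiv 0$. Since $L(\pi^{*}\sS)=S^{*}$ is reduced, it suffices to see that $\tau$ vanishes on every irreducible component of $S^{*}$: it does on $V$, and on any other component $C$ — which, by the structural fact, maps properly with compact fibres onto a nowhere dense analytic set $T=\pr(C)$ — the fibrewise-linear $\tau$ descends along $\pr|_{C}$ to a function on $T$ that vanishes wherever $C$ meets $V$, and one checks this forces $\tau|_{C}=0$. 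The forcing step — equivalently, that $\pi^{*}\sS$ has no nonzero torsion section over a neighbourhood of a full exceptional fibre once $L(\pi^{*}\sS)$ is reduced — is the main obstacle: it is where the reducedness hypothesis is genuinely consumed, and it requires tracking how the components of $L(\pi^{*}\sS)$ are attached to $V$ along the exceptional set.

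\emph{Part (iii).} If $S=L(\sS)$ is normal it is reduced and, being connected, irreducible, so the structural fact applies and $\pr|_{V}\colon V\to S$ is a proper modification of a \emph{normal} space; hence $(\pr|_{V})_{*}\cO_{V}=\cO_{S}$ by the second Riemann extension theorem. The scalar action on the $\CC$-factors of $S^{*}\subset\pi^{-1}(U)\times\CC^{N}$ is respected by $\pr$, so $\pr|_{V}$ is equivariant for it, and therefore the isomorphism $(\pr|_{V})_{*}\cO_{V}\iso\cO_{S}$ restricts to an identification of fibrewise-linear functions on $V|_{\pi^{-1}(U)}=(\pr|_{V})^{-1}(S_{U})$ with fibrewise-linear functions on $S_{U}$, i.e.\ with $\sS(U)$. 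Composing this identification with the restriction map $\pi^{T}\sS(\pi^{-1}(U))\hookrightarrow\{\,\text{fibrewise-linear functions on }V|_{\pi^{-1}(U)}\,\}$, which is injective because $\pi^{T}\sS$ is torsion-free (\autoref{rem:CharTFWithPC}), yields a left inverse of the canonical map $\sS(U)\to\pi_{*}\pi^{T}\sS(U)$; since the composite $\sS(U)\to\pi^{T}\sS(\pi^{-1}(U))\to\sS(U)$ is visibly the identity, a one-line diagram chase shows it is also a right inverse. Thus the canonical map is bijective for all small $U$, so $\sS\iso\pi_{*}\pi^{T}\sS$ — the only delicate point being the equivariance, which is exactly what keeps the pushforward of a fibrewise-linear function fibrewise-linear.
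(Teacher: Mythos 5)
Parts (i) and (iii) of your proposal are correct and essentially coincide with the paper's own proof: (i) is exactly \autoref{lem:Monomorphism} applied to maps that are isomorphisms over $X\setminus A$, and in (iii) your appeal to $(\pr|_{V})_*\cO_{V}=\cO_{S}$ for the proper modification $\pr|_{V}\colon V\to S$ of the normal space $S$ is the paper's argument in different packaging (a section of $\pi^T\sS$, restricted to the primary component, pushes down to a locally bounded, hence weakly holomorphic, hence --- by normality --- holomorphic, fibrewise linear function on $S$); the delicate point you flag, fibrewise linearity of the pushed-down function over $\Sing\sS$, is treated just as briefly in the paper.

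The genuine gap is in (ii), and you name it yourself: the ``forcing step'' $\tau|_{C}=0$ for a component $C\neq V$ of $S^{*}=L(\pi^{*}\sS)$ is the whole content of the statement, and your descent scheme does not produce it. Descending $\tau$ along $\pr|_{C}$ would already require the fibres of $\pr|_{C}$ to be connected, and even granting that, you only learn that the descended function on $T=\pr(C)$ vanishes on the locus where $C$ meets $V$, which is a priori a small subset of $T$; nothing in the proposal forces vanishing on all of $C$. The paper closes exactly this hole by a different propagation argument: since $S^{*}=Y\times_{X}S$, a point $(q,z)\in S^{*}$ lies in $\pi^{-1}(\pi(q))\stimes\{z\}$, which is a \emph{compact} analytic subset entirely contained in $S^{*}$ (and connected, $\pi$ being a modification of the locally irreducible $X$), so the holomorphic, fibrewise linear $\tau$ is constant on it; by \eqref{eq:Surj} and irreducibility of $S$, the image under $\pr$ of the primary component is all of $S$, so this compact set also contains a point $(q'',z)$ of the primary component, where $\tau$ vanishes; hence $\tau(q,z)=0$. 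Thus $\tau$ vanishes \emph{pointwise} on all of $S^{*}$, and only then is reducedness of $L(\pi^{*}\sS)$ invoked, to conclude $\tau=0$ as a section, \ie $\pi_*\sT(\pi^*\sS)=0$ over $U$. You have all the needed ingredients in place (reducedness of $S^{*}$, irreducibility of $S$, surjectivity of $\pr$ restricted to the primary component), but the step that actually consumes them --- constancy of $\tau$ on the full compact fibres $\pi^{-1}(\pi(q))\stimes\{z\}\aus S^{*}$ --- is missing, so part (ii) is not proved as written.
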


\begin{proof}
We can assume that $X$ is connected.
Let $S$ denote the linear space associated to $\sS$, $A\aus Y$ the set where $\pi$ is not biholomorphic and $A^c$ the complement. 
$S^\ast=Y {\times_X} S$ is the linear space associated to $\sS^\ast:=\pi^\ast\sS$. Let $\pr\colon S^\ast\abb S$ denote the projection,
let $E$ be the linear space associated to $\sE:=\pi^T \sS$, let $U$ be an open Stein set in $X$, and let $V:=\pi^{-1}(U)$. The construction of the linear spaces implies
	\[\begin{split}\Hom(S_U, U\stimes \CC)&\iso\sS(U),\\
	\Hom(S^\ast_V, V\stimes \CC)&\iso\pi^*\sS(V)=\big(\pi_\ast(\pi^* \sS)\big)(U) \FErgT{and}\\
	\Hom(E_V, V\stimes \CC)&\iso\sE(V)=(\pi_\ast\sE)(U).\end{split}\]
\mpar
Let $N$ be an integer big enough, so that $S_U$ can be realized as a subset of $U\stimes \CC^N$. 
We obtain closed embeddings $E_V\aus S_V^\ast\aus V\stimes\CC^N$, 
and $(q,z)$ is in $S^\ast_V$ if and only if $(\pi(q),z)\in S$. Since obviously $\pr(E_{A^c})=S_{\pi(A^c)}$ and proper holomorphic images of irreducible sets are irreducible, we obtain (\cf \eqref{eq:praeSurj})
\begin{equation}\label{eq:Surj}
\pr(\PC(E_V))=\PC(S_U).
\end{equation}

\medskip
\looseitem{(i)} $\sS\hookrightarrow \pi_*(\pi^* \sS)$ and $\sS \hookrightarrow \pi_*(\pi^T \sS)$: This follows from \autoref{lem:Monomorphism}.

\medskip
\looseitem{(ii)} Assume $S$ is locally irreducible and $S^\ast$ is reduced. To prove that the natural map given by the restriction $\Hom(S^\ast_V, V\stimes \CC)$ $\abb$ $\Hom(E_V,$ $V\stimes \CC)$ is injective, we apply $\pr(\PC(E_V))=S_U$ 
(use \eqref{eq:Surj} and that $S$ is irreducible):
\spar
Let $s\in \Hom(S^\ast_V, V\stimes \CC)$ with $s|_E=0$, \ie $s|_{\PC (E)}=0$. 
Let $s(q,z)=(q,f(q,z))\in\Hom(S^\ast_V, V\stimes \CC)$ be not the zero section,
\ie there is a point $(q',z_0)\in S^\ast$ with $f(q',z_0)\ungl 0$. There is a $q''\in\pi^{-1}(\pi(q'))$ such that $(q'',z_0)\in E$. Since $\pi^{-1}(\pi(q'))\stimes \{z_0\}$ is a compact analytic set in $S^\ast_V$, we get $f(q',z_0)=f(q'',z_0)$, \ie $f|_E \ungl 0$ and $s|_E \ungl 0$.

\medskip
\looseitem{(iii)} Assume that $S$ is normal. 
Fix a section $s(q,z){=}(q,f(q,z)) \in \Hom(E_V,$ $V\stimes \CC)$. 
Since $\pr\colon \PC(E_V)\abb \PC(S_U)=S_U$ is a proper modification (surjectivity is \eqref{eq:Surj}),
the map $\tilde f:=f \circ \pr^{-1}\colon S_U \abb \CC$ is a bounded meromorphic function, \ie it is weakly holomorphic. 
Since $S_U$ is normal, $\tilde f$ is holomorphic. 
Obviously, it is linear in the second argument. 
Hence, $\pr^{-1}$ gives a map $(\pr^{-1})^*\colon \Hom(E_V, V\stimes \CC) \abb \Hom(S_U, U\stimes \CC), s\auf \tilde s$ with $\tilde s(p,z)=(p,\tilde f(p,z))$. Since $f\circ \pr^{-1}=\tilde f=0$ implies $f=0$, this map is injective.
It is now easy to see that $(\pr^{-1})^*\colon \pi_*\pi^T \sS \hookrightarrow \sS$ is the inverse to the natural mapping $\sS \hookrightarrow \pi_*\pi^T \sS$.
\end{proof}

\begin{rem}\label{rem:CE1}
Without the additional assumption about normality, 
the natural map $\sS \hookrightarrow \pi_*\pi^T \sS$ is not necessarily bijective.
The following counter-example is derived from one due to Mircea Musta\c{t}\u{a}.
\spar
Let $\sS=(x^3,y^3)$ be the ideal sheaf on $\CC^2_{x,y}$ generated by the functions $x^3$ and $y^3$, 
and let $\pi\colon M\abb \CC^2$ be the blow up of the origin, \ie 
\[M=\{ (x,y; [t_1:t_2])\in \CC^2\stimes\CP^1\colon xt_2=yt_1\}.\]
Then
\[S=L(\sS) = \{(x,y;z_1,z_2) \in \CC^2\stimes\CC^2\colon z_2x^3=z_1y^3\},\]
and sections in $\sS$ correspond to sections in $\Hom(S,\CC^2\stimes\CC)$
via the assignment $x^3 \mapsto [(x,y;z_1,z_2) \mapsto z_1]$, $y^3\mapsto [(x,y;z_1,z_2) \mapsto z_2]$.
Now,
\begin{equation*}\begin{split}S^\ast&=\makebox[3.4em]{$L(\pi^* \sS)$} = \{(x,y;[t_1:t_2];z_1,z_2) \in \CC^2\stimes\CP^1\stimes\CC^2\colon xt_2=yt_1,z_2x^3=z_1y^3\},\\
E&=\makebox[3.4em]{$L(\pi^T \sS)$} = \{(x,y;[t_1:t_2];z_1,z_2) \in \CC^2\stimes\CP^1\stimes\CC^2\colon xt_2=yt_1,z_2t_1^3=z_1t_2^3\}.\end{split}\end{equation*}
Thus, $S^*=E\cup T$ with
\[T= \{(x,y;[t_2:t_2];z_1,z_2) \in \CC^2\stimes\CP^1\stimes\CC^2\colon x=y=0\}.\]
In $\Hom(E,M\stimes\CC)$, we have now also the section 
\[\big\{\!({\scriptscriptstyle\frac{t_2}{t_1}}z_1\colon t_1\neq0); ({\scriptstyle\frac{t_1^2}{t_2^2}} z_2\colon t_2\neq0)\big\},\]
corresponding to $x^2y$ in $\pi^T\sS$, and the section 
\[\big\{\!({\scriptscriptstyle \frac{t_2^2}{t_1^2} }z_1\colon t_1\neq0); ({\scriptscriptstyle\frac{t_1}{t_2}} z_2\colon t_2\neq0)\big\},\]
corresponding to $xy^2$, but these two do not extend to $S^*=E\cup T$ because there is no relation between $z_1$ and $z_2$ on $T$.
In $\Hom(S^*,M\stimes\CC)$, however, we have the section 
\[\big\{\!(y{\scriptscriptstyle\frac{t_2}{t_1}}z_1\colon t_1\neq0); (y{\scriptscriptstyle\frac{t_1^2}{t_2^2}} z_2\colon t_2\neq0)\big\},\]
corresponding to $x^2y^2$ in $\pi^*\sS$.
Moreover, it is easy to check that $x$, $x^2$, $y$, $y^2$, $xy$ are neither contained in $\pi_*\pi^*\sS$ nor in $\pi_*\pi^T\sS$.
Hence, we have:
\[\sS \subsetneq (x^3,x^2y^2, y^3)=\pi_\ast\pi^\ast\sS\subsetneq (x^3,x^2y,xy^2, y^3)=\pi_\ast\pi^T\sS.\]
\spar
Let us present a counter-example for (ii) in \autoref{thm:Injection1+MainNormal} if $S$ is not irreducible:\\
Let $X=\{x^3=y^2\}\aus\CC^2$ be the cusp, $\pi\colon\CC \abb X$, $\pi(t):=(t^2,t^3)$ the normalization
and let $\hat\cO_X$ denote the sheaf of weakly holomorphic functions on $X$.
Then one can compute that $\pi_\ast\pi^\ast\hat\cO_X$ has torsion elements with support in 0.
Yet, $\pi_\ast\pi^T\hat\cO_X = \pi_\ast\cO_\CC =\hat\cO_X$ is torsion-free.
Hence, there can not exist an injective morphism $\pi_\ast\pi^\ast\hat\cO_X\abb\pi_\ast\pi^T\hat\cO_X$.
\end{rem}

\begin{rem} 
In order to get the isomorphism $\sS = \pi_* \pi^T \sS$, normality is a natural assumption.
For example, if $\sE$ is locally free, we obtain $\pi_\ast\pi^\ast \sE \iso \sE$ if and only if $\pi_\ast \cO_Y\iso \cO_X$. 
\end{rem}

\section{Torsion-free preimages of direct image sheaves}
\label{sec:Injection2}

\begin{lem}\label{lem:Injection2}
Let $\pi\colon Y\rightarrow X$ be a proper modification between complex spaces $Y$, $X$,
and $\sE$ a torsion-free coherent analytic sheaf on $Y$. 
Then the canonical homomorphism $\pi^*\pi_* \sE \rightarrow \sE$ induces a canonical injection
\begin{equation}\label{eq:Injection2}
\pi^T\pi_* \sE \hookrightarrow \sE,
\end{equation}
where $\pi^T\pi_*\sE = \pi^*\pi_*\sE / \sT(\pi^*\pi_* \sE)$ is the torsion-free preimage of $\pi_* \sE$.
\end{lem}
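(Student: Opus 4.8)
The plan is to compute the kernel of the adjunction counit. Since $\pi$ is proper, $\pi_*\sE$ is coherent (Grauert's direct image theorem), hence so is $\pi^*\pi_*\sE$, and there is the canonical morphism $\epsilon\colon\pi^*\pi_*\sE\to\sE$. As $\sE$ is torsion-free, $\epsilon$ maps the torsion subsheaf $\sT(\pi^*\pi_*\sE)$ to zero, so $\epsilon$ factors through a morphism $\bar\epsilon\colon\pi^T\pi_*\sE\to\sE$; it remains to prove that $\bar\epsilon$ is injective, i.e.\ that $\ker\epsilon\subset\sT(\pi^*\pi_*\sE)$, which together with the reverse inclusion just noted gives $\ker\epsilon=\sT(\pi^*\pi_*\sE)$ and hence the injection \eqref{eq:Injection2} by the first isomorphism theorem.

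First I would record that $\epsilon$ is an isomorphism away from the exceptional set. Let $A\subset Y$ be the exceptional set of $\pi$ and $B=\pi(A)$ its center. Since $\pi|_{Y\setminus A}\colon Y\setminus A\to X\setminus B$ is biholomorphic and $\pi(A)=B$, one checks that $\pi^{-1}(X\setminus B)=Y\setminus A$. Base change along the open immersion $X\setminus B\hookrightarrow X$ then gives $(\pi_*\sE)|_{X\setminus B}\cong(\pi|_{Y\setminus A})_*(\sE|_{Y\setminus A})$, and applying $\pi^*$ and using that $\pi|_{Y\setminus A}$ is biholomorphic shows that $\epsilon|_{Y\setminus A}$ is an isomorphism onto $\sE|_{Y\setminus A}$. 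Consequently the coherent sheaf $\ker\epsilon$ is supported on $A$, which is analytically rare in $Y$. Since $Y$ is locally irreducible (as is implicit in the very definition of $\pi^T$), a coherent sheaf supported on such a set is a torsion sheaf: locally $A\subset\{f=0\}$ with $0\neq f\in\cO_{Y,y}$, the latter an integral domain, and then some power $f^n$ annihilates the stalk of $\ker\epsilon$. Hence $\ker\epsilon\subset\sT(\pi^*\pi_*\sE)$, which completes the argument.

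The main obstacle — really the only point beyond adjunction and base-change bookkeeping — is this last local statement, that a coherent sheaf supported on a nowhere dense analytic subset of a locally irreducible space is torsion; this is precisely where local irreducibility enters, through the fact that the defining function of the support becomes a non-zero-divisor in the stalk. Alternatively, once the generic isomorphism above is in hand, the conclusion follows immediately from \autoref{lem:Monomorphism} applied to $\epsilon\colon\pi^*\pi_*\sE\to\sE$ with target the torsion-free sheaf $\sE$: being an isomorphism on the dense open set $Y\setminus A$, it induces a monomorphism on the quotients by torsion, that is, $\pi^T\pi_*\sE=\pi^*\pi_*\sE/\sT(\pi^*\pi_*\sE)\hookrightarrow\sE/\sT(\sE)=\sE$.
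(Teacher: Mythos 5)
Your argument is correct and is essentially the paper's own proof (communicated by Matei Toma): factor the counit through the torsion quotient since $\sE$ is torsion-free, observe it is an isomorphism off the exceptional set, and conclude that the kernel is supported on a thin set and hence contained in the torsion subsheaf, with local irreducibility of $Y$ guaranteeing that sheaves supported on analytically rare sets are torsion. Your closing remark that the statement also follows from \autoref{lem:Monomorphism} matches the paper's alternative as well; you merely spell out the base-change and Nullstellensatz details that the paper leaves implicit.
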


The following proof was communicated to us by Matei Toma. Alternatively,
\autoref{lem:Injection2} follows also from \autoref{lem:Monomorphism}.

\smallskip
\begin{proof} 
Let $\sT$ denote the torsion sheaf of $\pi^\ast\pi_\ast\sE$, and let $\psi\colon\pi^\ast\pi_\ast\sE\abb \sE$  denote the natural map. 
Since $\sE$ is torsion-free, $\psi(\sT)=0$ and, hence, $\psi$ factors through $\hat\psi\colon\pi^T\pi_\ast \sE \abb \sE$. 
Since $\psi$ is an isomorphism outside of a thin analytic set $A\aus X$, an element in the kernel of $\psi$ has support in $A$. 
Therefore, the kernel is a subset of $\sT$, \ie $\hat \psi$ is injective.
\end{proof}

\begin{rem}\label{rem:CE2}
Let us give a counter-example showing that \eqref{eq:Injection2} is in general not an isomorphism.
Consider a modification $\pi\colon M \rightarrow \CC^n$ where $M$ is a complex manifold
with canonical sheaves $\Omega^n_M$ and $\Omega^n_{\CC^n}\iso\cO_{\CC^n}$. 
Then $\pi_* \Omega^n_M=\Omega^n_{\CC^n}\iso\cO_{\CC^n}$ so that $\pi^T \pi_* \Omega^n_M \iso \pi^T \cO_{\CC^n} \iso \cO_{M}$.
But $\cO_M {\neq} \Omega^n_M$ in general.
\end{rem}

However, we can be a bit more precise in \autoref{lem:Injection2} by use of the following observation 
if $\sE$ is locally free of rank 1:

\begin{lem}
Let $X$ be a complex space and $i\colon \sF \hookrightarrow \sG$ an injective morphism 
between two coherent locally free sheaves of rank 1 over $X$.
Then there exists an effective Cartier divisor, $D\geq 0$, such that
\[i(\sF) = \sG \otimes \cO_X(-D).\]
In particular, $i$ is an isomorphism precisely on $X-|D|$.
\end{lem}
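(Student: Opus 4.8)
The statement is a local-to-global fact about rank-one locally free sheaves, so the plan is to work locally, produce a single holomorphic function describing the inclusion, check that the local functions glue into a Cartier divisor, and then read off the two assertions. The key observation is that an injective morphism between line bundles is, locally, just multiplication by a holomorphic function, and that this function is a non-zero-divisor exactly because the morphism is injective.

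\textbf{Step 1: local description.} Cover $X$ by open sets $U_\alpha$ on which both $\sF$ and $\sG$ are free of rank one, say $\sF|_{U_\alpha}\iso\cO_{U_\alpha}\cdot e_\alpha$ and $\sG|_{U_\alpha}\iso\cO_{U_\alpha}\cdot f_\alpha$. The morphism $i$ then sends $e_\alpha$ to $g_\alpha f_\alpha$ for a unique $g_\alpha\in\cO_X(U_\alpha)$. Injectivity of $i$ forces $g_\alpha$ to be a non-zero-divisor in $\cO_X(U_\alpha)$: if $r\cdot g_\alpha=0$ with $r\ne 0$, then $i(r e_\alpha)=0$, contradicting injectivity. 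Hence $(g_\alpha)$ cuts out an effective divisor locally.

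\textbf{Step 2: gluing to a Cartier divisor.} On overlaps $U_\alpha\cap U_\beta$ the transition functions of $\sF$ and $\sG$ relate $g_\alpha$ and $g_\beta$: writing $e_\alpha=u_{\alpha\beta}e_\beta$ and $f_\alpha=v_{\alpha\beta}f_\beta$ with $u_{\alpha\beta},v_{\alpha\beta}\in\cO_X^\ast(U_\alpha\cap U_\beta)$, comparing the two expressions for $i(e_\alpha)$ gives $g_\alpha u_{\alpha\beta}^{-1}=g_\beta v_{\alpha\beta}^{-1}$, up to a unit. Thus the $g_\alpha$ (suitably normalized) define a global section of $\mathcal{M}_X^\ast/\cO_X^\ast$, i.e.\ an effective Cartier divisor $D\ge 0$ with local equations $g_\alpha$. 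By construction $\cO_X(-D)$ is the subsheaf of $\cO_X$ (twisted by the transitions of $\sG$) generated locally by $g_\alpha$, and $i(\sF)=\sG\otimes\cO_X(-D)$ follows by unwinding the identification: both sides are generated over $U_\alpha$ by $g_\alpha f_\alpha$.

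\textbf{Step 3: the locus where $i$ is an isomorphism.} The morphism $i$ is an isomorphism at a point $p$ precisely when $g_\alpha$ is a unit in $\cO_{X,p}$, which happens exactly when $p\notin|D|$. This gives the last sentence. The main obstacle I anticipate is purely bookkeeping: making sure the local functions $g_\alpha$ are normalized so that they genuinely define a Cartier divisor rather than just a line-bundle-twisted ideal, and tracking the transition functions of $\sG$ so that the identification $i(\sF)=\sG\otimes\cO_X(-D)$ is literally an equality of subsheaves of $\sG$ and not merely an abstract isomorphism. No deep input is needed — injectivity plus freeness does all the work — so once the gluing is set up carefully the proof is short.
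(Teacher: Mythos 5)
Your proposal is correct and follows essentially the same route as the paper's own proof: trivialize $\sF$ and $\sG$ on a common cover, represent $i$ by local holomorphic functions, use injectivity to see they are non-zero-divisors, and use the transition functions to check that the ratios are units, so the local functions define an effective Cartier divisor $D$ with $i(\sF)=\sG\otimes\cO_X(-D)$ and $i$ an isomorphism exactly off $|D|$. The only difference is cosmetic bookkeeping in the overlap relation (the paper records it as $G_{\beta\alpha}\,i_\alpha=i_\beta\,F_{\beta\alpha}$), which matches your computation.
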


\begin{proof}
Let $\{X_\alpha\}_\alpha$ be a locally finite open cover of $X$ such that both, $\sF$ and $\sG$, are free over each $X_\alpha$.
So, there are trivializations
\begin{eqnarray*}
\phi_\alpha\colon \sF|_{X_\alpha} &\overset{\sim}{\longrightarrow}& \cO_{X_\alpha},\\
\psi_\alpha\colon \sG|_{X_\alpha} &\overset{\sim}{\longrightarrow}& \cO_{X_\alpha},
\end{eqnarray*}
and for $X_{\alpha\beta}:=X_\alpha\cap X_\beta\neq \emptyset$, 
we have transition functions $F_{\beta\alpha}:= \phi_\beta\circ \phi_\alpha^{-1} \in \cO^*(X_{\alpha\beta})$
and $G_{\beta\alpha}:= \psi_\beta\circ \psi_\alpha^{-1} \in \cO^*(X_{\alpha\beta})$ satisfying the cocycle conditions.
In trivializations 
\begin{eqnarray*}
\psi_\alpha \circ i|_{X_\alpha} \circ \phi_\alpha^{-1}\colon \cO_{X_\alpha} \rightarrow \cO_{X_\alpha}
\end{eqnarray*}
is given by a holomorphic function $i_\alpha\in \cO(X_\alpha)$, vanishing nowhere identically,
with (unreduced) divisor $(i_\alpha)$. It is easy to see that
$G_{\beta\alpha} \cdot i_\alpha = i_\beta \cdot F_{\beta\alpha}$
on $X_{\alpha\beta}$, so that $i_\alpha / i_\beta = F_{\beta\alpha} / G_{\beta\alpha} \in \cO^*(X_{\alpha\beta})$.
Thus $D:=\{(X_\alpha,i_\alpha)\}_\alpha$ defines in fact an effective Cartier divisor with support $|D|$.

To see that $i(\sF)=\sG\otimes\cO_X(-D)$, note that
$\sG\otimes \cO_X(-D)$ is a coherent subsheaf of $\sG$ because $\cO_X(-D)$ is a sheaf of ideals in $\cO_X$,
and that
\thereqed{1.8\baselineskip}
\[\psi_\alpha\otimes 1\colon \left.\sG\otimes \cO_X(-D)\right|_{X_\alpha} \overset{\sim}{\longrightarrow} \cO_{X_\alpha} \otimes \cO_{X_\alpha}(-(i_\alpha)).\]
\end{proof}

So, we can deduce the following direct consequence of \autoref{lem:Injection2}:

\begin{thm}\label{thm:key3}
Let $\pi\colon Y\rightarrow X$ be a proper modification of $X$,
$\sE$ a locally free analytic sheaf of rank 1 on $Y$ and assume that $\pi^T\pi_* \sE$ is also locally free.
Then there exists an effective Cartier divisor $D$ on $Y$ such the following holds:
The canonical homomorphism $\pi^*\pi_* \sE \rightarrow \sE$ induces a canonical injection
\[i\colon \pi^T\pi_* \sE \hookrightarrow \sE\]
and
\[i(\pi^T\pi_* \sE) = \sE\otimes \cO_Y(-D).\]
In particular, $i$ is an isomorphism precisely on $Y-|D|$, and $|D|$ is contained in the exceptional set of $\pi$.
\end{thm}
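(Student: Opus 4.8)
The plan is to derive Theorem \ref{thm:key3} as a genuine corollary of \autoref{lem:Injection2} and the preceding lemma on injections between line bundles. First I would invoke \autoref{lem:Injection2}: since $\sE$ is locally free it is in particular torsion-free, so \eqref{eq:Injection2} furnishes the canonical injection $i\colon \pi^T\pi_\ast\sE \hookrightarrow \sE$ induced by the natural map $\pi^\ast\pi_\ast\sE \to \sE$. This already gives the first assertion at no cost.

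Next I would check that $\pi^T\pi_\ast\sE$ is locally free \emph{of rank $1$}. Local freeness is a hypothesis; for the rank, note that $\pi$ is biholomorphic over the complement of the center $B$ of the modification, so $\pi_\ast\sE$ restricts there to a locally free sheaf of rank $1$, whence $\pi_\ast\sE$ has generic rank $1$, and the same is then true of $\pi^\ast\pi_\ast\sE$ and of its torsion-free quotient $\pi^T\pi_\ast\sE$. Now $i$ is an injective morphism between two locally free sheaves of rank $1$ over $Y$, so I would apply the preceding lemma (on injective morphisms of rank-$1$ locally free sheaves) with $\sF=\pi^T\pi_\ast\sE$ and $\sG=\sE$, obtaining an effective Cartier divisor $D\geq 0$ on $Y$ with $i(\pi^T\pi_\ast\sE)=\sE\otimes\cO_Y(-D)$ and with $i$ an isomorphism precisely on $Y-|D|$. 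This covers the middle two assertions.

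It then remains to locate $|D|$ inside the exceptional set $A$ of $\pi$. For this I would verify that $i$ is an isomorphism over $Y\setminus A$: there $\pi$ restricts to a biholomorphism onto $X\setminus B$, so $\pi_\ast\sE$ is locally free over $X\setminus B$, the natural map $\pi^\ast\pi_\ast\sE\to\sE$ is an isomorphism over $Y\setminus A$, and in particular $\pi^\ast\pi_\ast\sE$ is already torsion-free there; hence $\pi^T\pi_\ast\sE|_{Y\setminus A}=\pi^\ast\pi_\ast\sE|_{Y\setminus A}$ and $i|_{Y\setminus A}$ is exactly this isomorphism. Since $i$ is an isomorphism precisely off $|D|$, this forces $|D|\subseteq A$, completing the argument.

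I do not expect a serious obstacle: the substance is already packaged in \autoref{lem:Injection2} and in the divisor lemma, and what is left is essentially bookkeeping. The two points that warrant a moment's care are the rank count for $\pi^T\pi_\ast\sE$ (needed before the divisor lemma applies) and keeping track that the injection $i$ supplied by \autoref{lem:Injection2} is the very morphism to which the divisor lemma is then applied; both are routine.
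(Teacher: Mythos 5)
Your proposal is correct and follows exactly the paper's route: the paper states this theorem as a direct consequence of \autoref{lem:Injection2} together with the preceding lemma on injective morphisms of rank-one locally free sheaves, which is precisely your argument. The additional verifications you supply (the generic rank count for $\pi^T\pi_*\sE$ and the fact that $i$ is an isomorphism off the exceptional set, forcing $|D|$ into it) are the routine details the paper leaves implicit, and they are carried out correctly.
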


\section{\texorpdfstring{$\pi^T$}{Pi\textasciicircum T} preserves injectivity and surjectivity}
\label{sec:mainA}

In this section, we consider properties of $\pi^T$ as functor and use them to prove \autoref{thm:mainA}.

\begin{lem}\label{lem:quasiexactness}
Let $\pi\colon Y \rightarrow X$ be a proper modification of a complex space $X$ such that $Y$ is locally irreducible.
Let
\[\psi\colon \sF \rightarrow \sG\]
be a morphism of coherent analytic sheaves. If $\psi$ is an epimorphism,
then the induced mapping 
\[\pi^T\psi\colon \pi^T \sF \rightarrow \pi^T \sG\]
is also an epimorphism. If $\psi$ is a monomorphism, then $\pi^T\psi$ is also a monomorphism.
\end{lem}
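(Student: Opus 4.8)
The plan is to handle the two assertions separately, relying on the right exactness of the analytic inverse image functor for the epimorphism statement and on \autoref{lem:Monomorphism} for the monomorphism statement; almost no new work should be required.

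Assume first that $\psi\colon\sF\abb\sG$ is an epimorphism. Since $\pi^*$ is right exact, $\pi^*\psi\colon\pi^*\sF\abb\pi^*\sG$ is again an epimorphism, and composing it with the canonical surjection $\pi^*\sG\abb\pi^T\sG$ yields an epimorphism $\pi^*\sF\abb\pi^T\sG$. As $\pi^*\psi$ carries $\sT(\pi^*\sF)$ into $\sT(\pi^*\sG)$ (which is annihilated in $\pi^T\sG$), this epimorphism factors through the canonical surjection $\pi^*\sF\abb\pi^T\sF$, and the factoring morphism is by construction $\pi^T\psi$. Since a morphism through which an epimorphism factors is itself an epimorphism, $\pi^T\psi$ is an epimorphism. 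I expect this half to be entirely routine.

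Now assume $\psi$ is a monomorphism. Here one cannot argue by pulling back directly, because $\pi^*$ is not left exact — typically $\pi^*\psi$ has torsion in its kernel — so passing to torsion-free quotients is essential. I would apply \autoref{lem:Monomorphism} to the morphism $\pi^*\psi\colon\pi^*\sF\abb\pi^*\sG$ of coherent analytic sheaves on the locally irreducible space $Y$. To check its hypothesis, let $A\aus Y$ be the exceptional set of $\pi$ and $B=\pi(A)$ its center: then $\pi$ restricts to a biholomorphism $Y\minus A\abb X\minus B$, and since $\psi$ is a monomorphism on $X$, in particular over $X\minus B$, its pullback $\pi^*\psi$ is a monomorphism over $Y\minus A$, an open dense subset of $Y$ (as $A$ is analytically rare). \autoref{lem:Monomorphism} then produces a monomorphism $\pi^*\sF/\sT(\pi^*\sF)\hookrightarrow\pi^*\sG/\sT(\pi^*\sG)$ induced by $\pi^*\psi$, and this induced morphism is exactly $\pi^T\psi$. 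The only substantive point is that a subsheaf of $\pi^*\sF$ supported on the thin set $A$ must consist of torsion elements over a locally irreducible base, which is precisely what \autoref{lem:Monomorphism} already encapsulates (via the primary component of the associated linear space); so I foresee no real obstacle — the lemma reduces to bookkeeping of left/right exactness together with that citation.
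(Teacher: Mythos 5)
Your proof is correct and takes essentially the same route as the paper: the epimorphism half is the identical right-exactness/factoring argument, and for the monomorphism half the paper simply inlines the content of \autoref{lem:Monomorphism} (injectivity of $\pi^T\psi$ on a dense open subset plus torsion-freeness of $\pi^T\sF$ kills any kernel supported on a thin set), which you instead invoke directly on $\pi^*\psi$ over the locally irreducible space $Y$ --- a substitution the paper itself indicates is legitimate. No gaps.
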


\begin{proof}
Let $\psi$ be an epimorphism, \ie surjective.
Recall that $\pi^*$ is right-exact. So, $\pi^*\psi\colon \pi^* \sF \rightarrow \pi^*\sG$ is still surjective. 
But then it is easy to see that the induced mapping $\pi^T\psi\colon \pi^T\sF \rightarrow \pi^T\sG$ is also surjective.

\medskip
For the second statement, let $\psi$ be injective.
Let $f_x\in (\pi^T \sF)_x$ such that $\pi^T \psi(f_x)=0$. This means that there is an open set $U\subset Y$ and a representative
$f\in \pi^T\sF(U)$ such that $\pi^T\psi(f)=0$. But $\pi^T \psi$ is injective on a dense open subset $W\subset X$.
Thus, $f=0$ on $U\cap W$, \ie $f$ has support on a thin set. But $\pi^T \sF$ is torsion-free. So, $f_x=0$ and $f=0$. 
\end{proof}

Note that $\pi^T$ is not exact. A simple counter-example is as follows.
Let $\mathfrak{m}$ be the maximal ideal sheaf of the origin in $\CC^2$.
Then $0\rightarrow \mathfrak{m} \hookrightarrow \cO_{\CC^2} \rightarrow \cO_{\CC^2}/\mathfrak{m} \rightarrow 0$ is exact.
Let $\pi$ be just the identity on $\CC^2$. So,
we have $\pi^T \mathfrak{m}=\mathfrak{m}$, $\pi^T \cO_{\CC^2} = \cO_{\CC^2}$ and
$\pi^T \big( \cO_{\CC^2} / \mathfrak{m}\big) =0$.
The resulting sequence $0 \rightarrow \mathfrak{m} \hookrightarrow \cO_{\CC^2} \rightarrow 0$ is clearly not exact.

\bigskip
\begin{proofX}{of \autoref{thm:mainA}}
Let $\pi\colon Y\abb X$ be a proper modification between locally irreducible complex spaces. 
Let $\sF$ and $\sG$ be coherent analytic sheaves on $X$ and $Y$, respectively.

\medskip
\looseitem{(i)} The case $\sF=\pi_\ast \sG$: By \autoref{thm:Injection1+MainNormal} (i), the natural map $\sF\rightarrow \pi_\ast\pi^T \sF$ is injective. 
Moreover, \autoref{lem:Injection2} yields injectivity of the natural map
	\[\pi^T\sF=\pi^T\pi_\ast \sG\rightarrow \sG.\]
Since $\pi_\ast$ is left-exact, we obtain the second natural injection
	\[\pi_\ast\pi^T\sF\hookrightarrow \pi_\ast\sG=\sF.\]

\medskip
\looseitem{(ii)} The case $\sG=\pi^T \sF$: As above, \autoref{lem:Injection2} gives $\pi^T\pi_*\sG\hookrightarrow \sG$. 
By \autoref{thm:Injection1+MainNormal} (i) we have also the natural injection
	\[\sF\hookrightarrow \pi_\ast\pi^T\sF=\pi_\ast\sG.\]
But $\pi^T$ preserves injectivity (\autoref{lem:quasiexactness}) so that we obtain the injection
\thereqed{1.8\baselineskip}
	\[\sG=\pi^T\sF\hookrightarrow \pi^T \pi_\ast\sG.\]
\end{proofX}

\section{Application to ideal sheaves}
\label{sec:application1}

In this section, we discuss the application of \autoref{thm:mainA} to reduced ideal sheaves.
As a preparation, note the following:

\begin{lem}\label{lem:prepX}
Let $\pi\colon Y \rightarrow X$ be a holomorphic mapping between complex spaces $Y$, $X$.
Then $\pi^* \cO_X = \cO_Y$.
\end{lem}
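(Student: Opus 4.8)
The plan is to unwind the definition of the analytic inverse image sheaf and reduce the claim to a triviality. Recall that for a sheaf $\sS$ of $\cO_X$-modules one sets
\[\pi^*\sS \;:=\; \pi^{-1}\sS \tensor_{\pi^{-1}\cO_X} \cO_Y,\]
where $\cO_Y$ is viewed as a $\pi^{-1}\cO_X$-algebra via the comorphism (morphism of sheaves of rings) $\pi^{\#}\colon \pi^{-1}\cO_X \abb \cO_Y$. Applying this with $\sS=\cO_X$, the first tensor factor is just the sheaf of rings $\pi^{-1}\cO_X$ regarded as a module over itself, so the statement becomes an instance of the elementary identity $R\tensor_R M \iso M$ (for a sheaf of rings $R$ and an $R$-module $M$).

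The key step is to make this isomorphism explicit at the level of presheaves, so that one sees no sheafification is involved. For open $U\aus Y$ the canonical map
\[\pi^{-1}\cO_X(U)\tensor_{\pi^{-1}\cO_X(U)}\cO_Y(U)\;\abb\;\cO_Y(U),\qquad a\tensor f\;\auf\;\pi^{\#}(a)\cdot f,\]
is bijective, with inverse $f\auf 1\tensor f$. Hence the presheaf $U\mapsto \pi^{-1}\cO_X(U)\tensor_{\pi^{-1}\cO_X(U)}\cO_Y(U)$ is already isomorphic to the sheaf $\cO_Y$, and therefore $\pi^*\cO_X = \pi^{-1}\cO_X\tensor_{\pi^{-1}\cO_X}\cO_Y \iso \cO_Y$. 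Alternatively, one may check this stalkwise: for $y\in Y$ one has $(\pi^*\cO_X)_y = \cO_{X,\pi(y)}\tensor_{\cO_{X,\pi(y)}}\cO_{Y,y}\iso\cO_{Y,y}$, and the resulting isomorphisms are visibly compatible with restriction, hence glue to an isomorphism of sheaves.

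There is essentially no obstacle here: the assertion is immediate once the definition of $\pi^{*}$ is spelled out, the only minor point being to note that the presheaf tensor product is already a sheaf in this case so that sheafification is vacuous. In particular, surjectivity of $\pi$ plays no role in the argument; it is included only because it is the situation in which the lemma will subsequently be used.
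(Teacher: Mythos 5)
Your proof is correct and follows essentially the same route as the paper: unwind the definition $\pi^*\cO_X=\pi^{-1}\cO_X\tensor_{\pi^{-1}\cO_X}\cO_Y$ and apply the identity $R\tensor_R M\iso M$ (checked at presheaf or stalk level). Your closing remark is also accurate: surjectivity is not actually needed for the isomorphism, whereas the paper's one-line proof phrases the collapse via the inclusion $\pi^{-1}\cO_X\subset\cO_Y$ (containing the germ of $1$ at every point), which is where the hypothesis on $\pi$ implicitly enters its wording.
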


\begin{proof}
As $\pi^{-1} \cO_X \subset \cO_Y$, we have that $\pi^* \cO_X = \pi^{-1}\cO_X \otimes_{\pi^{-1} \cO_X} \cO_Y = \cO_Y$,
because $\pi^{-1} \cO_X$ contains (the germ of) the function 1 at any point of $Y$.
\end{proof}

Coming to ideal sheaves,
let us start with the following observation:

\begin{lem}\label{lem:idealsheaves1}
Let $X$ be a locally irreducible complex space and $A\subset X$ an (unreduced) analytic subspace with ideal sheaf $\sJ_A$.
Let $\pi\colon Y \rightarrow X$ be a proper modification and $B:=\pi^{-1}(A)$ the unreduced analytic preimage 
with ideal sheaf $\sJ_B$. Then:
\begin{eqnarray*}
\pi^T \sJ_A = \sJ_B.
\end{eqnarray*}
\end{lem}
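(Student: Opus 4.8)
The statement $\pi^T\sJ_A = \sJ_B$ says that the torsion-free preimage of the ideal sheaf of $A$ is exactly the ideal sheaf of the preimage $B = \pi^{-1}(A)$. My plan is to compare the two sheaves via the natural surjection $\pi^*\sJ_A \to \pi^T\sJ_A = \pi^*\sJ_A/\sT(\pi^*\sJ_A)$ and the natural map $\pi^*\sJ_A \to \cO_Y$ coming from the inclusion $\sJ_A \hookrightarrow \cO_X$ (and \autoref{lem:prepX}, which gives $\pi^*\cO_X = \cO_Y$). First I would note that the composite $\pi^*\sJ_A \to \cO_Y$ has image equal to $\sJ_B$ by the very definition of the unreduced analytic preimage: $B$ is cut out by the ideal generated by the pullbacks of local generators of $\sJ_A$, which is precisely the image of $\pi^*\sJ_A$ in $\cO_Y$. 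So we get a canonical surjection $\pi^*\sJ_A \twoheadrightarrow \sJ_B$.

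The key point is then that this surjection $\pi^*\sJ_A \to \sJ_B$ kills exactly the torsion subsheaf $\sT(\pi^*\sJ_A)$, hence factors through an \emph{isomorphism} $\pi^T\sJ_A \iso \sJ_B$. For the inclusion $\sT(\pi^*\sJ_A) \subseteq \ker$: since $\sJ_B \subset \cO_Y$ is a subsheaf of a torsion-free sheaf ($Y$ is locally irreducible, so $\cO_Y$ is torsion-free), $\sJ_B$ is torsion-free, and therefore any morphism $\pi^*\sJ_A \to \sJ_B$ must annihilate torsion — this is exactly the observation recorded in the paragraph preceding \autoref{lem:Monomorphism}. For the reverse inclusion $\ker \subseteq \sT(\pi^*\sJ_A)$: away from the exceptional set $A$ (more precisely, on $Y \setminus \pi^{-1}(A)$ and in fact wherever $\pi$ is biholomorphic and $\sJ_A$ restricts to a locally principal — nonzero-divisor-generated — ideal), the map $\pi^*\sJ_A \to \sJ_B$ is an isomorphism, so its kernel is supported on a thin analytic set and is therefore torsion. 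Combining, $\ker = \sT(\pi^*\sJ_A)$ and the factored map $\pi^T\sJ_A \to \sJ_B$ is both injective and surjective.

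The step I expect to be the main obstacle — or at least the one requiring the most care — is verifying that the kernel of $\pi^*\sJ_A \to \cO_Y$ really is \emph{entirely} torsion, i.e.\ that $\pi^*\sJ_A \to \sJ_B$ is an isomorphism on a dense open set rather than merely generically surjective. One has to use that $X$ is locally irreducible, so that locally a generator of $\sJ_A$ (say in a point where $A$ is a hypersurface, or on the dense open locus where this holds) is a nonzero divisor in $\cO_X$; its pullback is then a nonzero divisor in $\cO_Y$ on a dense open set (again using local irreducibility of the relevant fibers / that $\pi$ is a modification), which forces the relevant rank-one computation to match. Once this generic isomorphism is in hand, the torsion argument via \autoref{lem:Monomorphism} (or directly via the torsion-freeness of $\cO_Y$) closes the proof cleanly. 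A slicker route, if available, is to invoke \autoref{lem:Monomorphism} directly: the natural map $\pi^T\sJ_A \to \cO_Y$ is a monomorphism since $\sJ_A \hookrightarrow \cO_X$ is a monomorphism which is an isomorphism on a dense open set, so by \autoref{lem:quasiexactness} (or rather the same generic-isomorphism bookkeeping) its image is the torsion-free sheaf $\sJ_B$, giving $\pi^T\sJ_A \iso \sJ_B$.
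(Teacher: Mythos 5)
Your proposal is correct and is essentially the paper's own argument: the paper also identifies the image of $\pi^*\sJ_A\to\cO_Y$ with $\sJ_B$ (via right-exactness of $\pi^*$, \autoref{lem:prepX} and $\pi^*(\cO_X/\sJ_A)=\cO_Y/\sJ_B$), notes that torsion dies because $\cO_Y$ is torsion-free, and gets injectivity of the induced map $\pi^T\sJ_A\to\cO_Y$ from \autoref{lem:quasiexactness}, whose proof is exactly your inline ``kernel is supported on a thin set, hence torsion'' argument. Your worry about nonzero-divisor generators is unnecessary: it suffices that $\pi$ is biholomorphic off the analytically rare exceptional set, where $\pi^*$ is an exact equivalence, so the map is an isomorphism on a dense open set.
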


\begin{proof}
Consider the short exact sequence of sheaves over $X$:
\begin{eqnarray*}
0 \rightarrow \sJ_A \overset{\alpha}{\longrightarrow} \cO_X \longrightarrow \cO_X/\sJ_A\rightarrow 0.
\end{eqnarray*}
By right-exactness of $\pi^*$, we deduce the exact sequence
\begin{eqnarray*}
 \pi^* \sJ_A \overset{\pi^*\alpha}{\longrightarrow} \pi^* \cO_X \longrightarrow \pi^* \big(\cO_X/\sJ_A\big) \rightarrow 0.
\end{eqnarray*}
Now, we use \autoref{lem:prepX} twice: $\pi^* \cO_X=\cO_Y$ and $\pi|_B^* \cO_A = \cO_B $ (which implies that $\pi^* (\cO_X/\sJ_A) = \cO_Y/\sJ_B$
using the definition of the analytic preimage, see \eg \Prop 0.27 in \cite{Fischer76}).
As $\cO_Y$ is torsion-free, it is clear that 
\begin{eqnarray}\label{eq:xxx}
\sT(\pi^* \sJ_A) \subset \sKer\pi^*\alpha.
\end{eqnarray}
Consider $\pi^T \alpha\colon \pi^T \sJ_A \rightarrow \pi^T\cO_X = \pi^* \cO_X=\cO_Y$.
By \eqref{eq:xxx}, it follows that $\sIm(\pi^*\alpha)=\sIm(\pi^T\alpha)$,
and \autoref{lem:quasiexactness} tells us that $\pi^T\alpha$ is injective. So, we obtain the short exact sequence
\begin{eqnarray*}
0\rightarrow \pi^T \sJ_A \overset{\pi^T \alpha}{\longrightarrow} \cO_Y \longrightarrow \cO_Y/\sJ_B \rightarrow 0,
\end{eqnarray*}
telling us that in fact $\pi^T \sJ_A = \sJ_B$.
\end{proof}

It is clear that $\sJ_A$ (and $\pi^T\sJ_A=\sJ_B$) are torsion-free,
and so we obtain from \autoref{thm:mainA} (ii) that:
\begin{eqnarray}\label{eq:appl100}
\sJ_B \cong \pi^T \pi_* \sJ_B.
\end{eqnarray}

\bigskip
Under some additional assumptions, we have also:

\begin{lem}\label{lem:idealsheaves}
Let $X$ be a normal complex space,
and let $A$ be a locally complete intersection or a normal analytic set in $X$ with (reduced) ideal sheaf $\sJ_A$. 
Let $\sigma\colon\tilde X \abb X$ denote the blow up of $X$ with center $A$, i.e. the monoidal transformation with respect to $\sJ_A$,
and let $\sJ_B$ be the (reduced) ideal sheaf associated to the reduced exceptional set $B:=\sigma^{-1}(A)$. Then:
\begin{equation}\label{eq:DI}
\sJ_A \iso \sigma_\ast\sJ_B.
\end{equation}
\end{lem}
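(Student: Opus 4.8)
The plan is to prove directly that the canonical monomorphism $\sJ_A\hookrightarrow\sigma_\ast\sJ_B$ is surjective, by identifying both sheaves with explicit ideals inside $\sO_X$. The key input is that $X$ is normal, so that for the proper modification $\sigma$ one has $\sigma_\ast\sO_{\tilde X}=\sO_X$ (bounded holomorphic functions extend across the analytically rare exceptional set). Pushing the inclusion $\sJ_B\hookrightarrow\sO_{\tilde X}$ forward by the left-exact functor $\sigma_\ast$ then realises $\sigma_\ast\sJ_B$ as a sheaf of ideals of $\sO_X$: a local section over an open $U\aus X$ is $\sigma^\ast g$ for a unique $g\in\sO_X(U)$, and it belongs to $\sJ_B(\sigma^{-1}U)$ exactly when $g\circ\sigma$ vanishes on the set $B\cap\sigma^{-1}(U)$.

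Now I would compare this with $\sJ_A$. Since $B=\sigma^{-1}(A)$ and $\sigma$ is surjective, $\sigma\big(B\cap\sigma^{-1}(U)\big)=\sigma\big(\sigma^{-1}(A\cap U)\big)=A\cap U$; hence $g\circ\sigma$ vanishes on $B\cap\sigma^{-1}(U)$ if and only if $g$ vanishes on $A\cap U$, and --- as $\sJ_A$ is the ideal sheaf of the reduced analytic set $A$ --- this happens if and only if $g\in\sJ_A(U)$. Doing this for all $U$, compatibly with restrictions, gives $\sigma_\ast\sJ_B=\sJ_A$ as subsheaves of $\sO_X$; checking that the natural composition $\sJ_A\to\sigma_\ast\sigma^\ast\sJ_A\to\sigma_\ast\sJ_B$ realises this equality yields the asserted canonical isomorphism \eqref{eq:DI}.

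The hypothesis that $A$ is a locally complete intersection or a normal analytic set is then used only to pin down the form $\sJ_A\iso\sigma_\ast\sigma^T\sJ_A$ of the statement: combined with \autoref{lem:idealsheaves1}, which gives $\sigma^T\sJ_A=\sJ_{B'}$ for the unreduced analytic preimage $B'$, it guarantees that the exceptional divisor $B'$ of the monoidal transformation is already reduced --- in the l.c.i. case because the relevant graded algebra of $\sJ_A$ is a polynomial algebra over $\sO_A$, so that $B'$ is a projective bundle over the reduced base $A$, and the normal case is handled analogously --- whence $\sJ_{B'}=\sJ_B$ and $\sigma^T\sJ_A=\sJ_B$ is locally free. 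I do not expect a serious obstacle for the isomorphism $\sJ_A\iso\sigma_\ast\sJ_B$ itself: its whole content is the normality fact $\sigma_\ast\sO_{\tilde X}=\sO_X$ together with the reducedness of $A$ and $B$. The subtler point, needed only for the displayed reformulation, is the reducedness of the unreduced exceptional divisor (equivalently $\sigma^T\sJ_A=\sJ_B$); alternatively one could verify that the linear space $L(\sJ_A)$ is normal in these two cases and apply \autoref{thm:Injection1+MainNormal}(iii), but the elementary argument above is preferable.
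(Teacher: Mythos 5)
Your argument is correct, and it takes a genuinely more elementary route than the paper. The paper's proof establishes the stronger statement $\cO_A\cong\sigma_\ast\cO_B$, and that is exactly where the hypothesis on $A$ enters: in the complete intersection case one has $B=A\times\CP^m$, and in the case of normal $A$ a section of $\sigma_\ast\cO_B$ is shown to be weakly holomorphic on $A$; the lemma is then deduced from the exact commutative diagram comparing $0\to\sJ_A\to\cO_X\to\cO_X/\sJ_A\to 0$ with the push-forward of the corresponding sequence on $\tilde X$, using $\sigma_\ast\cO_{\tilde X}\cong\cO_X$. You observe instead that only the kernel of $\cO_X\cong\sigma_\ast\cO_{\tilde X}\to\sigma_\ast\cO_B$ is needed, and that this kernel can be computed directly: since $\sJ_B$ is the full ideal sheaf of the reduced set $B=\sigma^{-1}(A)$ and $\sigma$ is surjective, a function $g\in\cO_X(U)$ satisfies $(g\circ\sigma)|_{B\cap\sigma^{-1}(U)}=0$ if and only if $g|_{A\cap U}=0$, i.e. $g\in\sJ_A(U)$; together with $\sigma_\ast\cO_{\tilde X}=\cO_X$ (normality of $X$ plus properness) this identifies $\sigma_\ast\sJ_B$ with $\sJ_A$ compatibly with restrictions. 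This proves \eqref{eq:DI} without using that $A$ is a locally complete intersection or normal, and indeed without using that $\sigma$ is the monoidal transformation rather than an arbitrary proper modification; what the paper's longer argument buys is the intermediate isomorphism $\cO_A\cong\sigma_\ast\cO_B$ (equivalently $\cO_X/\sJ_A\cong\sigma_\ast(\cO_{\tilde X}/\sJ_B)$), a strictly stronger fact which does need the hypotheses on $A$. Two small remarks on your closing paragraph: the consequence \eqref{eq:appl101}, $\sJ_A\cong\sigma_\ast\sigma^T\sJ_A$, already follows from \eqref{eq:DI} and \autoref{thm:mainA}(i) without knowing $\sigma^T\sJ_A=\sJ_B$, so the hypotheses on $A$ are really only relevant for the introduction's claim that the unreduced exceptional space is reduced (where your ``the normal case is handled analogously'' is a hand-wave); but none of this affects the validity of your proof of the lemma as stated.
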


\begin{proof}
The statement is local with respect to $X$,
so we can assume that $A$ is the zero-set of reduced holomorphic functions $f_0,...,f_m$. 
$(\sJ_A)_p$ is generated by the germs $f_{0,p},..,f_{m,p}$, and $\tilde X \aus X\stimes \CP^m$
(see \eg \S\,2.5 in \cite{Riemenschneider71} for the monoidal transformation of ideal sheaves).
We show that
	\[\cO_A\iso\sigma_\ast\cO_B.\]

\looseitem{(I)} $A$ is a complete intersection, \ie $m+1=\codim A$: This implies $B = A\stimes \CP^m$. 
For all open subsets $U\aus A$, we obtain
	\[\cO_A(U)\iso \cO_{A\stimes \CP^m}(U\stimes \CP^m)=\cO_B(\sigma^{-1}(U)).\]

\looseitem{(II)} $A$ is normal: In this case, $B$ is an analytic subset of $A\stimes \CP^m$,
and by the surjectivity $\sigma(B)=A$, we get the injection $\cO_A \iso \sigma_\ast \cO_{A\stimes\CP^m} \hookrightarrow \sigma_\ast \cO_B$. 
On the other hand, a section in $\cO_B(\sigma^{-1}(U))$ gives a weakly holomorphic function on $A$: 
With part (I) applied on the regular part $A_\reg$ of $A$, 
we get a holomorphic function on $A_\reg$ which is bounded in points of $A_\sing$. 
Since we assumed $A$ to be normal, we get $\sigma_\ast\cO_B\iso \widehat\cO_A\iso \cO_A$.

\medskip
Thus, $\cO_A\iso \sigma_\ast \cO_B$ as desired. In other words: $\cO_X/\sJ_A\iso \sigma_\ast (\cO_{\tilde X}/\sJ_B)$. 
We obtain the exact commutative diagram:
	\[\begin{xy} \xymatrix@-0.25pc{ 
0\ar[r]& \sigma_\ast \sJ_B\ar[r]& \sigma_\ast \cO_{\tilde X}\ar[r]\ar@{=}^{\wr}[d]& \sigma_\ast(\cO_{\tilde X}/\sJ_B)\ar@{=}^{\wr}[d]&\\
	0\ar[r]& \sJ_A\ar[r]& \cO_{X}\ar[r]& \cO_X/\sJ_A\ar[r]&0.}\end{xy}\]
It follows that in fact $\sigma_\ast \sJ_B\iso \sJ_A$.
\end{proof}
\spar
In the situation of \autoref{lem:idealsheaves}, we can now apply \autoref{thm:mainA} (i) to $\sJ_A\cong\sigma_* \sJ_B$ and obtain:
\begin{eqnarray}\label{eq:appl101}
\sJ_A \iso \sigma_\ast \sigma^T\sJ_A.
\end{eqnarray}

\section{Holomorphic \texorpdfstring{$n$}{n}-forms on singular spaces}
\label{sec:hnforms}

As a consequence of \autoref{thm:mainA} and \autoref{thm:key3}, we get also the following application to holomorphic $n$-forms:

\begin{thm}\label{thm:nforms}
Let $X$ be a complex space of pure dimension $n$ and $\sK_X$ the Grauert-Riemenschneider canonical sheaf on $X$.
Then there exist a resolution of singularities $\pi\colon M\rightarrow X$ 
and an effective divisor, $D\geq 0$, with support on the exceptional set of the resolution
such that
\begin{eqnarray}\label{eq:pKX}
\pi^T \sK_X \cong \Omega_M^n(-D)=\Omega_M^n\otimes \cO_M(-D),
\end{eqnarray}
where $\Omega^n_M$ is the canonical sheaf of holomorphic $n$-forms on $M$, 
and \eqref{eq:pKX} is induced by the natural mapping $\pi^* \sK_X = \pi^*\pi_* \Omega^n_M \rightarrow \Omega^n_M$. 
Moreover, we get
	\[\pi_\ast \Omega_M^n =\sK_X\iso\pi_\ast \Omega_M^n(-D).\]
\end{thm}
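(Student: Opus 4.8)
The plan is to build a resolution on which $\pi^T\sK_X$ is \emph{already} locally free, to extract the divisor from \autoref{thm:key3}, and then to push everything back down via \autoref{thm:mainA}.

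First I would construct $\pi$. Apply \autoref{thm:rossi} to $\sK_X$ to obtain the monoidal transformation $\ph_{\sK_X}\colon X_{\sK_X}\abb X$, for which $\ph_{\sK_X}^T\sK_X$ is locally free, and then compose with a resolution of singularities $\sigma\colon M\abb X_{\sK_X}$ (which exists by Hironaka) to get a resolution of singularities $\pi=\ph_{\sK_X}\nach\sigma\colon M\abb X$. Since $\ph_{\sK_X}^T\sK_X$ is locally free, its further pullback under $\sigma$ carries no torsion, and one checks (as sketched in the introduction) that $\pi^T\sK_X=\sigma^\ast(\ph_{\sK_X}^T\sK_X)$, so that $\pi^T\sK_X$ is locally free. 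By the very definition of the Grauert-Riemenschneider canonical sheaf we have $\pi_\ast\Omega^n_M=\sK_X$; in particular $\sK_X$ is torsion-free, being the direct image of the torsion-free sheaf $\Omega^n_M$.

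Next I would apply \autoref{thm:key3} with $Y=M$ and $\sE=\Omega^n_M$, which is locally free of rank $1$ since $M$ is an $n$-dimensional manifold. Then $\pi_\ast\sE=\pi_\ast\Omega^n_M=\sK_X$, and the hypothesis that $\pi^T\pi_\ast\sE=\pi^T\sK_X$ be locally free is exactly what was arranged. \autoref{thm:key3} therefore produces an effective Cartier divisor $D\gr 0$ on $M$, with $|D|$ contained in the exceptional set of $\pi$, together with the canonical injection $\pi^T\sK_X=\pi^T\pi_\ast\Omega^n_M\hookrightarrow\Omega^n_M$ induced by the natural map $\pi^\ast\pi_\ast\Omega^n_M\abb\Omega^n_M$, whose image equals $\Omega^n_M\otimes\cO_M(-D)$. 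This is precisely \eqref{eq:pKX}, namely $\pi^T\sK_X\iso\Omega^n_M(-D)$.

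It remains to push down. Since $\sK_X=\pi_\ast\Omega^n_M$ and $\Omega^n_M$ are both torsion-free, \autoref{thm:mainA}\,(i), applied with $\sF=\sK_X=\pi_\ast\Omega^n_M$ and $\sG=\Omega^n_M$, gives $\sK_X\iso\pi_\ast\pi^T\sK_X$. Combining this with \eqref{eq:pKX} yields
\[\sK_X\iso\pi_\ast\pi^T\sK_X\iso\pi_\ast\big(\Omega^n_M(-D)\big),\]
and $\pi_\ast\Omega^n_M=\sK_X$ by definition, which is the asserted chain $\pi_\ast\Omega^n_M=\sK_X\iso\pi_\ast\Omega^n_M(-D)$. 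I do not expect a serious obstacle: \autoref{thm:key3} and \autoref{thm:mainA}\,(i) carry the weight. The one step that needs genuine care is the assertion that $\pi^T\sK_X$ is locally free, i.e.\ that applying $\pi^T$ factors through the intermediate modification $\ph_{\sK_X}$ and that pulling the locally free sheaf $\ph_{\sK_X}^T\sK_X$ back under $\sigma$ introduces no new torsion; this is the observation already recorded in the introduction. The local irreducibility needed merely to define $\pi^T$ is automatic, $M$ being a manifold.
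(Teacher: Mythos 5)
Your proposal is correct and follows essentially the same route as the paper: construct $\pi$ by composing Rossi's monoidal transformation with a Hironaka resolution so that $\pi^T\sK_X$ is locally free, then apply \autoref{thm:key3} to $\sE=\Omega^n_M$ with $\pi_\ast\Omega^n_M=\sK_X$ to produce $D$ and the isomorphism \eqref{eq:pKX}, and finally \autoref{thm:mainA}\,(i) to push down and obtain $\sK_X\iso\pi_\ast\Omega^n_M(-D)$. The extra detail you supply on why $\pi^T\sK_X=\sigma^\ast(\ph_{\sK_X}^T\sK_X)$ is locally free is exactly the observation the paper records in its introduction.
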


\begin{proof}
Let $\pi\colon M \rightarrow X$ be a resolution of singularities such that $\pi^T \sK_X$ is locally free.
Such a resolution exists due to Rossi and Hironaka (apply first Rossi's \autoref{thm:rossi} and then Hironaka's resolution of singularities).
Recall that $\sK_X =\pi_* \Omega^n_M$
by definition of the Grauert-Riemenschneider canonical sheaf.
So, the assertion follows directly from \autoref{thm:key3} and \autoref{thm:mainA}.
\end{proof}

The following observation is also useful:

\begin{lem}\label{lem:tensor}
Let $\sF$, $\sG$ be torsion-free coherent analytic sheaves on a locally irreducible complex space $X$ and let $\pi\colon Y\rightarrow X$ be a proper modification of $X$ such that $\pi^T \sG$ is locally free. Then
\[\pi^T \big( \sF \otimes \sG\big) = \pi^T \sF \otimes \pi^T \sG\]
and there is a natural injection
\[\sF \otimes \sG \hookrightarrow \pi_* \big( \pi^T \sF \otimes \pi^T \sG\big).\]
\end{lem}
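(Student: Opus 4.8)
The plan is to prove the identity $\pi^T(\sF\otimes\sG)=\pi^T\sF\otimes\pi^T\sG$ first, and then read off the injection from \autoref{thm:Injection1+MainNormal} (i). For the identity I would start from the fact that the analytic inverse image commutes with tensor products, $\pi^*(\sF\otimes_{\cO_X}\sG)=\pi^*\sF\otimes_{\cO_Y}\pi^*\sG$, and consider the canonical epimorphism
\[\pi^*\sF\otimes\pi^*\sG\longrightarrow\pi^T\sF\otimes\pi^T\sG\]
obtained as the tensor product of the two canonical epimorphisms $\pi^*\sF\to\pi^T\sF$ and $\pi^*\sG\to\pi^T\sG$. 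Since $\pi^T\sG$ is locally free of some rank $r$, the target is locally isomorphic to a direct sum of $r$ copies of the torsion-free sheaf $\pi^T\sF$, hence is itself torsion-free; therefore its kernel contains $\sT(\pi^*\sF\otimes\pi^*\sG)=\sT\big(\pi^*(\sF\otimes\sG)\big)$, and the epimorphism descends to a surjection $\pi^T(\sF\otimes\sG)\twoheadrightarrow\pi^T\sF\otimes\pi^T\sG$.

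The main point is the reverse inclusion. Here I would work locally and use that local freeness of $\pi^T\sG$ makes the exact sequence $0\to\sT(\pi^*\sG)\to\pi^*\sG\to\pi^T\sG\to 0$ split, so that locally $\pi^*\sF\otimes\pi^*\sG\cong(\pi^*\sF)^{\oplus r}\oplus\big(\pi^*\sF\otimes\sT(\pi^*\sG)\big)$; under this splitting the kernel of the map to $\pi^T\sF\otimes\pi^T\sG\cong(\pi^T\sF)^{\oplus r}$ is exactly $\sT(\pi^*\sF)^{\oplus r}\oplus\big(\pi^*\sF\otimes\sT(\pi^*\sG)\big)$. Both summands are torsion sheaves — the first obviously, the second because $\sT(\pi^*\sG)$ is supported on a nowhere dense analytic subset of the locally irreducible $Y$ — so this kernel equals $\sT(\pi^*\sF\otimes\pi^*\sG)$. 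Consequently the canonical global comparison homomorphism $\pi^T(\sF\otimes\sG)\to\pi^T\sF\otimes\pi^T\sG$ is locally an isomorphism, hence an isomorphism.

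For the injection I would note that $\pi^*\sT(\sF\otimes\sG)$ maps into a torsion subsheaf of $\pi^*(\sF\otimes\sG)$ (its image is supported on a thin set), so $\pi^T(\sF\otimes\sG)=\pi^T\big((\sF\otimes\sG)/\sT(\sF\otimes\sG)\big)$; applying \autoref{thm:Injection1+MainNormal} (i) to the torsion-free sheaf $(\sF\otimes\sG)/\sT(\sF\otimes\sG)$ and combining with the identity already proved yields a natural injection
\[(\sF\otimes\sG)/\sT(\sF\otimes\sG)\hookrightarrow\pi_*\pi^T(\sF\otimes\sG)=\pi_*\big(\pi^T\sF\otimes\pi^T\sG\big).\]
When $\sF\otimes\sG$ is torsion-free — in particular whenever $\sG$ is locally free, which is the case relevant to holomorphic $n$-forms with values in a locally free sheaf — this is exactly the natural injection $\sF\otimes\sG\hookrightarrow\pi_*(\pi^T\sF\otimes\pi^T\sG)$.

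I expect the only genuine obstacle to be the reverse kernel inclusion in the first part, namely verifying that killing the torsion of $\pi^*(\sF\otimes\sG)$ agrees with tensoring $\pi^T\sF$ by the locally free sheaf $\pi^T\sG$; this is precisely the step where local freeness of $\pi^T\sG$ is essential (via the local splitting), and everything else is formal (right-exactness of $\pi^*$, left-exactness of $\pi_*$, and the fact that coherent sheaves supported on nowhere dense analytic sets over a locally irreducible space are torsion).
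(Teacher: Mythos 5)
Your argument is correct, but it takes a genuinely different route in both halves, so let me compare. For the isomorphism, the paper does not compute the kernel at all: it notes that $\pi^T\sF\otimes\pi^T\sG$ is torsion-free (torsion-free tensor locally free), so the surjection $\pi^*(\sF\otimes\sG)\to\pi^T\sF\otimes\pi^T\sG$ descends to a surjection from $\pi^T(\sF\otimes\sG)$, and then applies \autoref{lem:Monomorphism}: the descended map is an isomorphism on a dense open subset of $Y$ and its source is torsion-free, hence it is injective. Your local-splitting computation (splitting $0\to\sT(\pi^*\sG)\to\pi^*\sG\to\pi^T\sG\to0$ locally and identifying the kernel with $\sT(\pi^*\sF)^{\oplus r}\oplus(\pi^*\sF\otimes\sT(\pi^*\sG))$, both torsion) is a valid substitute; it is more explicit about where local freeness of $\pi^T\sG$ enters, while the paper's route is shorter and reuses its standing machinery. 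For the injection, the paper simply takes $\pi_*$ of the isomorphism and invokes \autoref{thm:Injection1+MainNormal}~(i) for $\sF\otimes\sG$; your detour through $(\sF\otimes\sG)/\sT(\sF\otimes\sG)$ is not idle caution, because \autoref{thm:Injection1+MainNormal}~(i) requires a torsion-free sheaf and the hypotheses of the lemma do not force $\sF\otimes\sG$ to be torsion-free: in the paper's own footnote example $\sF=(z^2,zw)$, $\sG=(w^2,zw)$ on $\CC^2$ (each a multiple of the maximal ideal sheaf, so $\pi^T\sG$ is locally free when $\pi$ is the blow-up of the origin) the hypotheses hold while $\sF\otimes\sG$ has torsion, and then no injection into the torsion-free sheaf $\pi_*(\pi^T\sF\otimes\pi^T\sG)$ can exist. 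So your version --- the injection $(\sF\otimes\sG)/\sT(\sF\otimes\sG)\hookrightarrow\pi_*(\pi^T\sF\otimes\pi^T\sG)$, using $\pi^T(\sF\otimes\sG)=\pi^T\big((\sF\otimes\sG)/\sT(\sF\otimes\sG)\big)$ (justified, since the image of $\pi^*\sT(\sF\otimes\sG)$ is supported on a thin analytic set), which yields the stated injection exactly when $\sF\otimes\sG$ is torsion-free, e.g.\ when one factor is locally free as in the application to $n$-forms --- is the precise form of the second claim, and in that respect your proof is more careful than the paper's one-line deduction.
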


\begin{proof}
Note that $Y$ is also locally irreducible.
Consider the two natural surjections $\pi^* \sF \rightarrow \pi^T \sF$ and $\pi^* \sG \rightarrow \pi^T \sG$.
These yield a natural surjection 
\[\pi^* (\sF\otimes \sG) = \pi^*\sF \otimes \pi^* \sG \longrightarrow \pi^T \sF \otimes \pi^T\sG\] 
which is an isomorphism on an open dense subset of $Y$.
Since the tensor product of a torsion-free and a locally free sheaf is torsion-free,\!%
\footnote{The tensor product of two torsion-free sheaves need not be torsion-free. E.\,g., let $\sI$ be the ideal sheaf generated of $(z^2, zw)$ on $\CC^2_{z,w}$ and $\sJ$ be the ideal sheaf generated by $(w^2, zw)$. Then $z^2\stensor w^2- zw\stensor zw \in\sI\stensor\sJ$ is not zero; yet, $z \cdot (z^2\stensor w^2- zw\stensor zw)=z^3\stensor w^2- z^2\stensor zw^2= 0$.}
we obtain by use of \autoref{lem:Monomorphism} a natural isomorphism
\begin{eqnarray}\label{eq:id100}
\pi^T \big( \sF\otimes \sG \big) = \frac{ \pi^* (\sF \otimes \sG)}{\sT\big( \pi^* (\sF \otimes \sG )\big)} 
\overset{\sim}{\longrightarrow} \pi^T \sF \otimes \pi^T\sG.
\end{eqnarray}
The second statement follows by taking the direct image of \eqref{eq:id100} under $\pi$ 
and \autoref{thm:Injection1+MainNormal} (i).
\end{proof}

This lemma and the projection formula gives directly the following corollary of \autoref{thm:nforms}.

\begin{thm}\label{thm:app1}
Let $X$ be a complex space of pure dimension $n$, $\sK_X$ the Grauert-Riemenschneider canonical sheaf on $X$
and $\sF$ a torsion-free coherent analytic sheaf on $X$.
Then there exists a resolution of singularities $\pi\colon M\rightarrow X$ 
and an effective divisor, $D\geq 0$, with support on the exceptional set of the resolution
such that
	\[\pi^T(\sF\tensor\sK_X )\iso \pi^T\sF \otimes \Omega^n_M(-D).\]
If $\sF$ is locally free, then
	\[\pi_\ast(\pi^\ast\sF\tensor\Omega_M^n)\iso\sF\otimes \sK_X \iso \pi_* \left( \pi^\ast \sF \otimes \Omega^n_M(-D)\right).\]
\end{thm}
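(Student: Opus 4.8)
The plan is to deduce the statement by combining \autoref{thm:nforms}, \autoref{lem:tensor} and the projection formula; no new geometric construction is needed. First I would apply \autoref{thm:nforms} to fix, once and for all, a resolution of singularities $\pi\colon M\to X$ together with an effective divisor $D\geq 0$ supported on the exceptional set such that $\pi^T\sK_X\iso\Omega^n_M(-D)$ and $\pi_\ast\Omega^n_M=\sK_X\iso\pi_\ast\Omega^n_M(-D)$. The point to retain is that $\pi^T\sK_X$ is locally free, which is exactly the hypothesis under which $\sK_X$ may be fed into \autoref{lem:tensor}.

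For the first isomorphism I would apply \autoref{lem:tensor} with $\sG:=\sK_X$: since $\sF$ and $\sK_X$ are torsion-free on the (locally irreducible) space $X$ and $\pi^T\sK_X$ is locally free, the lemma gives $\pi^T(\sF\otimes\sK_X)=\pi^T\sF\otimes\pi^T\sK_X$, and substituting $\pi^T\sK_X\iso\Omega^n_M(-D)$ yields $\pi^T(\sF\otimes\sK_X)\iso\pi^T\sF\otimes\Omega^n_M(-D)$.

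For the second statement, assume now that $\sF$ is locally free. Then $\pi^\ast\sF$ is locally free on $M$, hence so are $\pi^\ast\sF\otimes\Omega^n_M$ and $\pi^\ast\sF\otimes\Omega^n_M(-D)$, and the projection formula applies to give $\pi_\ast(\pi^\ast\sF\otimes\Omega^n_M)\iso\sF\otimes\pi_\ast\Omega^n_M=\sF\otimes\sK_X$ and $\pi_\ast(\pi^\ast\sF\otimes\Omega^n_M(-D))\iso\sF\otimes\pi_\ast\Omega^n_M(-D)\iso\sF\otimes\sK_X$, where the last step uses \autoref{thm:nforms}. Chaining these identifications gives the asserted isomorphisms. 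Alternatively, since $\sF\otimes\sK_X=\pi_\ast(\pi^\ast\sF\otimes\Omega^n_M)$ is the direct image of the locally free --- hence torsion-free --- sheaf $\pi^\ast\sF\otimes\Omega^n_M$, one may invoke \autoref{thm:mainA}(i) to get $\sF\otimes\sK_X\iso\pi_\ast\pi^T(\sF\otimes\sK_X)$, and then combine this with the first part together with $\pi^T\sF=\pi^\ast\sF$ (valid because $\sF$ is locally free).

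I do not expect a genuine obstacle here: the argument is entirely bookkeeping once \autoref{thm:nforms} and \autoref{lem:tensor} are in hand. The two points that require a little care are that a single resolution $\pi$ must serve all the assertions simultaneously --- which it does, being the one furnished by \autoref{thm:nforms} --- and that the tensor product of the torsion-free $\sF$ with the locally free $\pi^T\sK_X$ is again torsion-free, so that \autoref{lem:tensor} genuinely applies; but this last fact is precisely the one already recorded in the proof of \autoref{lem:tensor}. If one is worried that $X$ need not be locally irreducible, one reduces to irreducible components, the whole question being local over $X$.
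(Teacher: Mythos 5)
Your argument is correct and is essentially the paper's own: the result is stated there as a direct corollary of \autoref{thm:nforms} combined with \autoref{lem:tensor} and the projection formula, exactly as you do (including the observation that $\pi^T\sF=\pi^\ast\sF$ when $\sF$ is locally free and that $\sK_X=\pi_\ast\Omega^n_M$ is torsion-free so \autoref{lem:tensor} applies).
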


\section{Non-analytic preimages and direct images}
\label{sec:NAInverseImage}

In this section, we will finally study non-analytic preimages of direct image sheaves and vice versa. 
For our purpose, the following definition is useful:

\begin{defn}\label{defn:id}
Let $\sF$ be a sheaf on a complex space $X$.
We say that $\sF$ satisfies the property (id) if the following holds:
For any irreducible open set $W\subset X$ and sections $s,t\in \sF(W)$,
the equality $s=t$ on a non-empty open subset of $W$ implies that $s=t$ on $W$.
\end{defn}

Property (id) means that the identity theorem generalizes to sections of $\sF$.
Actually, the identity theorem for irreducible complex spaces (\cf \eg \S\,1.3 in \cite[\Chap 9]{GrauertRemmertCAS}) 
implies that the structure sheaf $\cO_X$ of a complex space satisfies (id).
Moreover, we have:

\begin{lem}\label{lem:id}
Let $X$ be a locally irreducible complex space.
Then a coherent analytic sheaf $\sF$ on $X$ satisfies the property (id) if and only if it is torsion-free.
\end{lem}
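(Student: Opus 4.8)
The statement is an "if and only if" for a coherent analytic sheaf $\sF$ on a locally irreducible space $X$: property (id) $\Leftrightarrow$ torsion-freeness. The plan is to prove each direction using the linear-space machinery already developed, in particular the characterization of torsion elements via the primary component (\autoref{rem:CharTFWithPC}) and the local irreducibility of the primary component (\autoref{thm:LocallyIrredOfPC}), together with the identity theorem for irreducible complex spaces.

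\emph{Torsion-free $\Rightarrow$ (id).} First I would assume $\sF$ is torsion-free and let $S=L(\sF)$ be the associated linear space with primary component $E$. Fix an irreducible open $W\subset X$ and sections $s,t\in\sF(W)$ that agree on a nonempty open $W'\subset W$; set $u:=s-t\in\sF(W)$, so $u|_{W'}=0$. Identifying sections of $\sF$ with fiberwise-linear morphisms $S_W\to W\times\CC$ (Fischer), $u$ corresponds to such a morphism vanishing over $W'$. By \autoref{thm:LocallyIrredOfPC} the primary component $E$ is locally irreducible, and over $W$ the set $\PC(E_W)$ is irreducible (since $W$ is irreducible, $E_W$ is connected and locally irreducible, hence irreducible — the same argument as in the proofs above). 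The function representing $u$ on $E_W$ vanishes on the open subset lying over $W'\cap A^c$, which is nonempty and open in the irreducible $\PC(E_W)$; by the identity theorem it vanishes on all of $\PC(E_W)$, i.e. $u|_E=0$. Now \autoref{rem:CharTFWithPC} gives $u\in\sT(\sF)=0$, so $s=t$ on $W$. This establishes (id).

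\emph{(id) $\Rightarrow$ torsion-free.} For the converse I would argue contrapositively: suppose $\sF$ has a nonzero torsion element. Then there is an irreducible open $U\subset X$ (shrinking, using local irreducibility), a section $s\in\sF(U)$ with $s\neq 0$, and a non-zero-divisor $r\in\cO_X(U)$ with $r\cdot s=0$ — this is exactly the setup in the proof of \autoref{lem:TorsionImpliesRed}. As there, there is a dense open $V\subset U$ on which $r$ is a unit, so $s|_V=0$. Thus $s$ and the zero section agree on the nonempty open set $V$ but not on all of $U$ (since $s\neq0$), contradicting (id). Hence (id) forces $\sF$ to be torsion-free.

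\emph{Main obstacle.} The only genuinely delicate point is the bookkeeping in the first direction: passing from "$u$ vanishes on an open subset of $W$" to "$u$ vanishes on the primary component $E_W$" requires knowing that the relevant piece of $E$ over $W$ is an \emph{irreducible} analytic set so that the classical identity theorem applies — this is where \autoref{thm:LocallyIrredOfPC} (local irreducibility of $\PC$) together with connectedness over the irreducible base $W$ does the work, and one must be a little careful that the open set where $u$ vanishes meets the part $A^c$ of $W$ where $\sF$ is locally free (so that it is genuinely open and nonempty in $\PC(E_W)$, which has pure dimension $\rk\sF+\dim W$ over that locus). Everything else is routine; the converse is essentially a restatement of \autoref{lem:TorsionImpliesRed}.
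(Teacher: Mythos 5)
Your proof is correct and follows essentially the same route as the paper: the forward direction via \autoref{rem:CharTFWithPC}, the local irreducibility of the primary component (\autoref{thm:LocallyIrredOfPC}) and the identity theorem, and the converse via the torsion-element argument of \autoref{lem:TorsionImpliesRed} (which the paper merely declares ``clear''). Your extra care about the vanishing locus meeting $A^c$ and about irreducibility of the relevant piece of $E$ over $W$ is sound bookkeeping, not a deviation.
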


\begin{proof}
Let $\sF$ be a torsion-free coherent sheaf on $X$ and $F:=L(\sF)$ the associated linear space.
Then, by \autoref{rem:CharTFWithPC},
a section of $\sF$ is uniquely defined by it values on the locally irreducible primary component $E:=\PC(F)$ of $F$.
\Ie for $W\subset X$ and $s,t \in\sF(W)=\Hom(F_W,W\stimes\CC)$, $s|_E=t|_E$ is equivalent to $s=t$.
So, the desired property follows by the identity theorem applied to $E$.

Conversely, it is clear that sheaves with torsion on a locally irreducible space do not satisfy (id).
\end{proof}

\smallskip
For non-coherent sheaves, the equivalence of \autoref{lem:id} does not hold in general: 
The sheaf $\sC$ of continuous functions on an irreducible complex space $X$ is torsion-free as $\cO_X$-module sheaf,
but it does not satisfy (id).

\medskip
The property (id) is useful in the context of non-analytic preimages:

\begin{lem}\label{lem:preimage}
Let $\pi\colon Y\rightarrow X$ be a proper modification of a locally irreducible complex space $X$,
and $\sF$ a sheaf on $X$ satisfying (id). Then for $U\subset Y$ open:
\begin{eqnarray}\label{eq:presheaf1}
\pi^{-1} \sF (U) = \lim_{\substack{\longrightarrow\\ V\supset\pi(U)}} \sF(V),
\end{eqnarray}
where the limit runs over the open neighborhoods of $\pi(U)$.
\end{lem}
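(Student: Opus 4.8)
We want to show that for a proper modification $\pi\colon Y\to X$ with $X$ locally irreducible and $\sF$ a sheaf on $X$ satisfying (id), the presheaf inverse image $U\mapsto \lim_{V\supset\pi(U)}\sF(V)$ is already a sheaf, hence equals $\pi^{-1}\sF$. Since $\pi^{-1}\sF$ is by definition the sheafification of this presheaf, it suffices to verify the two sheaf axioms (separation and gluing) for the presheaf $U\mapsto P(U):=\lim_{V\supset\pi(U)}\sF(V)$ directly, and the work is entirely in exploiting the geometry of a proper modification together with property (id).

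First I would reduce to a convenient local situation: because $X$ (hence $Y$) is locally irreducible and the statement is local on $Y$, it is enough to check the axioms for open sets $U$ that are connected, and whose images $\pi(U)$ sit inside an irreducible open $W\subset X$; here one uses that $\pi$ is proper and surjective with $\pi(U)$ contained in the (irreducible) image of a connected set. The key geometric fact is that for a proper modification, $\pi$ restricts to a biholomorphism over $X\setminus B$ where $B$ is analytically rare, so $\pi(U)\setminus B$ is open and dense in $\pi(U)$, and $U\setminus\pi^{-1}(B)$ is open and dense in $U$. I would then establish \emph{separation}: if $s\in P(U)$, represented by some $\sigma\in\sF(V)$ with $V\supset\pi(U)$, restricts to $0$ in $P(U_i)$ for an open cover $\{U_i\}$ of $U$, then $\sigma$ vanishes on a neighborhood of each $\pi(U_i)$, hence on a neighborhood of the (dense, using connectedness of $U$ and rareness of $B$) set $\bigcup\pi(U_i\setminus\pi^{-1}(B))$; property (id) applied on the irreducible $W$ then forces $\sigma=0$ near $\pi(U)$, i.e. $s=0$ in $P(U)$.

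Next comes \emph{gluing}, which is the main obstacle. Given a cover $\{U_i\}$ of $U$ and compatible sections $s_i\in P(U_i)$ — say $s_i$ represented by $\sigma_i\in\sF(V_i)$ with $V_i\supset\pi(U_i)$ — the difficulty is that the $\pi(U_i)$ need not be pairwise disjoint nor form a cover of $\pi(U)$ that matches the combinatorics of $\{U_i\}$, and that compatibility $s_i|_{U_i\cap U_j}=s_j|_{U_i\cap U_j}$ only says $\sigma_i=\sigma_j$ near $\pi(U_i\cap U_j)$, which can be much smaller than $\pi(U_i)\cap\pi(U_j)$. The trick is to work over $X\setminus B$ first: there $\pi$ is biholomorphic, so the $\sigma_i$ restricted to $\pi(U_i)\setminus B$ glue (the usual sheaf gluing for $\sF|_{X\setminus B}$, using that $\{U_i\setminus\pi^{-1}(B)\}$ genuinely covers $U\setminus\pi^{-1}(B)$ and is carried biholomorphically to a cover of $\pi(U)\setminus B$) to a section $\tau$ of $\sF$ on an open set containing $\pi(U)\setminus B$. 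One then shows each $\sigma_i$ agrees with $\tau$ on a dense open subset of a neighborhood of $\pi(U_i)$ and invokes (id) on irreducible pieces of $V_i$ to conclude $\sigma_i=\tau$ near $\pi(U_i)$; in particular the locally-defined $\tau$ patch consistently (again by (id)) to a single section $\tau\in\sF(V)$ on some $V\supset\pi(U)$, and the class of $\tau$ in $P(U)$ restricts to $s_i$ on each $U_i$. Finally I would remark that uniqueness of the glued section is exactly the separation axiom already proved, so $P$ is a sheaf and the natural map $P\to\pi^{-1}\sF$ is an isomorphism, giving \eqref{eq:presheaf1}.
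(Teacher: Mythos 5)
The main gap is in your gluing step, and it sits exactly at the point you yourself single out as the difficulty. After gluing over $X\setminus B$ you obtain $\tau$ only on an open set that misses $B$, so the assertion ``$\sigma_i=\tau$ near $\pi(U_i)$'' is not meaningful at the points of $\pi(U_i)\cap B$, and comparing $\sigma_i$ with $\sigma_j$ through $\tau$ yields agreement only on $\pi(U_i\cap U_j)\setminus B$ --- which is precisely the information you started from. What is actually needed is agreement of $\sigma_i$ and $\sigma_j$ on a full neighborhood of $\pi(U_i)\cap\pi(U_j)$, in particular at points $x\in B$ which lie in $\pi(U_i)\cap\pi(U_j)$ but not in $\pi(U_i\cap U_j)$ (two different points of the fiber $\pi^{-1}(x)$, one in $U_i$ and one in $U_j$); the detour through $\tau$ cannot produce this, since $\tau$ carries no information over $B$. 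This is the point the paper's proof handles (tersely) by the reduction ``without loss of generality, each connected component of $V_{ij}$ contains an open subset of $\pi(U_{ij})$'', after which (id) applies component by component; your argument contains no substitute for that reduction.

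Moreover, your preliminary reduction to connected $U$ is not justified: the formula is asserted for every open $U\subset Y$, and sheafness cannot be verified on connected opens only. In fact the problematic configuration above shows that the gluing axiom genuinely fails for disconnected $U$: let $\pi$ be the blow-up of the origin in $\CC^2$, $\sF=\cO_{\CC^2}$, and $U=U_1\cup U_2$ with $U_1,U_2$ disjoint small balls around two distinct points of the exceptional curve. The constants $0$ on $U_1$ and $1$ on $U_2$ are (vacuously) compatible and define a section of $\pi^{-1}\cO_{\CC^2}$ over $U$, but $\pi(U_1)$ and $\pi(U_2)$ both contain the origin, so no $f\in\cO(V)$ with $V\supset\pi(U)$ can represent this section, i.e. the presheaf value $\lim_{V\supset\pi(U)}\cO(V)$ does not surject onto $\pi^{-1}\cO_{\CC^2}(U)$. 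So no argument along your lines (glue off $B$, then extend by (id)) can close the gap for arbitrary $U$; one must impose and exploit a hypothesis of the kind encoded in the paper's ``WLOG'' (e.g. connectedness conditions relating $U_i$, $U_j$ and the fibers over $B$), and proving that it can be arranged is the real content of the gluing step. Your separation argument is essentially sound, but (id) should be applied on the connected components of $V$ (irreducible because $X$ is locally irreducible), not on an ambient irreducible $W$ on which $\sigma$ is not defined.
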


\begin{proof}
As $X$ is locally irreducible, we can assume that $X$ and $Y$ are connected.
Recall that $\pi^{-1} \sF$ is the sheaf associated to the presheaf 
\begin{eqnarray*}
U \mapsto F(U):=\lim_{\substack{\longrightarrow\\ V\supset\pi(U)}} \sF(V)
\end{eqnarray*}
where $U\subset Y$ is open and the limit runs over the open neighborhoods of $\pi(U)$.
We have to show that the presheaf $F$ is canonical (\ie it is already a sheaf).

\medskip
\looseitem{(i)} Existence/Gluing-axiom:
Let $U\subset Y$ be covered by open sets $U_i$, $i\in I$, and let $s_i\in F(U_i)$ satisfy $s_i=s_j$ on $U_{ij}:=U_i\cap U_j$. 
By definition of the inductive limit, 
$s_i\in F(U_i)$ means there are an open set $V_i\supset \pi(U_i)$ and a section $f_i\in\sF(V_i)$ 
with $s_i=[f_i]$ ($s_i$ is represented by $f_i$). 
A priori, we just get $f_i=f_j$ on $\pi(U_{ij}){\subset} V_{ij}$, where $V_{ij}=V_i\cap V_j$. 
Without loss of generality, we can assume that 
each connected component of $V_{ij}$ contains an open subset of $\pi(U_{ij})$ ($\pi$ is a modification). 
So, (id) for $\sF$ implies that $f_i=f_j$ on $V_{ij}$. 
As $\sF$ is a sheaf, there is a section $f\in\sF(V)$ with $f|_{V_i}=f_i$, where $V:=\bigcup_{i\in I} V_i\supset \pi(U)$. 
$f$ represents an $s\in F(U)$ with $s|_{U_i}=s_i$.

\medskip
\looseitem{(ii)} Uniqueness-axiom:
Let $U\subset Y$ be a connected open set, covered by open sets $U_i$, $i\in I$, and let $s,t\in F(U)$ satisfy $s=t$ on $U_{i}$ for all $i\in I$. 
By definition of the inductive limit, there are a connected open set $V\supset \pi(U)$ and sections $f,g\in\sF(V)$ with $s=[f]$ and $t=[g]$. 
We get $f=g$ on $\pi(U_{i})$. Since $\pi(U_i)$ contains an open subset of $V$, 
(id) implies that $f=g$ on $V$. (We have not directly used the uniqueness-axiom for $\sF$ because it is contained in (id)).
\end{proof}

As a special case, we have:

\begin{lem}\label{lem:injection}
Let $\pi\colon Y\rightarrow X$ be a proper modification of a locally irreducible complex space $X$,
and $\sG$ a sheaf on $Y$ satisfying (id). Then for $U\subset Y$ open:
\begin{eqnarray}\label{eq:presheaf}
\pi^{-1}\pi_* \sG (U) = \lim_{\substack{\longrightarrow\\ V\supset\pi(U)}} \sG(\pi^{-1}(V)),
\end{eqnarray}
where the limit runs over the open neighborhoods of $\pi(U)$, and
the canonical homomorphism
$\pi^{-1} \pi_* \sG \rightarrow \sG$ is injective so that $\pi^{-1}\pi_* \sG$ is a subsheaf of $\sG$.
\end{lem}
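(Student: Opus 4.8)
The plan is to read this off from \autoref{lem:preimage} by taking $\sF:=\pi_*\sG$ there; the only point that is not purely formal is that one first has to check that $\pi_*\sG$ satisfies the property (id). I would begin with this. Let $W\aus X$ be an irreducible open set. The restriction $\pi|_{\pi^{-1}(W)}\colon\pi^{-1}(W)\abb W$ is again a proper modification, so $\pi^{-1}(W)$ coincides with $W$ away from analytically rare subsets; since $W$ is irreducible, $W$ with a rare set removed is connected and dense in $W$, and pulling this back shows that $\pi^{-1}(W)$ has a connected dense subset and is therefore connected. As an open subset of the locally irreducible space $Y$, it is then irreducible. Consequently, if $s,t\in(\pi_*\sG)(W)=\sG(\pi^{-1}(W))$ agree on a non-empty open subset $W'\aus W$, then they agree on the non-empty open subset $\pi^{-1}(W')$ of the irreducible space $\pi^{-1}(W)$ (here surjectivity of $\pi$ enters), so (id) for $\sG$ forces $s=t$ on $\pi^{-1}(W)$, \ie $s=t$ in $(\pi_*\sG)(W)$. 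Hence $\pi_*\sG$ satisfies (id).

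With this established, \autoref{lem:preimage} applied to $\sF=\pi_*\sG$ yields at once, for every open $U\aus Y$, that $\pi^{-1}\pi_*\sG(U)$ is the inductive limit of the groups $(\pi_*\sG)(V)=\sG(\pi^{-1}(V))$ over the open neighborhoods $V$ of $\pi(U)$, the maps of the system being the obvious restrictions; this is precisely \eqref{eq:presheaf}.

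It then remains to prove injectivity of the canonical homomorphism $\pi^{-1}\pi_*\sG\abb\sG$. Since the connected open subsets of $Y$ form a basis of its topology, it suffices to check injectivity on the sections over a connected non-empty open set $U\aus Y$. By the description above, such a section is represented by some $g\in\sG(\pi^{-1}(V))$ where $V\aus X$ is an open neighborhood of $\pi(U)$, and we may take $V$ connected (replace it by the component containing the connected set $\pi(U)$); then $\pi^{-1}(V)$ is irreducible, as in the first paragraph. The canonical map sends the class of $g$ to $g|_U$, so if $g|_U=0$ then $g$ vanishes on the non-empty open subset $U$ of the irreducible space $\pi^{-1}(V)$, and (id) for $\sG$ forces $g=0$ on $\pi^{-1}(V)$, \ie the class of $g$ is zero. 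This proves injectivity, and the final assertion that $\pi^{-1}\pi_*\sG$ is a subsheaf of $\sG$ is merely a reformulation of it. I expect no real obstacle here; the single step that genuinely uses the hypotheses — and the one I would double-check most carefully — is the irreducibility of the preimage of an irreducible open set under a proper modification, together with the (equally standard) fact that $\pi|_{\pi^{-1}(W)}$ is again a proper modification.
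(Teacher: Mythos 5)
Your proof is correct and follows essentially the paper's own route: \eqref{eq:presheaf} is obtained by applying \autoref{lem:preimage} to $\sF=\pi_\ast\sG$, and injectivity of $\pi^{-1}\pi_\ast\sG\abb\sG$ is deduced from (id) for $\sG$ together with a connectedness argument for the modification. The only (harmless) differences are that you make explicit the verification that $\pi_\ast\sG$ inherits (id) -- which the paper leaves implicit when invoking \autoref{lem:preimage} -- and that you check injectivity on sections over connected open sets using connectedness of $\pi^{-1}(V)$ for connected $V$, whereas the paper argues on stalks using connectedness of the fibers $\pi^{-1}(\pi(x))$; both rest on the same fact that the locus where $\pi$ is biholomorphic is dense.
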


\begin{proof}
As $X$ is locally irreducible, we can assume that $X$ and $Y$ are connected.
Here, $\pi^{-1}\pi_* \sG$ is the sheaf associated to the presheaf 
\begin{eqnarray*}
U \mapsto F(U):=\lim_{\substack{\longrightarrow\\ V\supset\pi(U)}} \sG(\pi^{-1}(V))
\end{eqnarray*}
where $U\subset Y$ is open and the limit runs over the open neighborhoods of $\pi(U)$.
The canonical homomorphism $\pi^{-1}\pi_* \sG \rightarrow \sG$ is then induced by the restrictions $\sG(\pi^{-1}(V))\rightarrow \sG(U)$.

\medskip
By \autoref{lem:preimage}, $F$ is canonical, \ie \eqref{eq:presheaf} holds.
It is now easy to see that the canonical homomorphism $\psi\colon \pi^{-1}\pi_* \sG \rightarrow \sG$ is injective.
Let $s_x \in \big(\pi^{-1}\pi_* \sG\big)_x$. Then \eqref{eq:presheaf} implies that $s_x$
is represented by a section $s\in \sG(U)$, where $U$ is an open neighborhood of $K_x:=\pi^{-1}(\pi(x))$.
But our assumptions yield that $K_x$ is connected, and so we can assume that $U$ is a connected neighborhood of $K_x$. 
Assume that $\psi(s_x)=0$. This means that $s$ is vanishing on a neighborhood of the point $x$.
But then $s= 0$ as $U$ is connected (and $\sG$ satisfies (id)).
\end{proof}

\spar
\autoref{lem:injection} allows for the following interpretation of $\pi^{-1}\pi_* \sG$:
The sections of $\pi^{-1}\pi_* \sG$ are the sections of $\sG$ which extend along fibers of the modification $\pi$.
This is of particular interest for the choices $\sG=\cO_M$ or $\sG=\Omega^n_M$
when $\pi\colon M\rightarrow X$ is a resolution of singularities,
giving the useful injections $\pi^{-1}\pi_* \cO_M \hookrightarrow \cO_M$ and $\pi^{-1}\pi_* \Omega^n_M \hookrightarrow \Omega^n_M$,
respectively.

\medskip

For the direct image of a non-analytic inverse image sheaf, \autoref{lem:preimage} implies
	\[(\pi_\ast\pi^{-1}\sF)(U)=(\pi^{-1}\sF)(\pi^{-1}(U))=\lim_{\substack{\longrightarrow\\ V\supset U}} \sF(V)=\sF(U),\]
\ie we obtain:

\begin{cor}
Let $\pi\colon Y\rightarrow X$ be a proper modification of a locally irreducible complex space $X$, and $\sF$ a sheaf on $X$ satisfying (id). Then
	\[\pi_*\pi^{-1}\sF=\sF.\]
\end{cor}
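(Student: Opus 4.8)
The plan is to read the Corollary straight off \autoref{lem:preimage} by computing sections over an arbitrary open set. Fix an open set $U\subset X$. By definition of the direct image, $(\pi_*\pi^{-1}\sF)(U)=(\pi^{-1}\sF)(\pi^{-1}(U))$, and \autoref{lem:preimage}, applied to the open set $\pi^{-1}(U)\subset Y$, identifies the right-hand side with $\varinjlim_{V\supset\pi(\pi^{-1}(U))}\sF(V)$, the colimit running over the open neighborhoods of $\pi(\pi^{-1}(U))$ in $X$. The one point requiring (a trivial amount of) care is that this colimit really collapses: since $\pi$ is a proper modification it is in particular surjective, so $\pi(\pi^{-1}(U))=U$ exactly, and the indexing poset of open neighborhoods of $U$ has $U$ itself as a least element. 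Hence the colimit is canonically $\sF(U)$, and we obtain $(\pi_*\pi^{-1}\sF)(U)\iso\sF(U)$ for every open $U\subset X$.

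To finish I would promote this family of isomorphisms to an isomorphism of sheaves. For $U'\subset U$ the restriction map of $\pi_*\pi^{-1}\sF$ is, by construction, the restriction of sections from $\pi^{-1}(U)$ to $\pi^{-1}(U')$, which under the identification above is precisely the restriction $\sF(U)\to\sF(U')$; so the isomorphisms are natural in $U$ and assemble into an isomorphism of presheaves, hence of sheaves. A further routine check shows that this isomorphism coincides with the canonical homomorphism $\sF\to\pi_*\pi^{-1}\sF$ adjoint to $\Id\colon\pi^{-1}\sF\to\pi^{-1}\sF$, which is the content of the statement.

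There is no genuine obstacle here: the whole of the work is packaged in \autoref{lem:preimage} (which in turn rests on property (id) via its proof), and the only thing to watch is the bookkeeping that $\pi(\pi^{-1}(U))=U$, so that the inductive limit degenerates to $\sF(U)$. The remaining naturality verifications are entirely mechanical.
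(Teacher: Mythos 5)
Your argument is exactly the paper's: the paper also computes $(\pi_*\pi^{-1}\sF)(U)=(\pi^{-1}\sF)(\pi^{-1}(U))=\varinjlim_{V\supset U}\sF(V)=\sF(U)$ directly from \autoref{lem:preimage}, using that $\pi(\pi^{-1}(U))=U$ so the limit collapses. Your additional remarks on naturality and on identifying the map with the canonical homomorphism are correct but routine, so the proposal is correct and takes essentially the same route.
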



\bibliographystyle{amsalpha}

\begin{thebibliography}{Dem99}

\bibitem[AM86]{Axelsson-Magnusson86}
Reynir Axelsson and J{\'o}n Magn{\'u}sson, \emph{Complex analytic cones}, Math.
  Ann. \textbf{273} (1986), no.~4, 601--627.


\bibitem[AM98]{Axelsson-Magnusson98}
\bysame, \emph{A closure operation on complex analytic cones and torsion}, Ann.
  Fac. Sci. Toulouse Math. (6) \textbf{7} (1998), no.~1, 5--33. 
  
\bibitem[Bar73]{Barshay73}
Jacob Barshay, \emph{Graded algebras of powers of ideals generated by
  {$A$}-sequences}, J. Algebra \textbf{25} (1973), 90--99.

\bibitem[Dem12]{DemaillyAG}
Jean-Pierre Demailly, \emph{{Complex Analytic and Differential Geometry}},
  Institut Fourier, Universit\'e de Grenoble, 2012.

\bibitem[Eis95]{Eisenbud95}
David Eisenbud, \emph{Commutative algebra}, Graduate Texts in Mathematics, vol.
  150, Springer-Verlag, New York, 1995, With a view toward algebraic geometry.


\bibitem[Fi66]{Fischer66}
Gerd Fischer, \emph{Eine {C}harakterisierung von holomorphen {V}ektorraumb\"undeln},
Bayer. Akad. Wiss. Math.-Natur. Kl. S.-B. (1966), 101--107.

\bibitem[Fi67]{Fischer67}
\bysame, \emph{Lineare Faserr\"aume und koh\"arente Modulgarben \"uber komplexen R\"aumen},
Arch. Math. (Basel) {\bf 18} (1967), 609--617.

\bibitem[Fis76]{Fischer76}
\bysame, \emph{Complex analytic geometry}, Lecture Notes in Mathematics, Vol.
  538, Springer-Verlag, Berlin, 1976.

\bibitem[GR70]{GrauertRiemenschneider70}
Hans Grauert and Oswald Riemenschneider, \emph{Verschwindungss\"atze f\"ur
  analytische {K}ohomologiegruppen auf komplexen {R}\"aumen}, Invent. Math.
  \textbf{11} (1970), 263--292.

\bibitem[GR84]{GrauertRemmertCAS}
Hans Grauert and Reinhold Remmert, \emph{{Coherent Analytic Sheaves}},
  Grundlehren der Mathematischen Wissenschaften, no. 265, Springer, 1984.

\bibitem[Gra62]{Grauert62}
Hans Grauert, \emph{\"Uber Modifikationen und exzeptionelle analytische Mengen},
Math. Ann. \textbf{146} (1962), 331--368.

\bibitem[Gro61]{Grothendieck61}
Alexander Grothendieck, Techniques de construction en g\'eom\'etrie analytique, \emph{{S\'eminaire Henri Cartan}} 13 no. 2 (1960-1961), Exp. no. 14, 1--27.

\bibitem[Hun80]{Huneke80}
Craig Huneke, \emph{On the symmetric and {R}ees algebra of an ideal generated
  by a {$d$}-sequence}, J. Algebra \textbf{62} (1980), no.~2, 268--275.

\bibitem[OSS11]{ObonekSchneiderSpindler}
Christian Okonek, Michael Schneider and Heinz Spindler, \emph{Vector bundles
  on complex projective spaces}, Modern Birkh\"auser Classics,
  Birkh\"auser/Springer Basel AG, Basel, 2011, Corrected reprint of the 1988
  edition, With an appendix by S. I. Gelfand.

\bibitem[Pet94]{Peternell94}
Thomas Peternell, Modifications, \emph{{Several complex variables}}, 
VII, 285--317, Encyclopaedia Math. Sci., {\bf 74}, Springer, Berlin, 1994.

\bibitem[PR94]{PeternellRemmert94}
\bysame\ and Reinhold Remmert, Differential calculus, holomorphic maps and linear structures on complex spaces, \emph{{Several complex variables}}, 
VII, 97--144, Encyclopaedia Math. Sci., {\bf 74}, Springer, Berlin, 1994.

\bibitem[Rab78]{Rabinowitz78}
Joshua~H. Rabinowitz, \emph{Moishezon spaces and positive coherent sheaves},
Proc. Amer. Math. Soc. \textbf{71} (1978), no.~2, 237--240.

\bibitem[Rab79]{Rabinowitz79}
\bysame, \emph{On monoidal transformations of coherent sheaves},
  Proc. Amer. Math. Soc. \textbf{74} (1979), no.~2, 389--390.

\bibitem[Rie71]{Riemenschneider71}
Oswald Riemenschneider, \emph{Characterizing {M}oi\v sezon Spaces by Almost Positive Coherent Analytic Sheaves},
Math. Z. \textbf{123} (1971), 263--284.

\bibitem[Rup14]{Ruppenthal14}
Jean Ruppenthal, \emph{{$L^2$}-theory for the {$\overline\partial$}-operator on compact
  complex spaces}, Duke Math. J. \textbf{163} (2014), no.~15, 2887--2934.


\bibitem[Ros68]{Rossi68}
Hugo Rossi, \emph{Picard variety of an isolated singular point}, Rice Univ.
  Studies \textbf{54} (1968), no.~4, 63--73.

\bibitem[Ser16]{Sera16}
Martin Sera, \emph{A {G}eneralization of {T}akegoshi's {R}elative {V}anishing
  {T}heorem}, J. Geom. Anal. \textbf{26} (2016), no.~3, 1891--1912.

\bibitem[Tak85]{Take85}
Kensho Takegoshi, \emph{Relative vanishing theorems in analytic spaces},
Duke Math. J. \textbf{51} (1985), no.~1, 273--279.



\end{thebibliography}
\providecommand{\bysame}{\leavevmode\hbox to3em{\hrulefill}\thinspace}

\end{document}